\newcommand{\supp}{{\textrm supp \ }}
\DeclareMathOperator{\supess}{esssup}
\author[B. Selmi]{Bilel Selmi}  
\address{\newline Analysis, Probability $\&$ Fractals Laboratory LR18ES17, Department of Mathematics, Faculty of
Sciences of Monastir, University of Monastir, 5000-Monastir,
Tunisia}
\email{ bilel.selmi@fsm.rnu.tn} \email{ bilel.selmi@isetgb.rnu.tn}
\subjclass[2000]{ 28A78, 28A80.} \keywords{ Multifractal analysis,
Multifractal formalism, Hewitt-Stromberg measures, Hausdorff
dimension, Packing dimension.}
\newcommand{\R}{\mathbb R}
\newcommand{\N}{\mathbb N}
\newtheorem{theorem}{Theorem}
\newtheorem{lemma}{Lemma}
\newtheorem{proposition}{Proposition}
\newtheorem{definition}{Definition}
\newtheorem{remark}{Remark}
\numberwithin{equation}{section}
\title[A review on multifractal analysis of Hewitt-Stromberg measures]{A review on multifractal analysis of Hewitt-Stromberg measures}
\begin{document}
 \maketitle
\begin{abstract}
We estimate the upper and lower bounds of the Hewitt-Stromberg
dimensions. In particular, these results give new proofs of theorems
on the multifractal formalism which is based on the Hewitt-Stromberg
measures and yield results even at points $q$ for which the upper
and lower multifractal Hewitt-Stromberg dimension functions differ.
Finally, concrete examples of a measure satisfying the above
property are developed.
\end{abstract}

\maketitle

                                                                  \section{Introduction}
The multifractal analysis is a natural framework to finely describe
geometrically the heterogeneity in the distribution at small scales
of the measures on a metric space. The multifractal formalism aims
at expressing the dimension of the level sets in terms of the
Legendre transform of some {\it free energy} function in analogy
with the usual thermodynamic theory.  One says that $\mu$ satisfies
the multifractal formalism if the Legendre transform of this free
energy yields the Hausdorff dimension of the level set of the local
H\"{o}lder exponent of $\mu$. For the measures we consider in this
article, the thermodynamic limit does not exist: the free energy
splits into two functions given by the upper and lower limits, and
the Legendre transforms of both of these functions have an
interpretation in terms of dimensions of the sets of
iso-singularities. However in the standard formalism discontinuities
of the free energy or one of its derivatives correspond to phase
transitions, we are facing a new phenomenon. It would be of interest
to know whether physical systems exhibiting such behavior exist.

The motivations of this paper come from several sources. The authors
in \cite{BBH} constructed "{\it bad}" measures whose Olsen's
multifractal functions $b_\mu$ and $B_\mu$ coincide at one or two
points only. These measures can fulfill the classical multifractal
formalism at one or two points only, i.e., the classical
multifractal formalism does not hold. Ben Nasr et al. in \cite{BBH}
give two constructions: The first one provides $b_\mu$ and $B_\mu$
functions with Lipschitz regularity. The second one provides real
analytic functions, but in this case, the support of the measure is
a Cantor set of Hausdorff dimensions less than $1$ (they use
inhomogeneous Bernoulli measures). Later, Ben Nasr et al. \cite{BJ}
and Shen \cite{SH1}, revisited the first example in \cite[Section
2.3]{BBH}, the meaning no interpretation of Olsen's multifractal
function was given in terms of dimensions. By the way, it was proven
in \cite{BJ} that, for some range of $\alpha$, the Hausdorff
dimension of the local H\"{o}lder exponent is given by the value of
the Legendre transform of $b_\mu$ at $\alpha$ and their packing
dimension is the value of the Legendre transform of $B_\mu$ at
$\alpha$ (This is the idea that we refine to get our results for the
multifractal formalism for the Hewitt-Stromberg measure). In
\cite{SH1},  Shen studies the main results of  \cite{BJ} for which
the function $b_\mu$ and $B_\mu$ can be real analytic functions. Let
us mention also that the authors in \cite{HLW, W3, W4, YZ} extended
these results to some Moran measures associated with homogeneous
Moran fractals. The above results were later generalized in
\cite{LiSe, SBSB33} to the relative multifractal formalism, and in
\cite{Samti} to inhomogeneous multinomial measures constructed on
the product symbolic space.

Motivated by the above papers, the authors in \cite{NB1, NB2, BD22,
SB, SB4} introduced and studied a new multifractal formalism based
on the Hewitt-Stromberg measures. We point out that this formalism
is completely parallel to Olsen's multifractal formalism introduced
in \cite{Ol1} which is based on the Hausdorff and packing measures.
In fact, the two most important (and well-known) measures in fractal
geometry are the Hausdorff measure and the packing measure. However,
in 1965, Hewitt and Stromberg introduced a further {\it fractal}
measure in their classical textbook \cite[Exercise (10.51)]{HeSt}.
Since then, these measures have  been investigated by several
authors, highlighting their importance in the study of local
properties of fractals and products of fractals. One can cite, for
example \cite{NB3, BD22, Ha1, Ha2, Ha3, Ha4, Ha5, JuMaMiOlSt, Olll2,
SB4, Zi}. In particular, Edgar's textbook \cite[pp. 32-36]{Ed}
provides an excellent and systematic introduction to these measures.
Such measures appears also explicitly, for example, in Pesin's
monograph \cite[5.3]{Pe} and implicitly in Mattila's text
\cite{Mat}. One of the purposes of this paper is to define and study
a class of natural multifractal analogue of the Hewitt-Stromberg
measures. While Hausdorff and packing measures are defined using
coverings and packings by families of sets with diameters less than
a given positive number $\delta$, say, the Hewitt-Stromberg measures
are defined using packings of balls with a fixed diameter $\delta$.

In the present paper we estimate the upper and lower bounds of the
Hewitt-Stromberg dimensions of a subset $E$ of $\mathbb{R}^n$. We
apply the main results to give new (and in several cases simpler)
proofs of theorems on the multifractal formalism which is based on
the Hewitt-Stromberg measures, that they yield results even at
points $q$ for which the the upper and lower multifractal
Hewitt-Stromberg dimension functions differ. We also give some
examples of a measure for which the multifractal functions are
different and for which the lower and upper multifractal
Hewitt-Stromberg functions are different and the lower and upper
Hewitt-Stromberg dimensions of the level sets of the local
H\"{o}lder exponent are given by the Legendre transform respectively
of lower and upper multifractal Hewitt-Stromberg dimension
functions.

The paper is structured as follows. In Section \ref{Section1} we
recall the definitions  of the various fractal and multifractal
dimensions and measures investigated in the paper. The definitions
of the Hausdorff and packing measures and the Hausdorff and packing
dimensions are recalled in Section \ref{sec2.1}, and the definitions
of the Hewitt-Stromberg measures are recalled in Section
\ref{sec2.2}, while the definitions of the Hausdorff and packing
measures are well-known, we have, nevertheless, decided to include
these-there are two main reasons for this: firstly, to make it
easier for the reader to compare and contrast the Hausdorff and
packing measures with the less well-known Hewitt-Stromberg measures,
and secondly, to provide a motivation for the Hewitt-Stromberg
measures. In Section \ref{sec2.4} we recall the definitions of the
multifractal Hewitt-Stromberg measures and separator functions, and
study their properties. In particular, this section recalls earlier
results on the values of the multifractal formalism based on
Hewitt-Stromberg measures developed in \cite{NB1, NB2, BD22}; this
discussion is included in order to motivate our main results
presented in Section \ref{sec3}. Section \ref{Estimates} gives some
estimates of the upper and lower bounds of the Hewitt-Stromberg
dimensions. In Sections \ref{Proofs}-\ref{differ} we apply the
results from Section \ref{Estimates} to give simple proofs of
theorems on the multifractal formalism developed in \cite{NB2} and
yield a result even at points $q$ for which the multifractal
Hewitt-Stromberg dimension functions differ. Finally, Section
\ref{sec4} contains concrete examples related to these concepts.

                                                               \section{Preliminaries}\label{Section1}

\subsection{Hausdorff measure, packing measure and dimensions}\label{sec2.1} While
the definitions of the Hausdorff and packing measures and the
Hausdorff and packing dimensions are well-known, we have,
nevertheless, decided to briefly recall the definitions below. There
are several reasons for this: firstly, since we are working in
general metric spaces, the different definitions that appear in the
literature may not all agree and for this reason it is useful to
state precisely the definitions that we are using; secondly, and
perhaps more importantly, the less well-known Hewitt-Stromberg
measures (see Section \ref{sec2.2}) play an important part in this
paper and to make it easier for the reader to compare and contrast
the definitions of the Hewitt-Stromberg measures and the definitions
of the Hausdorff and packing measures it is useful to recall the
definitions of the latter measures; and thirdly, in order to provide
a motivation for the Hewitt-Stromberg measures. Let $(X,d)$ be a
separable metric space, $E \subseteq X$ and $t>0$. Throughout this
paper, ${B}(x, r)$ stands for the open ball  $${B}(x, r)=\big\{y
\in{X} \big|\;\; \mathrm{d}(x, y)<r\big\}.$$

The Hausdorff measure is defined, for $\delta>0$, as follows
$$
\mathscr{H}_\delta^t(E)= \inf\left\{\sum_i
\Big(\text{diam}(E_i)\Big)^t\Big|\;\; E\subseteq\bigcup_i
E_i,\;\;\text{diam}(E_i)<\delta \right\}.
$$
This allows to define first the $t$-dimensional Hausdorff measure
$\mathscr{H}^t(E)$ of $E$ by
$$
\mathscr{H}^t(E)=\sup_{\delta>0}\mathscr{H}_\delta^t(E).
$$
Finally, the Hausdorff dimension $\dim_H(E)$ is defined by
$$
\dim_H(E)=\sup\Big\{t\geq0\;\big|\; \mathscr{H}^t(E)=+\infty\Big\}.
$$

The packing measure  is defined, for $\delta> 0$, as follows
\begin{eqnarray*}
\overline{\mathscr{P}}_\delta^t(E)= \sup\left\{\sum_i
\Big(2r_i\Big)^t\right\},
\end{eqnarray*}
where the supremum is taken over all open balls $\Big( B_i=B(x_i,
r_i) \Big)_i \; \text{such that}\; r_i \leq \delta\; \text{and
with}\; x_i \in E \; \text{and}\;B_i\cap B_j=\emptyset\; \text{for
all}\; i\neq j$. The $t$-dimensional packing pre-measure
$\overline{\mathscr{P}}^t(E)$ of $E$ is now defined by
$$
\overline{\mathscr{P}}^t(E)=\sup_{\delta>0}\overline{\mathscr{P}}_\delta^t(E).
$$
This makes us able to define the $t$-dimensional packing measure
${\mathscr{P}}^t(E)$ of $E$ as
$$
{\mathscr{P}}^t(E)=\inf\left\{\sum_i\overline{\mathscr{P}}^t(E_i)\;\Big|\;\;E\subseteq\bigcup_i
E_i\right\},
$$
and the packing dimension $\dim_P(E)$ is defined by
$$
\dim_P(E)=\sup\Big\{t\geq0\;\big|\; \mathscr{P}^t(E)=+\infty\Big\}.
$$
The reader is referred to Falconer's book  \cite{ FKJ} for an
excellent discussion of the Hausdorff measure and the packing
measure.

\subsection{Hewitt-Stromberg measures and dimensions}\label{sec2.2}
In this section, we recall the definitions of the Hewitt- Stromberg
measures, while the definitions of the Hausdorff and packing
measures are well-known, we have, nevertheless, decided to include
these-there are two main reasons for this: firstly, to make it
easier for the reader to compare and contrast the Hausdorff and
packing measures with the less well-known Hewitt-Stromberg measures,
and secondly, to provide a motivation for the Hewitt-Stromberg
measures. Let $X$ be a separable metric space and $E \subseteq X$.
For $t>0$, the Hewitt-Stromberg pre-measures are defined as follows,
$$
{\mathsf L}^t(E)=\liminf_{r\to0} N_r(E) \;(2r)^t\quad\text{and}\quad
\overline{\mathsf{H}}^t(E)=\sup_{F\subseteq E}{\mathsf L}^t(F),
$$
and
$$
\overline{\mathsf{P}}^t(E)=\limsup_{r\to0} M_r(E) \;(2r)^t,
$$
where the  covering number $N_r(E)$ of $E$ and the packing number
$M_r(E)$ of $E$ are given by
$$
N_r(E)=\inf\left\{\sharp\{I\}\;\Big|\; \Big( B(x_i, r) \Big)_{i\in
I} \; \text{is a family of open balls with}\; x_i \in E \;
\text{and}\; E\subseteq \bigcup _i B(x_i, r)\right\}
$$
and
$$
M_r(E)=\sup\left\{\sharp\{I\}\; \Big|\; \Big(B_i= B(x_i, r)
\Big)_{i\in I} \; \text{is a family of open balls with}\; x_i \in E
\; \text{and}\; B_i\cap B_j=\emptyset\; \text{for}\; i\neq
j\right\}.
$$

Now, we define the lower and upper $t$-dimensional Hewitt-Stromberg
measures,  which we denote respectively by ${\mathsf H}^t(E)$ and
${\mathsf P}^t(E)$,
 as follows
$$
{\mathsf{H}}^t(E)=\inf\left\{\sum_i\overline{\mathsf{H}}^t(E_i)\;\Big|\;\;E\subseteq\bigcup_i
E_i\right\}
$$
and
$$
{\mathsf{P}}^t(E)=\inf\left\{\sum_i\overline{\mathsf{P}}^t(E_i)\;\Big|\;\;E\subseteq\bigcup_i
E_i\right\}.
$$

We recall some basic inequalities satisfied by the Hewitt-Stromberg,
the Hausdorff  and the packing measure
$$
\overline{\mathsf{H}}^t(E)\leq \xi\overline{\mathsf{P}}^t(E)\leq \xi
\overline{\mathscr{P}}^t(E)
$$
and
$$
{\mathscr{H}}^t(E)\leq{\mathsf{H}}^t(E)\leq\xi {\mathsf{P}}^t(E)\leq
\xi {\mathscr{P}}^t(E),
$$
where $\xi$ is the constant that appears in Besicovitch's covering
theorem.\\
The reader is referred to Edgar's book \cite[pp.~32]{Ed} (see also
\cite[Proposition 2.1]{JuMaMiOlSt, Olll2}) for a systematic
introduction to the Hewitt-Stromberg measures.

\bigskip
The lower and upper Hewitt-Stromberg dimension
$\underline{\dim}_{MB}(E)$ and  $\overline{\dim}_{MB}(E)$ are
defined by
$$
\underline{\dim}_{MB}(E)=\inf\Big\{t\geq0\;\Big|\;
{\mathsf{H}}^t(E)=0\Big\}=\sup\Big\{t\geq0\;\Big|\;
{\mathsf{H}}^t(E)=+\infty\Big\}
$$
and
$$
\overline{\dim}_{MB}(E)=\inf\Big\{t\geq0\;\big|\;
{\mathsf{P}}^t(E)=0\Big\}=\sup\Big\{t\geq0\;\big|\;
{\mathsf{P}}^t(E)=+\infty\Big\}.
$$
The lower and upper box dimensions, denoted by
$\underline{\dim}_{B}(E)$ and $\overline{\dim}_{B}(E)$,
respectively, are now defined by
$$
\underline{\dim}_{B}(E)=\liminf_{r\to0}\frac{\log N_r(E)}{-\log
r}=\liminf_{r\to0}\frac{\log M_r(E)}{-\log r}
$$
and
$$
\overline{\dim}_{B}(E)=\limsup_{r\to0}\frac{\log N_r(E)}{-\log
r}=\limsup_{r\to0}\frac{\log M_r(E)}{-\log r}.
$$
These dimensions satisfy the following inequalities,
$$
\dim_H(E)\leq\underline{\dim}_{MB}(E)\leq\overline{\dim}_{MB}(E)\leq\dim_P(E),
$$

$$
\dim_H(E)\leq \dim_P(E)\leq\overline{\dim}_{B}(E)
$$
and
$$\dim_H(E)\leq \underline{\dim}_{B}(E)\leq\overline{\dim}_{B}(E).
$$
In particular, we have (see \cite{FKJ, MiOl})
$$
\underline{\dim}_{MB}(E)= \inf \left\{ \sup_{i}
\underline{\dim}_{B}(E_i)\;\Big|\;E\subseteq \bigcup_i E_i, \;\; E_i
\;\; \text{are bounded in } \; X\right\}
$$
and
$$
\overline{\dim}_{MB}(E)= \inf \left\{ \sup_{i}
\overline{\dim}_{B}(E_i) \;\Big|\; E\subseteq \bigcup_i E_i, \;\;
E_i \;\; \text{are bounded in } \; X\right\}.
$$
The reader is referred to \cite{Ed, FKJ} for an excellent discussion
of the Hausdorff dimension, the packing dimension, lower and upper
Hewitt-Stromberg dimension and the box dimensions.

\subsection{Multifractal Hewitt-Stromberg measures and
dimension functions}\label{sec2.4}

This section gives a brief summary of the main results in \cite{NB1,
NB2}. We recall the definitions  of the  lower and upper
multifractal Hewitt-Stromberg measures and dimension functions. Let
$q, t \in \R$ and $\mu$ be a compactly supported Borel probability
measure on $\mathbb{R}^n$. For $E\subseteq {K}=:\supp \mu$, the
pre-measure of $E$ is defined by
$$
\overline{ {\mathsf P}}_\mu^{q,t}(E)= \limsup_{r\to0} M_{\mu,r}^q(E)
(2r)^t,
$$
where
$$
M_{\mu,r}^q(E)=\sup \left\{\displaystyle \sum_i
\mu(B(x_i,r))^q\;\Big|\;\Big(B(x_i,r)\Big)_i\;\text{is a centered
packing of}\; E \right\}.
$$
It is clear that  $\overline{{\mathsf P}}_\mu^{q,t}$ is increasing
and $\overline{{\mathsf P}}_\mu^{q,t}(\emptyset ) =0$. However it is
not $\sigma$-additive. For this, by using the standard Method I
construction \cite[Theorem 4]{RCA}, we introduce  the
${\mathsf{P}}_\mu^{q,t}$-measure defined by
$$
{\mathsf{P}}_\mu^{q,t}(E)=\inf \left\{\displaystyle
\sum_i{\overline{\mathsf P}}_\mu^{q,t}(E_i)\;\Big|\;
E\subseteq\bigcup_i E_i\;\right\}.
 $$
In a similar way we define
$$
{\mathsf L}_\mu^{q,t}(E)= \liminf_{r\to0} N_{\mu,r}^q(E) (2r)^t,
$$
where
$$
N_{\mu,r}^q(E)=\inf \left\{\displaystyle \sum_i
\mu(B(x_i,r))^q\;\Big|\;\Big(B(x_i,r)\Big)_i\;\text{is a centered
covering of}\; E \right\}.
$$
Since ${\mathsf L}_\mu^{q,t}$  is  not increasing and not countably
sub-additive, one needs a standard modification to get an outer
measure. Hence, we modify the definition as follows
 $$
\overline{\mathsf{H}}_\mu^{q,t}(E)=\sup_{F\subseteq E}
{\mathsf{L}}_\mu^{q,t}(F)
 $$
and by applying now the standard Method I construction \cite[Theorem
4]{RCA}, we obtain
$$
\mathsf{H}_\mu^{q,t}(E)=\inf \left\{\displaystyle
\sum_i\overline{\mathsf H}_\mu^{q,t}(E_i)\;\Big|\;
E\subseteq\bigcup_i E_i\; \right\}.
$$

 \bigskip \bigskip
The measure $\mathsf{ H}^{q,t}_{\mu}$ is of course a multifractal
generalization of the lower $t$-dimensional Hewitt-Stromberg measure
${\mathsf{H}}^t$, whereas $\mathsf{ P}^{q,t}_{\mu}$ is a
multifractal generalization of the upper $t$-dimensional
Hewitt-Stromberg measures ${\mathsf{P}}^t$. In fact, it is easily
seen that, for $t>0$,  one has
$$
\mathsf{ H}^{0,t}_{\mu}={\mathsf{H}}^t\quad\text{and}\quad\mathsf{
P}^{0,t}_{\mu}={\mathsf{P}}^t.
$$

The following result describes some of the basic properties of the
multifractal Hewitt-Stromberg measures including the fact that
${\mathsf H}_\mu^{q,t}$ and ${\mathsf P}_\mu^{q,t}$ are outer
measures and summarizes the basic inequalities satisfied by these
measures.
\begin{theorem}\label{HHPP}\cite{NB1}
Let $q, t \in \R$. Then for every set $E\subseteq {K}$ we have
\begin{enumerate}
\item the set functions $\mathsf{H}_\mu^{q,t}$ and $\mathsf{P}_\mu^{q,t}$ are outer measures and
thus they are measures on the algebra of the measurable sets.
\item There exists an integer $\xi\in\mathbb{N}$, such that $\mathsf{H}_\mu^{q,t}(E)\leq \xi
\mathsf{P}_\mu^{q,t}(E).$
\end{enumerate}
\end{theorem}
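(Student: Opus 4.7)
\medskip
\noindent\textbf{Proof proposal.}

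\medskip

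For part~(1) I would reduce everything to the Method~I construction already invoked in the definitions of $\mathsf{H}_\mu^{q,t}$ and $\mathsf{P}_\mu^{q,t}$: if a set function $\tau\ge0$ satisfies $\tau(\emptyset)=0$, then $E\mapsto\inf\{\sum_i\tau(E_i):E\subseteq\bigcup_iE_i\}$ is an outer measure. It therefore suffices to verify that the two pre-measures $\overline{\mathsf{P}}_\mu^{q,t}$ and $\overline{\mathsf{H}}_\mu^{q,t}$ are nonnegative, monotone, and vanish on $\emptyset$. Monotonicity of $\overline{\mathsf{P}}_\mu^{q,t}$ follows from $M_{\mu,r}^q(F)\le M_{\mu,r}^q(E)$ whenever $F\subseteq E$ (any centred packing of $F$ is also one of $E$), while monotonicity of $\overline{\mathsf{H}}_\mu^{q,t}$ is built into its definition as $\sup_{F\subseteq E}\mathsf{L}_\mu^{q,t}(F)$; both obviously vanish on $\emptyset$. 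Carath\'eodory's theorem then delivers genuine measures on the algebra of measurable sets.

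For part~(2) the strategy is to prove the inequality at the level of pre-measures and push it through Method~I. Concretely, I would aim for
\begin{equation*}
\overline{\mathsf{H}}_\mu^{q,t}(E)\;\le\;\xi\,\overline{\mathsf{P}}_\mu^{q,t}(E)\qquad\text{for all }E\subseteq K,
\end{equation*}
with $\xi$ the Besicovitch constant in $\R^n$. Once this is established, the bound transfers automatically to the measures: for any countable cover $E\subseteq\bigcup_iE_i$ one has $\sum_i\overline{\mathsf{H}}_\mu^{q,t}(E_i)\le\xi\sum_i\overline{\mathsf{P}}_\mu^{q,t}(E_i)$, so taking the infimum yields $\mathsf{H}_\mu^{q,t}(E)\le\xi\,\mathsf{P}_\mu^{q,t}(E)$. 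Using monotonicity of $\overline{\mathsf{P}}_\mu^{q,t}$ and the sup-formula for $\overline{\mathsf{H}}_\mu^{q,t}$, the pre-measure inequality reduces to $\mathsf{L}_\mu^{q,t}(F)\le\xi\,\overline{\mathsf{P}}_\mu^{q,t}(F)$ for each $F\subseteq E$, and multiplying by $(2r)^t$ and passing to $\liminf$ and $\limsup$ reduces that further to the pointwise comparison
\begin{equation*}
N_{\mu,r}^q(F)\;\le\;\xi\,M_{\mu,r}^q(F)\qquad(r>0,\ q\in\R),
\end{equation*}
at each scale $r$.

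The main --- and really only non-routine --- step is this last pointwise estimate, and here I would invoke Besicovitch's covering theorem in $\R^n$. Given a near-optimal centred cover of $F$ by balls of common radius $r$, Besicovitch extracts $\xi=\xi(n)$ subfamilies, each consisting of pairwise disjoint balls still centred in $F$, whose union continues to cover $F$. Each such subfamily is a centred packing of $F$, so its weighted sum $\sum_B\mu(B)^q$ is bounded above by $M_{\mu,r}^q(F)$ by definition of the supremum; this remains valid for either sign of $q$ because $F\subseteq K=\supp\mu$ forces $\mu(B(x,r))>0$ for every centre $x\in F$. Summing over the $\xi$ subfamilies yields $N_{\mu,r}^q(F)\le\xi M_{\mu,r}^q(F)$ up to an arbitrarily small error, which closes the argument. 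The one mild point of vigilance is ensuring that the Besicovitch decomposition yields \emph{same-radius} centred sub-packings matching the definitions of $N_{\mu,r}^q$ and $M_{\mu,r}^q$; this is automatic since we apply Besicovitch to the fixed-radius centred family at each scale $r$.
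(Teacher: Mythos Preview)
The paper does not actually prove this theorem; it is simply quoted from \cite{NB1} without argument, so there is no in-paper proof to compare against. That said, your outline is the standard route and is essentially correct. For part~(1), Method~I produces an outer measure from any nonnegative set function vanishing on $\emptyset$, so your verifications are sufficient (monotonicity of the pre-measure is in fact not needed for Method~I, though checking it does no harm). For part~(2), your reduction to the scale-by-scale estimate $N_{\mu,r}^q(F)\le\xi\,M_{\mu,r}^q(F)$ via Besicovitch is exactly the expected argument, and it matches how the paper itself deploys Besicovitch in the proofs of Theorems~\ref{thinf} and~\ref{thess}.

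One technical point deserves tightening. You should not start from a ``near-optimal centred cover'' of $F$ but rather from the full family $\{B(x,r):x\in F\}$. Besicovitch's theorem requires that every point of the set being covered is the centre of some ball in the initial family; an arbitrary countable centred cover need not satisfy this, and the extracted subfamilies would then only be guaranteed to cover the set of centres, not $F$ itself. Starting instead from the full family, Besicovitch yields $\xi$ centred packings of $F$ whose union covers $F$; that union is then a legitimate competitor in the infimum defining $N_{\mu,r}^q(F)$, giving $N_{\mu,r}^q(F)\le\xi\,M_{\mu,r}^q(F)$ directly, with no $\varepsilon$-argument needed. With this adjustment your proof is complete.
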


The measures $\mathsf{ H}^{q,t}_{\mu}$ and $\mathsf{ P}^{q,t}_{\mu}$
and the pre-measure $\overline{\mathsf{P}}^{q,t}_{\mu}$ assign in
the usual way a multifractal dimension to each subset $E$ of
$\mathbb{R}^n$. They are respectively denoted by
$\mathsf{b}_{\mu}^q(E)$, $\mathsf{B}_{\mu}^q(E)$ and
$\mathsf{\Delta}_{\mu}^q(E)$,
\begin{proposition}\cite{NB2} Let $q \in \R$ and $E\subseteq {K}$. Then
\par\noindent\begin{enumerate}
\item  there exists a unique number ${\mathsf  b}_{\mu}^q(E)\in[-\infty,+\infty]$ such that
 $$
\mathsf{H}^{q,t}_{\mu}(E)=\left\{\begin{matrix}  \infty &\text{if}& t < {\mathsf  b}_{\mu}^q(E),\\
 \\
 0 & \text{if}&  {\mathsf  b}_{\mu}^q(E) < t,\end{matrix}\right.
 $$
\item  there exists a unique number ${\mathsf  B}_{\mu}^q(E)\in[-\infty,+\infty]$ such that
 $$
\mathsf{P}^{q,t}_{\mu}(E)=\left\{\begin{matrix}  \infty &\text{if}& t < {\mathsf  B}_{\mu}^q(E),\\
 \\
 0 & \text{if}&  {\mathsf  B}_{\mu}^q(E) < t,\end{matrix}\right.
 $$

\item  there exists a unique number ${\mathsf  \Delta}_{\mu}^q(E)\in[-\infty,+\infty]$ such that
 $$
\overline{\mathsf{P}}^{q,t}_{\mu}(E)=\left\{\begin{matrix}  \infty &\text{if}& t < {\mathsf  \Delta}_{\mu}^q(E),\\
 \\
 0 & \text{if}&  {\mathsf  \Delta }_{\mu}^q(E) < t.\end{matrix}\right.
 $$
\end{enumerate}
In addition, we have
$${\mathsf  b}_{\mu}^q(E) \le   {\mathsf  B }_{\mu}^q(E) \le  {\mathsf  \Delta}_{\mu}^q(E).$$
\end{proposition}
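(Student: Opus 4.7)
The plan is to treat the three parts uniformly by extracting a single underlying mechanism, and then to derive the chain of inequalities from structural facts already in Theorem \ref{HHPP} together with the trivial comparison of a Method I measure to its generating premeasure.

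The core observation is a scaling/monotonicity argument: for any $q$ fixed and $t_1 < t_2$, writing $(2r)^{t_2}=(2r)^{t_1}(2r)^{t_2-t_1}$ with $(2r)^{t_2-t_1}\to 0$ as $r\to 0$ shows that finiteness of the premeasure at exponent $t_1$ forces vanishing at exponent $t_2$. Concretely, I would first establish this dichotomy for $\overline{\mathsf{P}}_\mu^{q,t}$: if $\overline{\mathsf{P}}_\mu^{q,t_1}(E)<\infty$, then $M_{\mu,r}^q(E)(2r)^{t_1}$ is bounded for small $r$, so $M_{\mu,r}^q(E)(2r)^{t_2}\to 0$, giving $\overline{\mathsf{P}}_\mu^{q,t_2}(E)=0$. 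Hence the two sets
$$
\Big\{t\ge 0\;\big|\;\overline{\mathsf{P}}_\mu^{q,t}(E)=\infty\Big\}\quad\text{and}\quad\Big\{t\ge 0\;\big|\;\overline{\mathsf{P}}_\mu^{q,t}(E)=0\Big\}
$$
are separated by a unique real (or $\pm\infty$) threshold, which is exactly $\mathsf{\Delta}_\mu^q(E)$. The same scaling applied to $\mathsf{L}_\mu^{q,t}$ (replacing $\limsup$ by $\liminf$) yields the corresponding dichotomy for $\mathsf{L}_\mu^{q,t}$, and then for $\overline{\mathsf{H}}_\mu^{q,t}=\sup_{F\subseteq E}\mathsf{L}_\mu^{q,t}(F)$, since taking a supremum preserves the implication ``finite at $t_1 \Rightarrow$ zero at $t_2$''.

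For the Method I quantities $\mathsf{H}_\mu^{q,t}$ and $\mathsf{P}_\mu^{q,t}$, the dichotomy is inherited: if $\mathsf{H}_\mu^{q,t_1}(E)<\infty$, pick a cover $\{E_i\}$ of $E$ with $\sum_i \overline{\mathsf{H}}_\mu^{q,t_1}(E_i)<\infty$; then each term is finite, so by the premeasure dichotomy each $\overline{\mathsf{H}}_\mu^{q,t_2}(E_i)=0$, hence $\mathsf{H}_\mu^{q,t_2}(E)=0$ (and analogously for $\mathsf{P}$). Defining $\mathsf{b}_\mu^q(E)$ and $\mathsf{B}_\mu^q(E)$ as the unique thresholds (with the convention $\sup\emptyset=-\infty$, $\inf\emptyset=+\infty$ to cover the degenerate cases where the measure is identically $0$ or $\infty$) gives (1)–(3).

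The ordering is then immediate: for $\mathsf{B}_\mu^q(E)\le \mathsf{\Delta}_\mu^q(E)$, use that $\mathsf{P}_\mu^{q,t}(E)\le \overline{\mathsf{P}}_\mu^{q,t}(E)$ (take $E_1=E$ in the Method I infimum), so $\overline{\mathsf{P}}_\mu^{q,t}(E)=0$ forces $\mathsf{P}_\mu^{q,t}(E)=0$, and taking infima over $t$ yields the inequality. For $\mathsf{b}_\mu^q(E)\le \mathsf{B}_\mu^q(E)$, invoke Theorem \ref{HHPP}(2): $\mathsf{H}_\mu^{q,t}(E)\le \xi\,\mathsf{P}_\mu^{q,t}(E)$, so vanishing on the right forces vanishing on the left. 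The only subtlety to watch for is the degenerate cases (the measure is $0$ or $\infty$ for every $t$), which just make the threshold $-\infty$ or $+\infty$; these do not affect uniqueness or the chain of inequalities, and they are the only point where any real care is needed in the argument.
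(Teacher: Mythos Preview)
The paper does not actually supply a proof of this proposition: it is stated with the citation \cite{NB2} and no argument is given in the present paper. So there is nothing to compare your proposal against here.

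That said, your argument is correct and is exactly the standard mechanism one expects for such threshold results. The scaling identity $(2r)^{t_2}=(2r)^{t_1}(2r)^{t_2-t_1}$ with $t_2>t_1$ is precisely what drives the dichotomy, and your passage from the premeasures to the Method~I measures via a finite-sum cover is clean. The ordering $\mathsf{b}_\mu^q(E)\le\mathsf{B}_\mu^q(E)\le\mathsf{\Delta}_\mu^q(E)$ via Theorem~\ref{HHPP}(2) and the trivial bound $\mathsf{P}_\mu^{q,t}\le\overline{\mathsf{P}}_\mu^{q,t}$ is also the natural route.

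One small point of phrasing: for $\mathsf{L}_\mu^{q,t}$ the $\liminf$ being finite does \emph{not} give eventual boundedness of $N_{\mu,r}^q(E)(2r)^{t_1}$; rather, you extract a sequence $r_n\to 0$ along which $N_{\mu,r_n}^q(E)(2r_n)^{t_1}$ converges, and then $N_{\mu,r_n}^q(E)(2r_n)^{t_2}\to 0$ forces $\mathsf{L}_\mu^{q,t_2}(E)=0$. You almost certainly intended this, but as written (``the same scaling\ldots'') a reader could think you are reusing the boundedness argument from the $\limsup$ case verbatim, which would be a slip. This is a cosmetic fix, not a gap.
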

The number ${\mathsf  b}_{\mu}^q(E)$ is an obvious multifractal
analogue of the lower Hewitt-Stromberg dimension
$\underline{\dim}_{MB}(E)$ of $E$ whereas ${\mathsf  B}_{\mu}^q(E)$
is an obvious multifractal analogues of the upper Hewitt-Stromberg
dimension $\overline{\dim}_{MB}(E)$ of $E$. In fact, it follows
immediately from the definitions that
$$
{\mathsf
b}_{\mu}^0(E)=\underline{\dim}_{MB}(E)\quad\text{and}\quad{\mathsf
B}_{\mu}^0(E)=\overline{\dim}_{MB}(E).
$$
Next, we define the separator functions ${\mathsf  \Delta}_{\mu}$,
${\mathsf  B}_{\mu}$ and ${\mathsf  b}_{\mu}$ :
$\mathbb{R}\rightarrow [-\infty, +\infty]$ by,
\begin{center}
${\mathsf  \Delta}_{\mu}(q)={\mathsf  \Delta}_{\mu}^q(K)$,\;\;\;
${\mathsf B}_{\mu}(q)={\mathsf  B}_{\mu}^q(K)$\;\; and\;\;\;
${\mathsf b}_{\mu}(q)={\mathsf  b}_{\mu}^q(K).$
\end{center}
The definition of these dimension functions makes it clear that they
are counterparts of the $\tau$-function which appears in the
multifractal formalism. This being the case, it is important that
they have the properties described by the physicists. The next
proposition shows that these functions do indeed have some of these
properties.
\begin{proposition}\cite{NB2}\label{Prop1}
Let $q\in\mathbb{R}$ and $E\subseteq {K}$.
\begin{enumerate}
\item The functions $q \mapsto \mathsf{ H}^{q,t}_{\mu}(E)$,  $\mathsf{P}^{q,t}_{\mu}(E)$,  $ \mathsf{C}^{q,t}_{\mu}(E)$ are decreasing.
\\
\item The functions $t \mapsto \mathsf{ H}^{q,t}_{\mu}(E)$,  $\mathsf{P}^{q,t}_{\mu}(E)$,  $ \mathsf{C}^{q,t}_{\mu}(E)$ are
decreasing.\\
\item The functions $q \mapsto { {\mathsf  b}}^{q}_{\mu}(E)$, $ {{\mathsf  B}}^{q}_{\mu}(E)$,  $ {{\mathsf  \Delta}}^{q}_{\mu}(E)$
are decreasing.\\
\item The functions $q \mapsto  {{\mathsf  B}}^{q}_{\mu}(E)$,  $ {{\mathsf  \Delta}}^{q}_{\mu}(E)$ are
convex.\\
\item For $q < 1$, we have  $0\leq{{\mathsf b}}_{\mu}(q)\leq {{\mathsf B}}_{\mu}(q)\leq {{\mathsf \Delta}}_{\mu}(q)
$.\\
\item For $q = 1$, we have  ${{\mathsf b}}_{\mu}(q)= {{\mathsf B}}_{\mu}(q)= {{\mathsf \Delta}}_{\mu}(q)  =
0$.\\
\item For $q > 1$, we have ${{\mathsf b}}_{\mu}(q)\leq {{\mathsf B}}_{\mu}(q)\leq {{\mathsf \Delta}}_{\mu}(q) \le 0$.
\end{enumerate}
\end{proposition}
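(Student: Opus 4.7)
The plan is to prove the seven items in dependency order, since (5)--(7) rest on (3) and (6), while (3) and (4) rest on (1) and (2). The guiding principle is that inequalities established at the level of the integrand $\mu(B)^q(2r)^t$ propagate unchanged through the sup over packings (or inf over coverings), the $\limsup/\liminf$ in $r$, the sup over subsets, and the Method I construction, because each operation preserves non-strict inequalities.

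For (1), balls in a centred packing/covering are centred in $\supp\mu$, so $\mu(B(x_i,r))\in(0,1]$ and $z\mapsto z^q$ is non-increasing on $(0,1]$; this propagates to monotonicity of $\mathsf H^{q,t}_\mu$, $\mathsf P^{q,t}_\mu$, $\mathsf C^{q,t}_\mu$ in $q$. For (2), the same propagation works because $t\mapsto(2r)^t$ is non-increasing for $r<1/2$. Part (3) is then immediate from the definitions of $\mathsf b^q_\mu(E)$, $\mathsf B^q_\mu(E)$, $\mathsf\Delta^q_\mu(E)$ as critical thresholds.

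For (4), apply H\"older's inequality to any centred packing $(B_i)$: for $\alpha\in(0,1)$ and $\beta=1-\alpha$,
$$
\sum_i\mu(B_i)^{\alpha p+\beta q}\le\Bigl(\sum_i\mu(B_i)^p\Bigr)^{\alpha}\Bigl(\sum_i\mu(B_i)^q\Bigr)^{\beta},
$$
so $M^{\alpha p+\beta q}_{\mu,r}(E)\le M^p_{\mu,r}(E)^{\alpha}M^q_{\mu,r}(E)^{\beta}$. Multiplying by $(2r)^{\alpha s+\beta t}$ and taking $\limsup_{r\to 0}$, using $\limsup(a_r b_r)\le(\limsup a_r)(\limsup b_r)$ for non-negative sequences together with continuity of $z\mapsto z^{\alpha}$, gives
$$
\overline{\mathsf P}^{\alpha p+\beta q,\,\alpha s+\beta t}_{\mu}(E)\le\overline{\mathsf P}^{p,s}_{\mu}(E)^{\alpha}\,\overline{\mathsf P}^{q,t}_{\mu}(E)^{\beta}.
$$
Taking $E=K$ and letting $s\downarrow\mathsf\Delta_\mu(p)$, $t\downarrow\mathsf\Delta_\mu(q)$ delivers convexity of $\mathsf\Delta_\mu$. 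Convexity of $\mathsf B_\mu$ requires pushing this pre-measure inequality through the Method I construction: given covers $(E_i)$ and $(F_j)$ of $K$ that nearly realise $\mathsf P^{p,s}_\mu(K)$ and $\mathsf P^{q,t}_\mu(K)$ respectively, form the refinement $(E_i\cap F_j)$, apply the pre-measure inequality on each piece, and combine with H\"older for countable sums. Making the two infimal choices cooperate on a common refinement is the main obstacle.

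For (6), disjointness of a centred packing inside $K$ yields $\sum_i\mu(B_i)\le\mu(K)=1$, so $M^1_{\mu,r}(K)\le 1$ and $\overline{\mathsf P}^{1,t}_{\mu}(K)\le\limsup_{r\to 0}(2r)^t$, which is $0$ for $t>0$; hence $\mathsf\Delta_\mu(1)\le 0$. Conversely, for any $F$ with $\mu(F)>0$, any centred covering $(B(x_i,r))_i$ of $F$ satisfies $\sum_i\mu(B(x_i,r))\ge\mu(F)$ by subadditivity, so $N^{1}_{\mu,r}(F)\ge\mu(F)$ and $\mathsf L^{1,t}_{\mu}(F)=+\infty$ for $t<0$. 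Since any cover of $K$ contains a piece of positive $\mu$-measure (by $\sigma$-subadditivity applied to $\mu(K)=1$), this forces $\mathsf H^{1,t}_\mu(K)=+\infty$ for $t<0$, giving $\mathsf b_\mu(1)\ge 0$. Coupled with $\mathsf b_\mu\le\mathsf B_\mu\le\mathsf\Delta_\mu$ from the preceding proposition, this yields (6). Finally, (5) and (7) follow at once: by (3), $\mathsf b_\mu(q)\ge\mathsf b_\mu(1)=0$ for $q<1$ and $\mathsf\Delta_\mu(q)\le\mathsf\Delta_\mu(1)=0$ for $q>1$, and the chain $\mathsf b_\mu\le\mathsf B_\mu\le\mathsf\Delta_\mu$ closes the remaining gaps.
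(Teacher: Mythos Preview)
The paper does not prove this proposition; it is quoted from \cite{NB2}. Your sketch is essentially a reconstruction of the standard argument (going back to Olsen's original multifractal formalism) adapted to the Hewitt--Stromberg setting. Items (1)--(3) and (5)--(7) are correct as written, and your treatment of the convexity of $\mathsf\Delta_\mu$ in (4) via H\"older on packings and the sub-multiplicativity of $\limsup$ is fine.

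The one genuine gap is the convexity of $\mathsf B_\mu$. Your proposed route---apply the pre-measure product inequality on each piece $E_i\cap F_j$ of a common refinement, then combine via H\"older for countable sums---does not close. After H\"older you are left with a factor $\bigl[\sum_{i,j}\overline{\mathsf P}^{p,s}_\mu(E_i\cap F_j)\bigr]^\alpha$, and there is no reason for $\sum_j\overline{\mathsf P}^{p,s}_\mu(E_i\cap F_j)$ to be controlled by $\overline{\mathsf P}^{p,s}_\mu(E_i)$: the pre-measure $\overline{\mathsf P}^{p,s}_\mu$ is not countably subadditive (that failure is precisely why Method~I is needed in the first place). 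The standard fix is to work at the dimension level instead. One first shows, directly from Method~I, that
\[
\mathsf B_\mu^q(E)=\inf\Bigl\{\sup_i\mathsf\Delta_\mu^q(E_i)\ :\ E\subseteq\bigcup_iE_i\Bigr\};
\]
then, given covers $(E_i)$ and $(F_j)$ that nearly realise $\mathsf B_\mu^p(K)$ and $\mathsf B_\mu^q(K)$ in this $\sup$-sense, convexity of $q\mapsto\mathsf\Delta_\mu^q(G)$ for \emph{every} $G$ (which your H\"older argument does establish) together with monotonicity of $G\mapsto\mathsf\Delta_\mu^q(G)$ yields
\[
\sup_{i,j}\mathsf\Delta_\mu^{\alpha p+\beta q}(E_i\cap F_j)\le\alpha\,\mathsf B_\mu^p(K)+\beta\,\mathsf B_\mu^q(K)+\varepsilon,
\]
which is exactly what is needed.
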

\begin{remark}
It is easy to check that we get the same values for fractal and
multifractal measures and dimensions if we use just open balls or
just closed balls and would not change the results (for more detail
see \cite{BJ, FKJ, RTr, TR}). Note that the use of just closed balls
is necessary where the use of open balls was erroneous when applying
Vitali's theorem and in this case, we need to apply the {
Besicovitch covering theorem}. In particular, if we use closed
balls, then we obtain
$$
\overline{\mathsf{H}}^t(E)\leq \overline{\mathsf{P}}^t(E)\leq
\overline{\mathscr{P}}^t(E)
$$
and
$$
{\mathscr{H}}^t(E)\leq{\mathsf{H}}^t(E)\leq {\mathsf{P}}^t(E)\leq
{\mathscr{P}}^t(E).
$$
In addition, when $q \le 0$ or $q>0$ and $\mu$ satisfies the
doubling condition, we have $$\mathsf{H}_\mu^{q,t}(E)\leq
\mathsf{P}_\mu^{q,t}(E).$$
\end{remark}

 \bigskip \bigskip
The upper and lower local dimensions of a measure $\mu$ on
$\mathbb{R}^{n}$ at a point $x$ are respectively given by
$$
\overline{\alpha}_{\mu}(x)=\limsup _{r \rightarrow 0} \frac{\log
\mu(B(x, r))}{\log r}
$$
and
$$
\underline{\alpha}_{\mu}(x)=\liminf _{r \rightarrow 0} \frac{\log
\mu(B(x, r))}{\log r},
$$
where $B(x, r)$ denote the open ball of center $x$ and radius $r.$
We refer to the common value as the local dimension of $\mu$ at $x$,
and denote it by $\alpha_{\mu}(x)$. For $\alpha, \beta \geq 0$, let
us introduce the fractal sets
$$
\overline{E}_{\mu}(\beta)=\Big\{x \in K \mid
\overline{\alpha}_{\mu}(x) \leq \beta\Big\},
$$
$$
\underline{E}_{\mu}(\alpha)=\Big\{x \in K \mid
\underline{\alpha}_{\mu}(x) \geq \alpha\Big\},
$$
$$
E_{\mu}(\alpha, \beta)=\underline{E}_{\mu}(\alpha) \cap
\overline{E}_{\mu}(\beta)$$ and
$$
E_{\mu}(\alpha)=\underline{E}_{\mu}(\alpha)
\cap \overline{E}_{\mu}(\alpha).
$$
Before stating this formally, we remind the reader that if $\chi:
\mathbb{R} \rightarrow \mathbb{R}$ is a real valued function, then
the Legendre transform $\chi^{*}: \mathbb{R}
\rightarrow[-\infty,+\infty]$ of $\chi$ is defined by
$$
\chi^{*}(x)=\inf _{y}(x y+\chi(y)) .
$$

The multifractal formalism based on the measures ${\mathsf
H}^{q,t}_{\mu}$ and ${\mathsf P}^{q,t}_{\mu}$ and the dimension
functions $\mathsf{b}_{\mu}$, $\mathsf{B}_{\mu}$ and
$\mathsf{\Delta}_{\mu}$ provides a natural, unifying and very
general multifractal theory which includes all the hitherto
introduced multifractal parameters, i.e., the multifractal spectra
functions
$$
\alpha\mapsto
\mathsf{f}_\mu(\alpha)=:\underline{\dim}_{MB}E_\mu(\alpha)
$$
and
$$
\alpha\mapsto
\mathsf{F}_\mu(\alpha)=:\overline{\dim}_{MB}E_\mu(\alpha).
$$

The dimension functions $\mathsf{b}_{\mu}$ and $\mathsf{B}_{\mu}$
are intimately related to the spectra functions $\mathsf{f}_\mu$ and
$\mathsf{F}_\mu$, whereas the dimension function
$\mathsf{\Delta}_{\mu}$ is closely related to the upper box spectrum
(more precisely, to the upper multifractal box dimension function).
Let us briefly recall the notations and the main results proved in
\cite{NB2}. We say that the multifractal formalism which is based on
the Hewitt-Stromberg measures holds if,
$$
\underline{\dim}_{MB}(E_\mu(\alpha))=\overline{\dim}_{MB}(E_\mu(\alpha))=
{\mathsf b}_\mu^*(\alpha)={\mathsf B}_\mu^*(\alpha).
$$
One important thing which should be noted is that there are many
measures for which the multifractal formalism does not hold. In
fact, one question which several measure theorists are interested in
is, can we find a necessary and sufficient condition for the
multifractal formalism to hold? The authors in \cite{NB2} proved the
following statement.
\begin{theorem}\label{Jcole}
Let $\mu$ be a compactly supported Borel probability measure on
$\mathbb{R}^n$. Define
$$\underline{\alpha}=\displaystyle\sup_{0<q} -\frac{{{
\mathsf b}_{\mu}(q)}}{q}\quad\text{and}\quad
\overline{\alpha}=\displaystyle\inf_{0>q} -\frac{{{ \mathsf
b}_{\mu}(q)}}{q}.$$ Then,
$$
\underline{\dim}_{MB} \Big(E_{\mu}(\alpha)\Big)\leq { \mathsf
b}_{\mu}^*(\alpha)\quad\text{and}\quad \overline{\dim}_{MB}
\Big(E_{\mu}(\alpha)\Big)\leq {\mathsf
B}_{\mu}^*(\alpha)\quad\text{for all}\quad \alpha\in
(\underline{\alpha}, \;\overline{\alpha}).
$$
\end{theorem}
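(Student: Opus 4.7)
The standard route is to restrict to sets where the local behavior of $\mu$ is uniformly controlled, estimate the covering and packing numbers of such sets by their mass-weighted analogues, and then use the definitions of $\mathsf b_\mu(q)$ and $\mathsf B_\mu(q)$ to force $\mathsf H^t$ and $\mathsf P^t$ to vanish. Fix $\epsilon>0$ and, for each integer $n\ge 1$, put
\[
E_\mu^{n,\epsilon}(\alpha)=\Big\{x\in E_\mu(\alpha)\;\Big|\; r^{\alpha+\epsilon}\le\mu(B(x,r))\le r^{\alpha-\epsilon},\ \forall\, 0<r\le 1/n\Big\}.
\]
The inequalities $\underline\alpha_\mu(x)\ge\alpha$ and $\overline\alpha_\mu(x)\le\alpha$ defining $E_\mu(\alpha)$ imply $E_\mu(\alpha)=\bigcup_n E_\mu^{n,\epsilon}(\alpha)$, so by $\sigma$-subadditivity of the outer measures $\mathsf H^t$ and $\mathsf P^t$ it is enough to bound these measures on a single piece $F:=E_\mu^{n,\epsilon}(\alpha)$.

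The core estimate is pointwise. Fix $q>0$ and $r\le 1/n$. For any centered $r$-covering (resp.\ $r$-packing) $(B(x_i,r))_{i\in I}$ of a subset $F'\subseteq F$, the uniform lower bound $\mu(B(x_i,r))^q\ge r^{q(\alpha+\epsilon)}$ yields
\[
\#I\;\le\; r^{-q(\alpha+\epsilon)}\sum_{i\in I}\mu(B(x_i,r))^q,
\]
and taking the infimum (resp.\ supremum) over such families produces $N_r(F')\le r^{-q(\alpha+\epsilon)}\,N_{\mu,r}^q(F')$ and $M_r(F')\le r^{-q(\alpha+\epsilon)}\,M_{\mu,r}^q(F')$. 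Multiplying by $(2r)^t$ and passing to $\liminf$ or $\limsup$ as $r\to 0$ gives
\[
\mathsf L^t(F')\le 2^{q(\alpha+\epsilon)}\,\mathsf L_\mu^{q,\,t-q(\alpha+\epsilon)}(F'),\qquad \overline{\mathsf P}^t(F')\le 2^{q(\alpha+\epsilon)}\,\overline{\mathsf P}_\mu^{q,\,t-q(\alpha+\epsilon)}(F').
\]
For $q<0$ the effective pointwise bound becomes $\mu(B(x_i,r))^q\ge r^{q(\alpha-\epsilon)}$ and the analogous inequalities hold with $\alpha+\epsilon$ replaced by $\alpha-\epsilon$.

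Now I propagate these inequalities through the outer-measure constructions. Because $\overline{\mathsf H}^t$ is a supremum over subsets and the pointwise estimate holds for every $F'\subseteq F$, I obtain $\overline{\mathsf H}^t(F)\le 2^{q(\alpha+\epsilon)}\,\overline{\mathsf H}_\mu^{q,\,t-q(\alpha+\epsilon)}(F)$. Given any countable cover $(E_i)$ of $F$, applying this to $F\cap E_i$ and using monotonicity of $\overline{\mathsf H}_\mu^{q,\cdot}$ and $\overline{\mathsf P}_\mu^{q,\cdot}$ yields, after taking the infimum over covers,
\[
\mathsf H^t(F)\le 2^{q(\alpha+\epsilon)}\,\mathsf H_\mu^{q,\,t-q(\alpha+\epsilon)}(K),\qquad \mathsf P^t(F)\le 2^{q(\alpha+\epsilon)}\,\mathsf P_\mu^{q,\,t-q(\alpha+\epsilon)}(K).
\]
Choosing $t>q(\alpha+\epsilon)+\mathsf b_\mu(q)$ kills the right-hand side by the very definition of $\mathsf b_\mu(q)$, so $\mathsf H^t(F)=0$; summing in $n$ gives $\mathsf H^t(E_\mu(\alpha))=0$ and therefore $\underline{\dim}_{MB}(E_\mu(\alpha))\le q(\alpha+\epsilon)+\mathsf b_\mu(q)$. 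Letting $\epsilon\to 0$ produces $\underline{\dim}_{MB}(E_\mu(\alpha))\le q\alpha+\mathsf b_\mu(q)$ for every $q>0$; the case $q<0$ is identical with $\alpha-\epsilon$ in place of $\alpha+\epsilon$; and $q=0$ is trivial since $\mathsf b_\mu(0)=\underline{\dim}_{MB}(K)\ge \underline{\dim}_{MB}(E_\mu(\alpha))$. The whole argument carries over verbatim with $\mathsf P^t$, $\mathsf B_\mu(q)$ and $\overline{\dim}_{MB}$ replacing $\mathsf H^t$, $\mathsf b_\mu(q)$ and $\underline{\dim}_{MB}$. Taking the infimum over $q\in\mathbb R$ delivers the two Legendre-type bounds of the theorem.

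The main technical point is the transfer of the pointwise inequality from the covering/packing numbers to the pre-measures and then to the outer measures on $K$; this requires careful bookkeeping but no new idea. The hypothesis $\alpha\in(\underline\alpha,\overline\alpha)$ does not intervene in the inequalities themselves but only in the final step: it is precisely the range for which $q\alpha+\mathsf b_\mu(q)\ge 0$ for every $q\neq 0$, so that $\mathsf b_\mu^*(\alpha)\ge 0$ and the resulting upper bounds are not vacuous (outside this range the same argument would force $E_\mu(\alpha)$ to be empty).
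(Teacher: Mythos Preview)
Your proof is correct and, in fact, slightly more than the theorem asks for: your argument gives the inequalities $\underline{\dim}_{MB}(E_\mu(\alpha))\le q\alpha+\mathsf b_\mu(q)$ and $\overline{\dim}_{MB}(E_\mu(\alpha))\le q\alpha+\mathsf B_\mu(q)$ for \emph{every} $\alpha\ge 0$ and every $q\in\mathbb R$, so the restriction $\alpha\in(\underline\alpha,\overline\alpha)$ is, as you note, only there to make the Legendre transforms nonnegative.

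Your route is genuinely different from the paper's. The paper does not argue directly on $N_r$, $M_r$, $N_{\mu,r}^q$, $M_{\mu,r}^q$; instead it introduces an auxiliary gauge $\nu(B(x,r))=\mu(B(x,r))^q(2r)^{\mathsf B_\mu(q)}$, checks that $\mathsf B_\nu^1(K)\le 0$, and then invokes the general comparison Theorem~\ref{thinf} ($\overline{\dim}_{MB}(E)\le\sup_{x\in E}\overline\alpha_\nu(x)$ under the hypothesis $\mathsf B_\nu^1(E)\le 0$), together with the identity $\overline\alpha_\nu(x)=q\,\underline\alpha_\mu(x)+\mathsf B_\mu(q)$ for $q\le 0$ to obtain $\overline{\dim}_{MB}(\underline E_\mu(\alpha))\le q\alpha+\mathsf B_\mu(q)$. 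In other words, the paper packages the level-set decomposition and the covering/packing comparison inside Theorem~\ref{thinf} once and for all (that proof contains essentially the Besicovitch-based version of your pointwise estimate), and then deduces Theorem~\ref{Jcole} as a corollary. Your approach bypasses the auxiliary $\nu$ entirely and compares $N_r$, $M_r$ with $N_{\mu,r}^q$, $M_{\mu,r}^q$ directly on the uniform pieces $E_\mu^{n,\epsilon}(\alpha)$; this is closer in spirit to Olsen's original argument for the Hausdorff/packing multifractal formalism, is self-contained, and handles both the $\mathsf b_\mu$ and $\mathsf B_\mu$ bounds in one stroke. The paper's approach, on the other hand, isolates a reusable dimension-comparison principle (Theorem~\ref{thinf}) that also feeds into the lower-bound Theorem~\ref{ah1} and the non-coincidence results of Section~\ref{differ}.
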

It is more difficult to obtain a minoration for the dimensions of
the sets described in Theorem \ref{Jcole}. Selmi et al. \cite{NB2}
gave a sufficient condition for a valid multifractal formalism as
follows.
\begin{theorem}\label{ah1}
Let $q\in\mathbb{R}$ and suppose that ${\mathsf
H}^{q,\mathsf{B}_{\mu}(q)}_{\mu}\big( K\big)>0.$ Then,
$$
\underline{\dim}_{MB}\Big(E_{\mu}\big(-\mathsf{B}_{\mu_+}'(q),-\mathsf{B}_{\mu_-}'(q)\big)\Big)\geq
\left\{
  \begin{array}{ll}
  -q\mathsf{B}_{\mu_+}'(q)+\mathsf{B}_{\mu}(q), & \hbox{if } q\geq0 \\\\
-q\mathsf{B}_{\mu_-}'(q)+\mathsf{B}_{\mu}(q), & \hbox{if } q\leq0.
  \end{array}
\right.
$$
In particular, if $\mathsf{B}_{\mu}$ is differentiable at $q$, one
has
 $$
\underline{\dim}_{MB}\Big(E_{\mu}(-\mathsf{B}'_{\mu}(q))\Big)=\overline{\dim}_{MB}
\Big(E_{\mu}(-\mathsf{B}'_{\mu}(q))\Big)=
\mathsf{B}^*_{\mu}\big(-\mathsf{B}'_{\mu}(q)\big)=\mathsf{b}^*_{\mu}\big(-\mathsf{B}'_{\mu}(q)\big).
 $$
Conversely, if
$\underline{\dim}_{MB}\Big(E_{\mu}(-\mathsf{B}'_{\mu}(q))\Big)\geq\mathsf{B}^*_{\mu}\big(-\mathsf{B}'_{\mu}(q)\big),$
then $\mathsf{b}(q)=\mathsf{B}(q).$
\end{theorem}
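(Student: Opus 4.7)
The strategy is to use the hypothesis $\mathsf{H}^{q,h}_\mu(K)>0$ (with $h:=\mathsf{B}_\mu(q)$) together with the convexity of $\mathsf{B}_\mu$ to localize positive multifractal mass onto the iso-H\"older set $E_\mu(\alpha,\beta)$, where $\alpha:=-\mathsf{B}_{\mu_+}'(q)$ and $\beta:=-\mathsf{B}_{\mu_-}'(q)$, and then to convert this multifractal lower bound into an unweighted Hewitt-Stromberg lower bound feeding into the definition of $\underline{\dim}_{MB}$. I treat $q\ge 0$ in detail; $q\le 0$ is symmetric after exchanging the roles of the two one-sided derivatives. The target dimension is $D:=q\alpha+h=-q\mathsf{B}_{\mu_+}'(q)+\mathsf{B}_\mu(q)$.

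\emph{Localization.} For any $\alpha'<\alpha$ and any $p>q$ sufficiently close to $q$, convexity of $\mathsf{B}_\mu$ forces $\mathsf{B}_\mu(p)<h+\alpha'(q-p)$. On the set $A_{\alpha'}:=\{x\in K:\underline{\alpha}_\mu(x)<\alpha'\}$, the pointwise inequality $\mu(B(x,r))>r^{\alpha'}$ is available along arbitrarily small scales depending on $x$, and on each such ``bad'' ball
$$\mu(B(x,r))^q(2r)^h<\mu(B(x,r))^p(2r)^{h+\alpha'(q-p)}.$$
A Besicovitch-type extraction of a centered covering of $A_{\alpha'}$ by such balls, combined with the fact that $\mathsf{H}^{p,\,h+\alpha'(q-p)}_\mu(K)=0$ (because $h+\alpha'(q-p)>\mathsf{B}_\mu(p)$), yields $\mathsf{H}^{q,h}_\mu(A_{\alpha'})=0$. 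Taking a countable union over $\alpha'\nearrow\alpha$ and running the symmetric argument with $p<q$, the complement of $E_\mu(\alpha,\beta)$ in $K$ has zero $\mathsf{H}^{q,h}_\mu$-measure, so the hypothesis forces $\mathsf{H}^{q,h}_\mu\bigl(E_\mu(\alpha,\beta)\bigr)>0$.

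\emph{Dimension bound and differentiability.} Since $\mathsf{H}^{q,h}_\mu\le\overline{\mathsf{H}}^{q,h}_\mu$, one can pick $G\subseteq E_\mu(\alpha,\beta)$ with $\mathsf{L}^{q,h}_\mu(G)>0$. After stratifying by the integer $N$ past which the density bound $\mu(B(x,r))\le r^{\alpha-\varepsilon}$ first becomes uniform, one may assume that bound holds on $G$ for all $r\le 1/N$. For $q\ge 0$ this gives $\mu(B(x,r))^q\le r^{q(\alpha-\varepsilon)}$ uniformly on $G$, so every centered $r$-covering satisfies $N^q_{\mu,r}(G)\le N_r(G)\,r^{q(\alpha-\varepsilon)}$. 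Combined with $N^q_{\mu,r}(G)(2r)^h\gtrsim 1$, this forces $N_r(G)(2r)^{h+q(\alpha-\varepsilon)}\gtrsim 1$, hence $\mathsf{L}^{D-q\varepsilon}(G)>0$, giving $\underline{\dim}_{MB}\bigl(E_\mu(\alpha,\beta)\bigr)\ge D-q\varepsilon$; letting $\varepsilon\to 0^+$ yields the announced inequality. When $\mathsf{B}_\mu$ is differentiable at $q$, $\alpha=\beta=-\mathsf{B}_\mu'(q)$, the Legendre identity $\mathsf{B}_\mu^*(-\mathsf{B}_\mu'(q))=D$ holds, Theorem \ref{Jcole} supplies the matching upper bound, and the inequalities $\mathsf{b}\le\mathsf{B}$ (whence $\mathsf{b}^*\le\mathsf{B}^*$) together with $\underline{\dim}_{MB}(E_\mu(-\mathsf{B}_\mu'(q)))\le\mathsf{b}_\mu^*(-\mathsf{B}_\mu'(q))$ pinch the four quantities into a single value. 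The converse reuses the same pinching: assuming $\underline{\dim}_{MB}(E_\mu(-\mathsf{B}_\mu'(q)))\ge\mathsf{B}_\mu^*(-\mathsf{B}_\mu'(q))$ and combining with $\underline{\dim}_{MB}\le\mathsf{b}_\mu^*$ forces $\mathsf{b}_\mu^*(-\mathsf{B}_\mu'(q))=\mathsf{B}_\mu^*(-\mathsf{B}_\mu'(q))$, after which evaluating the two Legendre transforms at the duality point $q$ (where $\mathsf{B}_\mu^*$ attains its infimum) produces $\mathsf{b}(q)=\mathsf{B}(q)$.

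\emph{Main obstacle.} The technically delicate part is the localization step. Because $\mathsf{L}^{q,h}_\mu$ is built from a $\liminf$ in $r$, the density bound $\mu(B(x,r))>r^{\alpha'}$ on $A_{\alpha'}$ is available only along a sparse subsequence of radii that depends on $x$. Producing, for each fixed small $r$, an honest centered $r$-covering of $A_{\alpha'}$ by balls that carry the required density estimate demands a careful Besicovitch-style selection after first stratifying $A_{\alpha'}$ according to the scales at which the bad density first manifests, and managing this stratification while simultaneously taking $\alpha'\nearrow\alpha$ and $p\searrow q$ is where the bulk of the technical work resides.
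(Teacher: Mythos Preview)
Your two-step strategy (localize the positive $\mathsf{H}^{q,h}_\mu$-mass onto $E_\mu(\alpha,\beta)$, then convert to an unweighted Hewitt--Stromberg lower bound via the density inequality) is exactly the paper's strategy. The paper simply packages it through an auxiliary ``measure'' $\nu(B(x,r))=\mu(B(x,r))^q(2r)^{\mathsf{B}_\mu(q)}$ and the bivariate $\zeta=(\mu,\nu)$ framework: then $\pi=\mathsf{H}^{1,0}_\nu$ coincides with $\mathsf{H}^{q,\mathsf{B}_\mu(q)}_\mu$, Proposition~\ref{P1} is your localization step, and Theorem~\ref{thess} (via Proposition~\ref{P2}) is your dimension-bound step. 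The differentiable case and the converse are handled the same way.

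The one substantive difference is how the ``main obstacle'' in localization is dispatched. You propose to compare $\mathsf{L}^{q,h}_\mu$ on $A_{\alpha'}$ to $\mathsf{H}^{p,\,h+\alpha'(q-p)}_\mu$ and then stratify by the scale at which the bad density first appears; this works but is laborious. The paper instead compares to the \emph{packing} pre-measure $\overline{\mathsf{P}}^{p,\,h+\alpha'(q-p)}_\mu$ (equivalently $\overline{\mathsf{P}}^{-t,\alpha t}_\zeta$): one covers by the $x$-dependent bad-scale balls, applies Besicovitch to extract disjoint subfamilies, and bounds the covering quantity $N^{q}_{\mu,r}$ by a packing quantity $M^{p}_{\mu,r}$. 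Since the relevant vanishing comes from $\mathsf{B}_\mu(p)<h+\alpha'(q-p)$, which already kills $\mathsf{P}^{p,\cdot}_\mu$ (and a fortiori $\overline{\mathsf{P}}^{p,\cdot}_\mu$ on the pieces of a suitable partition), the sparse-subsequence issue disappears: liminf of a covering quantity is controlled by limsup of a packing quantity, and no scale-stratification is needed. Your Besicovitch extraction is the right move; the mismatch is only in the target of the comparison.
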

From the last part, when $\mathsf{B}'_{\mu}(q)$ exists,
$\mathsf{b}(q)=\mathsf{B}(q)$, known as an analogue of Taylor
regularity condition, is the necessary condition for a valid
multifractal formalism. Nevertheless, we don't know if the weaker
condition $\mathsf{b}(q)=\mathsf{B}(q)$ is sufficient to obtain the
conclusion of the first part of Theorem \ref{ah1}.

\section{Results and new proofs}\label{sec3}
In this section we present our main results: we estimate the upper
and lower bounds of the Hewitt-Stromberg dimensions of a subset $E$
of $\mathbb{R}^n$. We apply these results to give a new (and in
several cases simpler) proof of Theorems \ref{Jcole} and \ref{ah1}
and provide some results even at points $q$ for which the
multifractal dimension functions $\mathsf{b}_{\mu}(q)$ and
$\mathsf{B}_{\mu}(q)$ differ.

\subsection{Estimates for the Hewitt-Stromberg
dimensions}\label{Estimates} Let $\nu$ be a compactly supported
Borel probability measure on $\mathbb{R}^n$ with
$\mathscr{K}=:\supp\nu$. Throughout this paper, we denote for any
$E\subseteq \mathscr{K}$, $$\pi({E})={{\mathsf
H}_{\nu}^{1,0}}({E}).$$
\begin{theorem}\label{thinf}
Let $E$ be a bounded subset of $\mathbb{R}^n$ and suppose that
${\mathsf B}_{\nu}^{1}(E)\leq 0$,
then\\
\begin{eqnarray*}\label{th1}
\underline{\dim}_{MB}(E)\leq \sup_{x\in E}
\underline{\alpha}_{\nu}(x) \quad\text{and}\quad
\overline{\dim}_{MB}(E)\leq \sup_{x\in E}
\overline{\alpha}_{\nu}(x).
\end{eqnarray*}
\end{theorem}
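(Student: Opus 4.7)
The plan is to prove both inequalities by decomposing $E$ into countably many pieces on which the covering or packing numbers are controlled via the local dimensions of $\nu$, and then invoking the $\sigma$-stability characterizations of $\underline{\dim}_{MB}$ and $\overline{\dim}_{MB}$ recalled in Section \ref{sec2.2}.

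For the upper Hewitt-Stromberg inequality $\overline{\dim}_{MB}(E) \leq \sup_{x\in E} \overline{\alpha}_\nu(x)$, I would fix $t$ strictly above the right-hand side. Since $\overline{\alpha}_\nu(x) < t$ for every $x \in E$, there exists $\rho(x) > 0$ with $\nu(B(x,r)) \geq (2r)^t$ for all $r \in (0, \rho(x)]$. Setting $E_k = \{x \in E : \rho(x) \geq 1/k\}$ gives the countable cover $E = \bigcup_k E_k$. For any centered $r$-packing $(B(x_i,r))_i$ of $E_k$ with $r \leq 1/k$, the hypothesis $\mathsf{B}_\nu^{1}(E) \leq 0$ (which for a probability measure amounts to the mass bound $\sum_i \nu(B(x_i,r)) \leq 1$ along disjoint centered packings) yields
\[
M_r(E_k)(2r)^t \leq \sum_i \nu(B(x_i,r)) \leq 1.
\]
Passing to $\limsup_{r\to 0}$ gives $\overline{\mathsf{P}}^t(E_k) \leq 1$, whence $\overline{\dim}_{MB}(E_k) \leq t$. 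The $\sigma$-stability of $\overline{\dim}_{MB}$ then delivers $\overline{\dim}_{MB}(E) \leq t$.

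For the lower Hewitt-Stromberg inequality $\underline{\dim}_{MB}(E) \leq \sup_{x \in E} \underline{\alpha}_\nu(x)$, the argument is subtler because $\underline{\alpha}_\nu(x) < t$ only provides a sequence of good radii $r_k(x) \to 0$ and no uniform threshold. I would introduce the dyadic sets $G_n = \{x \in E : \nu(B(x, 2^{-n})) \geq (2 \cdot 2^{-n})^t\}$ and observe that every $x \in E$ belongs to infinitely many $G_n$. For any $F \subseteq G_n$, applying the Besicovitch covering theorem to $\{B(x, 2^{-n}) : x \in F\}$ produces $\xi$ disjoint subfamilies whose union covers $F$, and the mass bound gives at most $2^{(n-1)t}$ balls in each subfamily, so $N_{2^{-n}}(F) \leq \xi \cdot 2^{(n-1)t}$ and hence $\mathsf{L}^t(F) \leq \xi \cdot 2^t$. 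The concluding step is to assemble a countable cover $E = \bigcup_j F_j$ with each $F_j$ contained in a common intersection $\bigcap_k G_{n_j(k)}$ along a strictly increasing sequence $(n_j(k))_k$; on each such piece, $\underline{\dim}_B(F_j) \leq t$ through the scales $2^{-n_j(k)}$, and $\sigma$-stability closes the argument.

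The main obstacle I anticipate is precisely this assembly step: purely combinatorially, countably many increasing sequences cannot contain a subsequence of every infinite subset of $\mathbb{N}$, so the proof must exploit the boundedness of $E \subset \mathbb{R}^n$ together with the finite Besicovitch constant $\xi$ to make the decomposition genuinely countable. I expect the paper accomplishes this by rounding each individually chosen $r_k(x)$ to the dyadic lattice and then re-invoking the hypothesis $\mathsf{B}_\nu^{1}(E) \leq 0$ to pool the mass control uniformly across the resulting dyadic pieces.
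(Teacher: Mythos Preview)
Your argument for the upper Hewitt--Stromberg bound is correct and close in spirit to the paper's: both pass to the threshold sets $E_m=\{x\in E:\nu(B(x,r))>r^\alpha\text{ for all }r<1/m\}$, control $M_r(E_m)(2r)^{\alpha}$ via the packed $\nu$-mass, and invoke $\sigma$-stability. The paper inserts an additional decomposition $E=\bigcup_j E_j$ supplied by $\mathsf{P}_\nu^{1,\varepsilon}(E)=0$ and compares to $M_{\nu,r}^{1}(E_j)(2r)^\varepsilon$; you short-circuit this with the disjointness bound $\sum_i\nu(B_i)\le 1$, which is legitimate since $\nu$ is a probability measure, so the two routes coincide in substance.

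Your route for $\underline{\dim}_{MB}$, however, has the gap you yourself diagnose, and your guess at the repair is not what the paper does. First, a minor slip: from $F\subseteq G_n$ for a single $n$ you cannot conclude $\mathsf{L}^t(F)\le\xi\cdot 2^t$, since that controls $N_r(F)(2r)^t$ only at the one scale $r=2^{-n}$; you need $F\subset\bigcap_k G_{n_j(k)}$ along a sequence $n_j(k)\to\infty$, as you then say. The fatal issue is the one you name: there is no countable family of such intersections covering $E$, and no dyadic rounding or Besicovitch-constant argument salvages this.

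The paper sidesteps the obstruction entirely by never trying to align good scales across points. Given $F\subset E_k$ and an \emph{arbitrary} scale $\delta>0$, for each $x\in F$ one picks a single good radius $r_x<\delta$ with $\nu(B(x,r_x))>r_x^\alpha$, then \emph{enlarges} to the common radius $\delta$: monotonicity of $\nu$ gives $\nu(B(x,\delta))\ge\nu(B(x,r_x))>(\delta/\lambda)^\alpha$, where $\lambda$ is the worst ratio $\delta/r_x$ over a finite subcover. Besicovitch at the single radius $\delta$ converts this cover into $\xi_n$ packings and yields
\[
N_\delta(F)\,(2\delta)^{\alpha+\varepsilon}\;\le\;(2\lambda)^\alpha\,\xi_n\,M_{\nu,\delta}^{1}(F)\,(2\delta)^\varepsilon.
\]
The hypothesis $\mathsf{B}_\nu^{1}(E)\le 0$ is now invoked not as a mere mass bound but as $\mathsf{P}_\nu^{1,\varepsilon}(E)=0$, furnishing a countable cover $E=\bigcup_k E_k$ with $\overline{\mathsf{P}}_\nu^{1,\varepsilon}(E_k)=0$, which annihilates the right-hand side and gives $\overline{\mathsf{H}}^{\alpha+\varepsilon}(E_k)=0$. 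The idea you were missing is that a single good radius below each $\delta$, lifted to $\delta$ by monotonicity of the measure, already suffices; one never needs a subsequence of good scales per point.
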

\begin{proof}
Take $\alpha>\sup_{x\in E} \underline{\alpha}_{\nu}(x)$ and
$\varepsilon>0$. It follows from  ${\mathsf B}_{\nu}^{1}(E)\leq 0$
that ${\mathsf P}^{1,\varepsilon}_{\nu}(E)=0$, then we can choose a
sequence $(E_k)_k$ such that $E=\bigcup_k E_k$,
$$
\sum_k \overline{\mathsf P}^{1,\frac\varepsilon
2}_{\nu}(E_k)<1\quad\text{and}\quad \sum_k \overline{\mathsf
P}^{1,\varepsilon}_{\nu}(E_k)=0.
$$
Fix $k\in \mathbb{N}$. Let $F\subset E_k$ and $\delta>0$. For all
$x\in F$ we can find $\lambda_x \ge 2$ and
$\frac{\delta}{\lambda_x}<r_x<\delta,$ such that
 $$
    \nu(B(x,r_x))> r_x^\alpha.
 $$
The family $\Big(B(x,r_x)\Big)_{x\in F}$ is a centered
$\delta$-covering of $ \overline{F}.$ Then, we can choose a finite
subset $J$ of $\N$ such that the family
$\Big(B(x_i,r_{x_i})\Big)_{i\in J}$ is a centered $\delta$-covering
of $F$. Take $\lambda = \max\{\lambda_{x_i};\;{i\in J}\}$, then for
all $i\in J$, we have
$$
 \nu(B(x_i, \delta)) \ge  \nu(B(x_i,r_{x_i}))> r_{x_i}^\alpha \ge \left(\frac{\delta}{\lambda}\right)^\alpha.
$$
Since $\Big(B(x_i , \delta )\Big)_{ i\in J}$  is a centered covering
of $ F,$ then by using Besicovitch's covering theorem, we can
construct $\xi_n$ finite  sub-families $\Big(B(x_{1j},
\delta)\Big)_j$, \ldots ,$\Big(B(x_{\xi_n j}, \delta )\Big)_j$, such
that each  $$
F\subseteq\displaystyle\bigcup_{i=1}^{\xi_n}\bigcup_jB(x_{ij},
\delta)$$ and $\Big(B_{ij}=B(x_{ij},\delta)\Big)_j$ is a packing of
$F$ and such that
$\nu\big(B_{ij}\big)>\left(\frac{\delta}{\lambda}\right)^\alpha$.
Observing that
$$
N_\delta(F) (2\delta)^{\alpha+\varepsilon}\leq\sum_{i,j}
(2\delta)^{\alpha+\varepsilon}\leq (2\lambda)^\alpha\sum_{i,j}
\nu\big(B_{ij}\big)(2\delta)^{\varepsilon}\leq
(2\lambda)^\alpha\xi_n M_{\nu,\delta}^1(F)(2\delta)^{\varepsilon}.
$$
It follows from this that $$
{\mathsf{L}}^{\alpha+\varepsilon}(F)\leq (2\lambda)^\alpha\xi_n
\overline{\mathsf P}^{1,\varepsilon}_{\nu}(F)\leq
(2\lambda)^\alpha\xi_n \overline{\mathsf
P}^{1,\varepsilon}_{\nu}(E_k)
$$
which implies that
$$
\overline{\mathsf{H}}^{\alpha+\varepsilon}(E_k)\leq
(2\lambda)^\alpha\xi_n \overline{\mathsf
P}^{1,\varepsilon}_{\nu}(E_k)$$
and
$$
{\mathsf{H}}^{\alpha+\varepsilon}(E)\leq \sum_k
\overline{\mathsf{H}}^{\alpha+\varepsilon}(E_k) \leq
(2\lambda)^\alpha\xi_n\sum_k \overline{\mathsf
P}^{1,\varepsilon}_{\nu}(E_k)=0.
$$
We therefore conclude that
$$
\underline{\dim}_{MB}(E)\leq\alpha+\varepsilon \quad\text{for
all}\quad \varepsilon>0.$$ Finally, we get
$$
\underline{\dim}_{MB}(E)\leq \sup_{x\in E}
\underline{\alpha}_{\nu}(x).
$$

 \bigskip
Now, take $\alpha>\sup_{x\in E} \overline{\alpha}_{\nu}(x)$ and
$\varepsilon>0$. Since ${\mathsf B}_{\nu}^{1}(E)\leq 0$, we have
${\mathsf P}^{1,\varepsilon}_{\nu}(E)=0$, then there exists
$(E_j)_j$ such that $E=\bigcup_j E_j$,
$$
\sum_j \overline{\mathsf P}^{1,\frac\varepsilon
2}_{\nu}(E_j)<1\quad\text{and}\quad \sum_j \overline{\mathsf
P}^{1,\varepsilon}_{\nu}(E_j)=0.
$$
For all $x\in E$, we can therefore choose $\delta>0$ such that, for
all $0<r<\delta$, we have
$$
\nu\big(B(x,r)\big)>r^\alpha.
$$
Put the set
$$
E_m=\left\{x\in E\;\Biggl|\;\; \text{for all}\; r<
\frac1m,\;\;\nu\big(B(x,r)\big)>r^\alpha\right\}.
$$
Fix $m\in \mathbb{N}$ and $0<r<\min(\delta,\frac1m)$. Let
$\Big(B_i=B(x_i,r)\Big)_i$ be a packing of $E_j\cap E_m$. Then
$$
M_r(E_j\cap E_m) (2r)^{\alpha+\varepsilon} \leq2^\alpha \sum_i
\nu\big(B_i\big) (2r)^{\varepsilon} \leq 2^\alpha
M_{\nu,r}^1(E_j)(2r)^{\varepsilon}
$$
and
$$
\overline{\mathsf P}^{\alpha+\varepsilon}(E_j \cap E_m)\leq
2^\alpha\overline{\mathsf P}^{1,\varepsilon}_{\nu}(E_j)=0.
$$
This clearly implies that
$$
{\mathsf P}^{\alpha+\varepsilon}(E_m)\leq \sum_j\overline{\mathsf
P}^{\alpha+\varepsilon}(E_j \cap E_m)\leq
2^\alpha\sum_j\overline{\mathsf P}^{1,\varepsilon}_{\nu}(E_j)=0.
$$
We deduce the result from the fact that $E_m \nearrow E$. This
completes the proof of Theorem \ref{thinf}.
\end{proof}

It always needs an extra condition to obtain a lower bound for the
dimensions of sets.
\begin{theorem}\label{thess}
Let $E\subseteq \mathscr{K}$ and assume that $\pi(E)> 0$, then\\
\begin{eqnarray*}\label{th3}
\underline{\dim}_{MB}(E)\geq{ _{\substack{\displaystyle\supess\\
_{x\in E}}}} \underline{\alpha}_{\nu}(x) \quad\text{and}\quad
\overline{\dim}_{MB}(E)\geq { _{\substack{\displaystyle\supess\\
_{x\in E}}}}\overline{\alpha}_{\nu}(x),
\end{eqnarray*}
where the essential bounds being related to the measure $ \pi$.
\end{theorem}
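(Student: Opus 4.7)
The aim is to mimic the strategy of Theorem \ref{thinf} in the reverse direction. Fix $\alpha$ strictly below the targeted $\pi$-essential supremum and pick an intermediate $\beta\in(\alpha,\supess_{x\in E}\underline{\alpha}_{\nu}(x))$ (resp.\ $\beta\in(\alpha,\supess_{x\in E}\overline{\alpha}_{\nu}(x))$). By the very definition of the essential supremum relative to $\pi$, the set $F=\{x\in E:\underline{\alpha}_{\nu}(x)>\beta\}$ (resp.\ $F=\{x\in E:\overline{\alpha}_{\nu}(x)>\beta\}$) satisfies $\pi(F)>0$. Everything then reduces to showing $\underline{\dim}_{MB}(F)\ge\alpha$ (resp.\ $\overline{\dim}_{MB}(F)\ge\alpha$), after which I let $\alpha$ tend to the essential supremum.

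\textbf{First inequality.} The condition $\underline{\alpha}_{\nu}(x)>\beta$ forces $\nu(B(x,r))\le r^{\beta}$ for all sufficiently small $r$, so the sets
$$
F_m=\{x\in F:\nu(B(x,r))\le r^{\beta}\text{ for every }0<r<1/m\}
$$
increase to $F$, and upward continuity of $\pi$ selects $m$ with $\pi(F_m)>0$, which I fix. The aim becomes $\mathsf{H}^{t}(F_m)=+\infty$ for every $t<\alpha$. Take a countable cover $(E_j)_j$ of $F_m$ and set $H_j=E_j\cap F_m$; subadditivity of $\pi$ supplies an index with $\pi(H_j)>0$, and since $\overline{\mathsf{H}}_{\nu}^{1,0}(H_j)\ge\mathsf{H}_{\nu}^{1,0}(H_j)=\pi(H_j)>0$, the definition of $\overline{\mathsf{H}}_{\nu}^{1,0}$ as a supremum over subsets yields $F'\subseteq H_j$ with $\mathsf{L}_{\nu}^{1,0}(F')>0$. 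On $F'\subseteq F_m$, every centered ball $B(y,r)$ with $r<1/m$ satisfies $\nu(B(y,r))\le r^{\beta}$, so any centered $r$-cover witnesses $N_{\nu,r}^{1}(F')\le r^{\beta}N_r(F')$, which I rewrite as
$$
N_r(F')(2r)^{t}\ge 2^{t}N_{\nu,r}^{1}(F')\,r^{t-\beta}.
$$
Taking $\liminf_{r\to 0}$ and using $t<\beta$ together with $\mathsf{L}_{\nu}^{1,0}(F')>0$ yields $\mathsf{L}^{t}(F')=+\infty$, so $\overline{\mathsf{H}}^{t}(H_j)\ge\mathsf{L}^{t}(F')=+\infty$. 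Consequently $\sum_j\overline{\mathsf{H}}^{t}(E_j)=+\infty$ and $\mathsf{H}^{t}(F_m)=+\infty$, giving $\underline{\dim}_{MB}(E)\ge\underline{\dim}_{MB}(F_m)\ge\alpha$.

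\textbf{Second inequality.} Here $\overline{\alpha}_{\nu}(x)>\beta$ only supplies $\nu(B(x,r))\le r^{\beta}$ along a sequence $r\to 0$ depending on $x$, so I group scales dyadically. Put $s_n=2^{-n-1}$ and
$$
H_n=\{x\in F:\nu(B(x,s_n))\le 2^{\beta}s_n^{\beta}\}.
$$
Whenever $r\in[s_n,2s_n]$ is a good scale for $x$, monotonicity of $r\mapsto\nu(B(x,r))$ places $x$ in $H_n$, so $F\subseteq\limsup_n H_n$. Consequently $\pi\big(\bigcup_{n\ge N}H_n\big)\ge\pi(F)>0$ for every $N$, so infinitely many $n$ satisfy $\pi(H_n)>0$, and the same holds for the restrictions $H'_j\cap H_n$ whenever $H'_j=F\cap E_j$ has $\pi(H'_j)>0$. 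The measure comparison $\mathsf{H}_{\nu}^{1,0}\le\xi\mathsf{P}_{\nu}^{1,0}$ of Theorem \ref{HHPP} then gives $\overline{\mathsf{P}}_{\nu}^{1,0}(H'_j\cap H_n)>0$, and the packing analogue of the previous bound,
$$
M_{s_n}(H'_j\cap H_n)\ge(2^{\beta}s_n^{\beta})^{-1}M_{\nu,s_n}^{1}(H'_j\cap H_n),
$$
multiplied by $(2s_n)^{t}$, produces $M_{s_n}(H'_j\cap H_n)(2s_n)^{t}\to+\infty$ along a suitable subsequence $n_k\to\infty$. Taking $\limsup_k$ yields $\overline{\mathsf{P}}^{t}(H'_j)=+\infty$ for every $t<\alpha$, hence $\mathsf{P}^{t}(F)=+\infty$ and $\overline{\dim}_{MB}(F)\ge\alpha$.

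\textbf{Main obstacle.} The delicate step is interlocking the two limit operations in the second inequality: the fixed-radius character of $\overline{\mathsf{P}}^{t}$ (a $\limsup$ in $r$) must be married to the ``infinitely often'' character of $\overline{\alpha}_{\nu}$, and the dyadic grouping above is what lets me extract a common sequence $s_{n_k}$ on which both $\pi(H'_j\cap H_{n_k})>0$ and $M_{\nu,s_{n_k}}^{1}(H'_j\cap H_{n_k})$ stay bounded below. This selection is the one place where the full strength of the measure comparison from Theorem \ref{HHPP} is required to transport the $\pi$-information into a packing-count lower bound at a common scale.
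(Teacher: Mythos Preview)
Your argument for the first inequality is correct and essentially matches the paper's, though your organization is arguably cleaner: rather than bounding a double sum $\sum_{j,p}\overline{\mathsf H}^{1,0}_\nu(F_{j_p})$ from below by $\pi(F)$ and comparing term by term with $\overline{\mathsf H}^{\alpha}(F_{j_p})$, you locate in every cover a single piece $H_j$ with $\pi(H_j)>0$, pass to $F'\subseteq H_j$ with $\mathsf L_\nu^{1,0}(F')>0$, and show $\mathsf L^{t}(F')=+\infty$ directly from the pointwise bound $\nu(B(y,r))\le r^{\beta}$.

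The second inequality, however, has a genuine gap at exactly the step you flag in the ``Main obstacle'' paragraph. From $\pi(H'_j\cap H_n)>0$ you obtain, via Theorem~\ref{HHPP}, $\overline{\mathsf P}_\nu^{1,0}(H'_j\cap H_n)>0$. But $\overline{\mathsf P}_\nu^{1,0}(A)=\limsup_{r\to 0}M_{\nu,r}^{1}(A)$ is a statement about the behaviour of $M_{\nu,r}^{1}(A)$ along \emph{some} sequence $r\to 0$ with $n$ held fixed; it says nothing about $M_{\nu,s_n}^{1}(H'_j\cap H_n)$ at the \emph{particular} scale $s_n$ attached to that $n$. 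Since the sets $H_n$ vary with $n$, there is no selection principle here that produces a single subsequence $s_{n_k}$ on which the packing counts $M_{\nu,s_{n_k}}^{1}(H'_j\cap H_{n_k})$ stay bounded below, and none follows from the hypotheses. Nor would a uniform lower bound on $\pi(H'_j\cap H_n)$ help: $\pi$ is itself a limiting quantity and does not control $M_{\nu,r}^{1}$ at any fixed $r$. The two limits you mention remain un-interlocked.

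The paper avoids this by never separating scales from points. For an arbitrary $G\subseteq F$ it covers $G$ by balls $B(x,r_x)$ with $r_x<\delta$ and $\nu(B(x,r_x))\le r_x^{\alpha}$ (possible because $\overline\alpha_\nu(x)>\alpha$), extracts a finite subcover, passes to a common radius $r$, and applies Besicovitch's covering theorem to split the cover into $\xi_n$ packings \emph{at that same radius}. This yields the comparison $N_{\nu,r}^{1}(G)\le 2^{-\alpha}\xi_n\, M_r(G)(2r)^{\alpha}$, hence $\mathsf L_\nu^{1,0}(G)\le 2^{-\alpha}\xi_n\,\overline{\mathsf P}^{\alpha}(G)$ after letting $\delta\to 0$; taking the supremum over $G\subseteq F_i$ and summing over an arbitrary cover $(F_i)$ of $F$ then forces $\mathsf P^{\alpha}(F)>0$ from $\pi(F)>0$. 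The essential point is that Besicovitch converts a cover into packings at the \emph{same} scale, so the scale-matching problem that defeats your dyadic decomposition never arises.
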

\begin{proof}
Let $\alpha<{ _{\substack{\displaystyle\supess\\ _{x\in
E}}}}\underline{\alpha}_{\nu}(x)$. Consider the set
$$
F=\left\{x\in E\;\Biggl|\;\; \liminf_{r\to 0}\frac{\log \nu\big(B(x,
r)\big)}{\log r}>\alpha\right\}.
$$
It is clear that $\pi(F)>0$. For all $x\in E$ we can find $\delta>0$
such that for all $0<r<\delta$, we have $$\nu\big(B(x,r)\big)\leq
r^\alpha.$$ Now, let $(F_j)_j$ be a countable partition  of $F$. We
put forward the set
$$
F_{j_p}=\left\{x\in F_j\;\Biggl|\;\;r<\frac1p,\;\;
\nu\big(B(x,r)\big)\leq r^\alpha\right\}.
$$
Fix $p\in \mathbb{N}$ and  $G$ be a subset of $F_{j_p}$. Let
$0<r<\min(\delta,\frac1p)$ and
$\Big(B_i=B(x_i,r)\Big)_{i\in\{1,...,N_r(G)\}}$ be a centered
covering of $G$, then
$$
N_{\nu,r}^{1}(G)(2r)^0\leq \sum_i \nu(B_i)\leq 2^{-\alpha} N_r(G)
(2r)^\alpha.
$$
This clearly implies that
$${\mathsf
L}_{\nu}^{1,0}(G)\leq2^{-\alpha}{{\mathsf
L}}^{\alpha}(G)\leq2^{-\alpha}\overline{{\mathsf
H}}^{\alpha}(G)\leq2^{-\alpha} \overline{\mathsf
H}^{\alpha}(F_{j_p})
$$
and
$$
\overline{\mathsf
H}_{\nu}^{1,0}(F_{j_p})\leq2^{-\alpha} \overline{\mathsf
H}^{\alpha}(F_{j_p}).
$$
Since $F_j=\bigcup_p F_{j_p}$ for all $j$ and $\pi\big(F\big)>0$, by
making $\delta\to 0$, we obtain
$$
0<{\mathsf H}_{\nu}^{1,0}(F)\leq\sum_j\sum_p\overline{\mathsf
H}_{\nu}^{1,0}(F_{j_p})\leq2^{-\alpha}\sum_j\sum_p\overline{\mathsf
H}^{\alpha}(F_{j_p})
$$
which implies that
$$
0<{\mathsf H}^{\alpha}(F)\leq{\mathsf H}^{\alpha}(E).
$$
 We therefore conclude that
$$
\underline{\dim}_{MB}(E)\geq \alpha\quad \text{for all}\quad
\alpha<{ _{\substack{\displaystyle\supess\\ _{x\in E}}}}
\underline{\alpha}_{\nu}(x).
$$

 \bigskip
Let $\alpha<{ _{\substack{\displaystyle\supess\\ _{x\in
E}}}}\overline{\alpha}_{\nu}(x)$. Consider the set
$$
F=\left\{x\in E\;\Biggl|\;\; \limsup_{r\to 0}\frac{\log
\nu\big(B(x,r)\big)}{\log r}>\alpha\right\},
$$
then $\pi(F)>0$. Fix $G\subset F$, then for all $x\in G$ and all
$\delta>0$, we can find a positive real number $0<r_x<\delta$ such
that
$$
\nu\big(B(x,r_x)\big)\leq r_x^\alpha.
$$
The family $\Big(B(x,r_x)\Big)_{x\in G}$ is a centered
$\delta$-covering of $ \overline{G}.$ Then, we can choose a finite
subset $J$ of $\N$ such that the family
$\Big(B(x_i,r_{x_i})\Big)_{i\in J}$ is a centered $\delta$-covering
of $G$. Take $r = \max\{ r_{x_i},\;{i\in J}\}$, then $\Big(B(x_i ,
r)\Big)_{ i\in J}$  is a centered covering of $G.$ By using
Besicovitch's covering theorem, we can construct $\xi_n$ finite
sub-families $\Big(B(x_{1j},r)\Big)_j$,....,$\Big(B(x_{\xi_n
j},r)\Big)_j$ such that each  $$
G\subseteq\displaystyle\bigcup_{i=1}^{\xi_n}\bigcup_jB(x_{ij},r),$$
$\Big(B_{ij}=B(x_{ij},r)\Big)_j$ is a packing of $G$ and
$\nu\big(B_{ij}\big)\leq r^\alpha$. Then we conclude from this that
$$
N_{\nu,r}^{1}(G)(2r)^0\leq \sum_{i,j} \nu(B_{i,j})\leq
2^{-\alpha}\xi_n M_r(G) (2r)^\alpha.
$$
Which implies that
$$
{\mathsf L}_{\nu}^{1,0}(G)\leq2^{-\alpha}\xi_n\overline{{\mathsf
P}}^{\alpha}(G)\leq2^{-\alpha} \xi_n\overline{\mathsf P}^{\alpha}(F)
$$
and
$$
\overline{\mathsf H}_{\nu}^{1,0}(F)\leq2^{-\alpha}
\xi_n\overline{\mathsf P}^{\alpha}(F).
$$
Assume that $F=\bigcup_iF_i$, then
$$
0<{\mathsf H}_{\nu}^{1,0}(F)\leq\sum_i\overline{\mathsf
H}_{\nu}^{1,0}(F_i)\leq2^{-\alpha} \xi_n\sum_i\overline{\mathsf
P}^{\alpha}(F_i).
$$
Finally, we conclude that $${\mathsf P}^{\alpha}(E)\geq{\mathsf
P}^{\alpha}(F)>0\quad\text{ and}\quad \overline{\dim}_{MB}(E)\geq
\alpha.$$ This completes the proof of Theorem \ref{thess}.
\end{proof}

\subsection{Proof of Theorems \ref{Jcole} and
\ref{ah1}}\label{Proofs} We present some intermediate results, which
will be used in the proof of Theorems \ref{Jcole} and \ref{ah1}. Let
$ \mu$ be a compactly supported Borel probability measure and $\nu$
be a finite Borel measure on $\mathbb{R}^n$ with
${K}=:\supp\mu=\supp\nu$. For $\zeta=(\mu,\nu)$, $E \subseteq {K}$
and $q,t\in \mathbb{R}$, we define
 $$
\overline{ {\mathsf P}}_\zeta^{q,t}(E)= \limsup_{r\to0}
M_{\zeta,r}^q(E) (2r)^t,
$$
where
$$
M_{\zeta,r}^q(E)=\sup \left\{\displaystyle \sum_i
\mu(B(x_i,r))^q\nu(B(x_i,r))\;\Big|\;\Big(B(x_i,r)\Big)_i\;\text{is
a centered packing of}\; E \right\}.
$$
Next we define the measure ${\mathsf{P}}_\zeta^{q,t}$ by
$$
{\mathsf{P}}_\zeta^{q,t}(E)=\inf \left\{\displaystyle
\sum_i{\overline{\mathsf P}}_\zeta^{q,t}(E_i)\;\Big|\;
E\subseteq\bigcup_i E_i\; \right\}.
 $$
In a similar way we define
$$
{\mathsf L}_\zeta^{q,t}(E)= \liminf_{r\to0} N_{\zeta,r}^q(E) (2r)^t,
$$
where
$$
N_{\zeta,r}^q(E)=\inf \left\{\displaystyle \sum_i
\mu(B(x_i,r))^q\nu(B(x_i,r))\;\Big|\;\Big(B(x_i,r)\Big)_i\;\text{is
a centered covering of}\; E \right\}.
$$
Since ${\mathsf L}_\zeta^{q,t}$  is  not increasing and not
countably sub-additive, one needs a standard modification to get an
outer measure. Hence, we modify the definition as follows
 $$
\overline{\mathsf{H}}_\zeta^{q,t}(E)=\sup_{F\subseteq E}
{\mathsf{L}}_\zeta^{q,t}(F)
 $$
and
$$
\mathsf{H}_\zeta^{q,t}(E)=\inf \left\{\displaystyle
\sum_i\overline{\mathsf H}_\zeta^{q,t}(E_i)\;\Big|\;
E\subseteq\bigcup_i E_i\; \right\}.
$$

The functions ${{\mathsf H}}^{q,t}_{\zeta}$ and ${{\mathsf
P}}^{q,t}_{\zeta}$ are metric outer measures. In addition, those
measures assign, in the usual way, a multifractal dimension to each
subset $E$ of $\mathbb{R}^n$. They are respectively denoted by
$\mathsf{b}_{\zeta}^{q}(E)$, $\mathsf{B}_{\zeta}^{q}(E)$ and
$\mathsf{\Delta}_{\zeta}^{q}(E)$. More precisely, we have
 $$
\begin{array}{lllcr}
\mathsf{b}_{\zeta}^{q}(E) &=&\inf \Big\{ t\in \mathbb{R}\;\bigl|\;\;
{\mathsf H}^{q,t}_{\zeta}(E) =0\Big\}, \\
\\\mathsf{B}_{\zeta}^{q}(E) &=& \inf \Big\{ t\in \mathbb{R}\;\bigl|\;\;\;
{\mathsf
P}^{q,t}_{\zeta}(E) =0\Big\}, \\\\
\mathsf{\Delta}_{\zeta}^{q}(E) &=& \inf \Big\{ t\in
\mathbb{R}\;\bigl| \quad \overline{{\mathsf P}}^{q,t}_{\zeta}(E)
=0\Big\}.
\end{array}
 $$
Observing that
 $$
\mathsf{b}_{\zeta}^{q}(E) \leq \mathsf{B}_{\zeta}^{q}(E) \leq
\mathsf{\Delta}_{\zeta}^{q}(E).
 $$
Next, we define the multifractal dimension functions
$\mathsf{b}_{\zeta}$, $\mathsf{B}_{\zeta}$ and
$\mathsf{\Delta}_{\zeta}$\;: $\mathbb{R}\rightarrow
[-\infty,+\infty]$ by
$$
\begin{array}{lll}
\mathsf{b}_{\zeta}:\;q\rightarrow \mathsf{b}_{\zeta}^{q}(K), \quad
\mathsf{B}_{\zeta}:\;q\rightarrow
\mathsf{B}_{\zeta}^{q}(K)\quad\text{and}\quad
\mathsf{\Delta}_{\zeta}:\;q\rightarrow\mathsf{\Delta}_{\zeta}^{q}(K).
\end{array}
$$
\begin{proposition}\label{P1} Set $f(t)=\mathsf{B}_{\zeta}({t})$ and suppose that
$f(0)=0$ and $\pi(K)>0$. Then
$$
\pi\left(\Big[E_{\mu}\big(-f_+'(0),-f_-'(0)\big)\Big]^{^{C}}\right)=0,
$$
where $f_-'$ and $f_+'$ are the left and right hand sides
derivatives of the function $f$.
\end{proposition}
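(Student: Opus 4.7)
The plan is to split the complement of $E_\mu(-f_+'(0),-f_-'(0))$ into
$$A := \{x \in K : \underline{\alpha}_\mu(x) < -f_+'(0)\} \quad \text{and} \quad B := \{x \in K : \overline{\alpha}_\mu(x) > -f_-'(0)\},$$
and further decompose $A = \bigcup_k A_k$, $B = \bigcup_k B_k$ with $A_k = \{x \in K : \underline{\alpha}_\mu(x) < -f_+'(0) - 1/k\}$ and $B_k = \{x \in K : \overline{\alpha}_\mu(x) > -f_-'(0) + 1/k\}$. It then suffices to show $\pi(A_k) = \pi(B_k) = 0$ for every $k \in \mathbb{N}$; the two arguments are symmetric, using $t>0$ small and $t<0$ small in modulus respectively.

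Fix $k$. The key ingredient is the convexity of $f = \mathsf{B}_\zeta$ (the $\zeta$-analogue of Proposition \ref{Prop1}(4)), which together with $f(0)=0$ guarantees that the one-sided derivatives $f_\pm'(0)$ exist and satisfy $f(t) - t f_+'(0) = o(t)$ as $t \to 0^+$, and $f(t) - t f_-'(0) = o(t)$ as $t \to 0^-$. I would pick $t>0$ small enough that $f(t) - t f_+'(0) < t/(2k)$ and set $\delta := t/(2k) > 0$; a direct manipulation (combining $\mu(B(x,r)) > r^{-f_+'(0)-1/k}$ with $f(t) + \delta < t(f_+'(0) + 1/k)$) then converts the defining condition of $A_k$ into the pointwise estimate
$$\mu(B(x, r))^t \, r^{f(t) + \delta} > 1 \qquad \text{for arbitrarily small } r.$$
This is precisely the type of local inequality handled by the Besicovitch comparison used in Theorem \ref{thess}. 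For any $F \subseteq A_k$ and any $n \in \mathbb{N}$, I would select a witnessing $r_x \in (0, 1/n)$ at each $x \in F$, extract a finite centered subfamily, pass to the common scale $r := \max_i r_{x_i}$, and invoke Besicovitch's theorem to split the resulting uniform-scale cover of $F$ into $\xi_n$ disjoint packings. Using the pointwise inequality to replace each $\nu(B(x_i, r))$ by $\mu(B(x_i, r))^t \, r^{f(t)+\delta} \, \nu(B(x_i, r))$, summing, and applying the Method-I passage on both sides, one arrives at
$$\pi(A_k) \;\leq\; C\,\xi_n\, \mathsf{P}_\zeta^{t, f(t)+\delta}(A_k).$$
Since $f(t) + \delta > f(t) = \mathsf{B}_\zeta(t)$, the right-hand side is zero, and therefore $\pi(A_k) = 0$. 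The entirely analogous argument with $t < 0$ and $\delta = |t|/(2k)$, invoking the left-derivative identity $f(t) - t f_-'(0) = o(t)$ as $t \to 0^-$, yields $\pi(B_k) = 0$.

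The main obstacle is the scale-matching in the Besicovitch step: the pointwise inequality $\mu(B(x, r_x))^t r_x^{f(t) + \delta} > 1$ holds at the individually chosen $r_x$, whereas the pre-measure $\overline{\mathsf{P}}_\zeta^{t, f(t) + \delta}$ is built from single-scale packings. When $f(t) + \delta \geq 0$, monotonicity of $\mu(B(x, \cdot))$ and of $s \mapsto s^{f(t) + \delta}$ immediately transports the inequality to the common radius $r = \max_i r_{x_i}$; in the opposite sign regime (which can occur when $f_+'(0)$ is very negative), one instead groups the $r_x$ into dyadic annuli $[2^{-(m+1)}, 2^{-m})$ and applies Besicovitch separately at each scale, summing the resulting bounds. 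This is the only nontrivial technical point; the remainder is a routine elaboration of the estimates developed in Section \ref{Estimates}.
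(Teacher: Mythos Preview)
Your approach is essentially the same as the paper's: decompose the complement into the $\underline{\alpha}_\mu$-part and the $\overline{\alpha}_\mu$-part, use the one-sided derivative of the convex function $f=\mathsf{B}_\zeta$ to select a parameter $t$ with $f(t)$ strictly below the linear comparison value, and then run the Besicovitch argument from Theorem~\ref{thess} to dominate $\pi=\mathsf{H}_\nu^{1,0}$ by $\mathsf{P}_\zeta^{t,\,f(t)+\delta}$, which vanishes since $f(t)+\delta>\mathsf{B}_\zeta(t)$. The only cosmetic differences are that the paper spells out the $\overline{\alpha}_\mu>-f_-'(0)$ half (you do the $\underline{\alpha}_\mu<-f_+'(0)$ half), and the paper first fixes a countable cover $(E_j)_j$ of $K$ with $\overline{\mathsf{P}}_\zeta^{-t,\alpha t}(E_j)=0$ before bounding each $\overline{\mathsf{H}}_\nu^{1,0}(F\cap E_j)$, whereas you phrase this as a single ``Method-I passage on both sides''; you should make that step explicit, since the comparison is really at the level of the pre-measures $\overline{\mathsf{H}}_\nu^{1,0}$ and $\overline{\mathsf{P}}_\zeta^{t,\cdot}$ on each piece. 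Your attention to the scale-matching issue (transporting the pointwise inequality to the common radius $r=\max_i r_{x_i}$) is in fact more careful than the paper's own treatment, and the dyadic-annuli fix you suggest for the case $f(t)+\delta<0$ is the standard way to handle it.
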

\begin{proof} We must now prove that
$$
\pi\left(\Big\{x\in K\;\Bigl|\;\;
\overline{\alpha}_{\mu}(x)>-f_-'(0)\Big\}\right)=0.
$$
The proof of the statement
$$
\pi\left(\Big\{x\in
K\;\Bigl|\;\;\underline{\alpha}_{\mu}(x)<-f_+'(0)\Big\}\right)=0
$$
is identical to the proof of the statement in the first part and is
therefore omitted.

\bigskip
Fix $\alpha> -f_-'(0)$, then there exist $\beta,t>0$ such that
$\alpha>\beta> -f_-'(0)$ and $f(-t)< \beta t$. It follows
immediately from this that $ {\mathsf P}^{\;-t,  \beta
t}_{\zeta}(K)=0.$ We can choose a countable partition $(E_j)_j$ of
$K$, i.e., $K=\cup_j E_j$ such that
$$
\sum_j\overline{\mathsf P}^{\;-t,  \beta t}_{\zeta}(E_j)\leq 1
\quad\text{and}\quad\overline{\mathsf P}^{\;-t,  \alpha
t}_{\zeta}(E_j)=0,\;\; \forall\;j.
$$
Now put
$$
F=\left\{x\in K\;\Biggl|\;\; \limsup_{r\to0} \frac{\log
\mu\big(B(x,r)\big)}{\log r}>\alpha\right\}.
$$
If $x \in F$, for all $\delta>0$, there exists $0<r_x<\delta$ such
that
$$
\mu\big(B(x,r)\big)\leq r_x^\alpha.
$$
Fix $F_j=E_j\cap F$ and $G_j\subset F_j$. For all $\delta>0$ and all
$j$, the family $\Big(B(x_{j}, r_{x_{j}})\Big)_{x_j\in G_j}$ is a
centered $\delta$-covering of $\overline{G}_j$. Then, we can choose
a finite subset $J$ of $\N$ such that the family
$\Big(B(x_{j_i},r_{j_i})\Big)_{i\in J}$ is a centered
$\delta$-covering of $G_j$. Take $r_j = \max\{ r_{j_i},\;{i\in
J}\}$, then $\Big(B(x_{j_i} , r_j)\Big)_{ i\in J}$  is a centered
covering of $G_j.$ From Besicovitch's covering theorem, we can
construct $\xi_n$ finite sub-families
$\Big(B(x_{j_{1k}},r_{j})\Big)_k$,....,$\Big(B(x_{j_{\xi_n
k}},r_{j})\Big)_k$ such that each  $$
G_j\subseteq\displaystyle\bigcup_{i=1}^{\xi_n}\bigcup_kB(x_{j_{ik}},r_{j}),$$
$\Big(B_{j_{ik}}=B(x_{j_{ik}},r_{j})\Big)_k$ is a packing of $G_j$
and $\mu\big(B_{j_{ik}}\big)\leq r_j^\alpha$. We have
\begin{eqnarray*}
N_{\nu,r_j}^{1}(G_j)(2r_j)^0\leq \sum_{i,k}\nu
(B_{j_{ik}})&=&\sum_{i,k}\mu\big(B_{j_{ik}}\big)^{-t}\mu\big(B_{j_{ik}}\big)^t\nu
\big(B_{j_{ik}}\big)\\
&\leq&2^{-\alpha t}\sum_{i,k}\mu\big(B_{j_{ik}}\big)^{-t}\nu
\big(B_{j_{ik}}\big)(2r_j)^{\alpha t}.
\end{eqnarray*}
We deduce that
$$
{\mathsf L}_{\nu}^{1,0}(G_j)\leq2^{-\alpha t}\xi_n\overline{\mathsf
P}^{\;-t, \alpha t}_{\zeta}(E_j).
$$
It follows that
$$
\overline{\mathsf H}^{1, 0}_{\nu}(F_j)\leq2^{-\alpha
t}\xi_n\overline{\mathsf P}^{\;-t, \alpha t}_{\zeta}(E_j)=0.
$$
Finally, we immediately conclude that $$\pi(F)={\mathsf H}^{1,
0}_{\nu}(F)\leq\sum_j\overline{\mathsf H}^{1, 0}_{\nu}(F_j)=0.$$
This completes the proof of Proposition \ref{P1}.
\end{proof}
\begin{proposition}\label{P2} With the same notations and hypotheses as in the previous
proposition, we have
$$
\underline{\dim}_{MB}\Big(E_{\mu}\big(-f_+'(0),-f_-'(0)\big)\Big)\geq
\inf\left\{ \underline{\alpha}_{\nu}(x)\;\Biggl|\;\;x\in
E_{\mu}\Big(-f_+'(0),-f_-'(0)\Big) \right\}
$$
and
$$
\overline{\dim}_{MB}\Big(E_{\mu}\big(-f_+'(0),-f_-'(0)\big)\Big)\geq
\inf\left\{ \overline{\alpha}_{\nu}(x)\;\Biggl|\;\;x\in
E_{\mu}\Big(-f_+'(0),-f_-'(0)\Big) \right\}.
$$
\end{proposition}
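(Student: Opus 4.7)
The plan is to derive Proposition \ref{P2} as an essentially immediate consequence of Proposition \ref{P1} and Theorem \ref{thess}. Write $E = E_{\mu}\bigl(-f_+'(0),-f_-'(0)\bigr)$ for brevity. The goal is first to show that $\pi(E) > 0$, in order to activate the lower-bound machinery of Section \ref{Estimates}, and then to translate the essential-supremum bound produced by that machinery into the pointwise infimum that appears in the statement.

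First I would invoke Proposition \ref{P1} with the measure $\zeta = (\mu,\nu)$ and the function $f = \mathsf{B}_\zeta$: this gives $\pi(K \setminus E) = 0$. Combined with the standing hypothesis $\pi(K) > 0$, this produces $\pi(E) = \pi(K) > 0$, which is the only genuinely nontrivial input required. This step is really the backbone of the argument, since it transfers the mass of $\pi$ into the set $E$ whose dimension we want to bound.

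Next, since $E \subseteq \mathscr{K}$ and $\pi(E) > 0$, Theorem \ref{thess} applies directly and yields
$$\underline{\dim}_{MB}(E) \geq \supess_{x \in E} \underline{\alpha}_\nu(x), \qquad \overline{\dim}_{MB}(E) \geq \supess_{x \in E} \overline{\alpha}_\nu(x),$$
with the essential suprema taken with respect to $\pi$. To finish, I would use the elementary observation that whenever $\pi(A) > 0$ and $g : A \to [-\infty,+\infty]$ is measurable, one has $\inf_{x \in A} g(x) \leq \supess_{x \in A} g(x)$, because the bound $g(x) \leq \supess_{x \in A} g$ holds on a $\pi$-full (hence nonempty) subset of $A$, so the pointwise infimum over $A$ cannot exceed the essential supremum. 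Applying this with $A = E$ and $g = \underline{\alpha}_\nu$, then $g = \overline{\alpha}_\nu$, upgrades both estimates to the pointwise infima claimed in the proposition.

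No serious obstacle is anticipated: the two load-bearing ingredients (the concentration statement in Proposition \ref{P1} and the lower-dimensional bound in Theorem \ref{thess}) have already been proved in the preceding subsections, and the final step comparing the essential supremum with the pointwise infimum is standard measure-theoretic boilerplate. If any subtlety arises, it would be purely bookkeeping, namely checking that $E$ lies in the domain of applicability of Theorem \ref{thess} (which it does, as $E \subseteq K = \mathscr{K}$) and that the two uses of $\pi$-essential supremum in Theorem \ref{thess} are phrased with respect to the same outer measure used in Proposition \ref{P1} (which they are, by construction).
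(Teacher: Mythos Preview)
Your proposal is correct and follows precisely the approach of the paper, which simply states that the result ``follows immediately from Theorem \ref{thess} and Proposition \ref{P1}.'' You have faithfully unpacked that sentence: Proposition \ref{P1} gives $\pi(E)>0$, Theorem \ref{thess} then yields the essential-supremum lower bounds, and the elementary inequality $\inf\le\supess$ (valid since $\pi(E)>0$) finishes the job.
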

\begin{proof}
It follows immediately from Theorem \ref{thess} and Proposition
\ref{P1}.
\end{proof}

\bigskip\bigskip\bigskip
{\it Proof of Theorems \ref{Jcole} and \ref{ah1}.} We can now prove
Theorems \ref{Jcole} and \ref{ah1}. For $q\leq0$, take
$$ \nu\big(B(x,r)\big)= \mu\big(B(x,r)\big)^q (2r)^{{\mathsf
B}_{\mu}(q)}.$$ By a simple calculation, we get
$$\mathsf{B}_{\zeta}({t})=\mathsf{B}_{\mu}(q+t)-\mathsf{B}_{\mu}(q)$$ and if $x \in
\underline{E}_{\mu}(\alpha)$, we have
$$
\underline{\alpha}_{\nu}(x)=q
\underline{\alpha}_{\mu}(x)+\mathsf{B}_{\mu}(q)\leq
q\alpha+\mathsf{B}_{\mu}(q).
$$
Theorem \ref{thinf} clearly implies that
$$
\overline{\dim}_{MB}\Big(\underline{E}_{\mu}(\alpha)\Big)\leq
\inf_{q\leq0} q\alpha+\mathsf{B}_{\mu}(q).
$$
The proof of the statement
$$
\overline{\dim}_{MB}\Big(\overline{E}_{\mu}(\alpha)\Big)\leq
\inf_{q\geq0} q\alpha+\mathsf{B}_{\mu}(q)
$$
is very similar to the proof of the first statement and is therefore
omitted.

\bigskip\bigskip
We now turn to lower bound theorems. If moreover we suppose that
$\mathsf{H}_{\mu}^{q,\mathsf{B}_{\mu}(q)}\big( K\big)>0,$ then
clearly we have $\pi(K)>0$. By taking Proposition \ref{P1} into
consideration, we get
$$
\pi\Big(E_{\mu}\big(-\mathsf{B}_{\mu_+}'(q),-\mathsf{B}_{\mu_-}'(q)\big)\Big)>0.
$$
It follows immediately from Proposition \ref{P2} that
$$
\underline{\dim}_{MB}\Big(E_{\mu}\big(-\mathsf{B}_{\mu_+}'(q),-\mathsf{B}_{\mu_-}'(q)\big)\Big)\geq
\left\{
  \begin{array}{ll}
  -q\mathsf{B}_{\mu_+}'(q)+\mathsf{B}_{\mu}(q), & \hbox{if } q\geq0 \\\\
-q\mathsf{B}_{\mu_-}'(q)+\mathsf{B}_{\mu}(q), & \hbox{if } q\leq0.
  \end{array}
\right.
$$
Finally, it follows from this and since $\mathsf{B}_{\mu}$ is
differentiable at $q$ that
 $$
\underline{\dim}_{MB}
\Big(E_{\mu}(-\mathsf{B}'_{\mu}(q))\Big)=\overline{\dim}_{MB}
\Big(E_{\mu}(-\mathsf{B}'_{\mu}(q))\Big)=
\mathsf{B}^*_{\mu}\big(-\mathsf{B}'_{\mu}(q)\big)=\mathsf{b}^*_{\mu}\big(-\mathsf{B}'_{\mu}(q)\big),
 $$
which yields the desired result.

\begin{theorem}
The previous results hold if we replace the multifractal function
$\mathsf{B}_{(.)}$ by the function $\mathsf{\Delta}_{(.)}$.
\end{theorem}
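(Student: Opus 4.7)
The plan is to mirror the route taken in Subsection~\ref{Proofs} with the measure-level dimension function $\mathsf{B}_{(.)}$ replaced throughout by the pre-measure-level dimension function $\mathsf{\Delta}_{(.)}$. The key simplification is that an inequality of the form $\mathsf{\Delta}_\zeta(-t) < \beta t$ already forces the pre-measure itself to vanish, namely $\overline{\mathsf{P}}^{-t,\beta t}_\zeta(K)=0$, so the Method~I partition step used in the original proof of Proposition~\ref{P1} can be omitted entirely.

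The first step is to state and prove the $\mathsf{\Delta}$-analog of Proposition~\ref{P1}: if $g(t) = \mathsf{\Delta}_\zeta(t)$ satisfies $g(0) = 0$ and $\pi(K) > 0$, then $\pi\big([E_\mu(-g'_+(0), -g'_-(0))]^{C}\big) = 0$. I would run the original argument line by line, but after fixing $\alpha > -g'_-(0)$ and choosing $\beta,t>0$ with $\alpha > \beta > -g'_-(0)$ and $g(-t)<\beta t$, no partition $(E_j)_j$ is needed: the bound $\overline{\mathsf{P}}^{-t,\alpha t}_\zeta(G) \leq \overline{\mathsf{P}}^{-t,\alpha t}_\zeta(K) = 0$ is automatic for every subset $G$ of $F = \{x\in K : \limsup_{r\to 0}\log\mu(B(x,r))/\log r > \alpha\}$. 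The Besicovitch covering construction then yields $\overline{\mathsf{H}}^{1,0}_\nu(F) \leq 2^{-\alpha t}\xi_n \overline{\mathsf{P}}^{-t,\alpha t}_\zeta(F) = 0$, and hence $\pi(F)=0$; the complementary statement for $\{x:\underline{\alpha}_\mu(x) < -g'_+(0)\}$ is handled identically. The $\mathsf{\Delta}$-analog of Proposition~\ref{P2} is then an immediate consequence of Theorem~\ref{thess}.

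To obtain the $\mathsf{\Delta}$-version of Theorem~\ref{ah1}, I would take the auxiliary weight $\nu(B(x,r)) = \mu(B(x,r))^q (2r)^{\mathsf{\Delta}_\mu(q)}$. A direct calculation on centered packings and coverings gives $\mathsf{\Delta}_\zeta(t) = \mathsf{\Delta}_\mu(q+t) - \mathsf{\Delta}_\mu(q)$ (so that $g(0)=0$) together with the identity $\mathsf{H}^{1,0}_\nu(E) = \mathsf{H}^{q,\mathsf{\Delta}_\mu(q)}_\mu(E)$, so the hypothesis $\mathsf{H}^{q,\mathsf{\Delta}_\mu(q)}_\mu(K) > 0$ reads off as precisely $\pi(K) > 0$. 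At every $x \in E_\mu(-\mathsf{\Delta}'_{\mu_+}(q),-\mathsf{\Delta}'_{\mu_-}(q))$ one computes $\underline{\alpha}_\nu(x) \geq -q\mathsf{\Delta}'_{\mu_+}(q) + \mathsf{\Delta}_\mu(q)$ when $q \geq 0$ (and the symmetric inequality $-q\mathsf{\Delta}'_{\mu_-}(q) + \mathsf{\Delta}_\mu(q)$ when $q \leq 0$); applying the $\mathsf{\Delta}$-analog of Proposition~\ref{P2} delivers the required lower bound on $\underline{\dim}_{MB}$, and differentiability at $q$ forces equality with the upper bound exactly as in the original proof. The upper-bound half of Theorem~\ref{Jcole} with $\mathsf{\Delta}$ in place of $\mathsf{B}$ is immediate from $\mathsf{B}_\mu \leq \mathsf{\Delta}_\mu$ (which gives $\mathsf{B}^*_\mu \leq \mathsf{\Delta}^*_\mu$); alternatively one reproves it directly from Theorem~\ref{thinf} with the same $\nu$, using that $\mathsf{\Delta}^1_\nu(E) \leq 0$ implies $\mathsf{B}^1_\nu(E)\leq 0$.

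The only genuine point requiring care is verifying that the pre-measure identity $\overline{\mathsf{P}}^{-t,\alpha t}_\zeta(G) \leq \overline{\mathsf{P}}^{-t,\alpha t}_\zeta(K) = 0$ really absorbs the role of the countable partitioning argument in Proposition~\ref{P1}. It does, because the Besicovitch-based estimate is applied subset-by-subset and every subset of a set of zero pre-measure has zero pre-measure; beyond this, the adaptation to $\mathsf{\Delta}$ is strictly a matter of notational bookkeeping.
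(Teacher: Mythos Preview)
Your proposal is correct and matches the paper's (implicit) approach: the paper states this theorem without proof, clearly intending the reader to rerun the arguments of Propositions~\ref{P1}--\ref{P2} and the proof of Theorems~\ref{Jcole}--\ref{ah1} with $\mathsf{\Delta}_{(.)}$ in place of $\mathsf{B}_{(.)}$, which is exactly what you do. Your observation that the countable partition $(E_j)_j$ in the proof of Proposition~\ref{P1} becomes unnecessary---because $\mathsf{\Delta}_\zeta(-t)<\beta t$ forces the pre-measure $\overline{\mathsf{P}}^{-t,\beta t}_\zeta(K)$ itself to vanish---is a correct and welcome simplification that falls out naturally from the definitions.
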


\subsection{A result for which the Hewitt-Stromberg dimensions
differ}\label{differ} We now focus on the case when the upper and
the lower Hewitt-Stromberg dimension functions do not necessarily
coincide, i.e., $\mathsf{B}_{\mu}(q)\neq \mathsf{b}_{\mu}(q)$ for
$q\neq 1$. Consider the sets, for all $\alpha,\beta\geq0$
\begin{eqnarray*}
E(\alpha,\beta)=\Big\{x\in
K\;\Bigl|\;\;\underline{\alpha}_{\mu}(x)\leq \alpha
\;\;\text{and}\;\;\beta\leq\overline{\alpha}_{\mu}(x)\Big\}\quad\text{and}\quad
E(\alpha)=E(\alpha,\alpha).
\end{eqnarray*}
For a real function $\varphi$, we set
$$
\varphi^-(q)=\limsup_{t\to0}\frac{\varphi(q-t)-\varphi(q)}{-t}
$$
and
$$
\varphi^+(q)=\limsup_{t\to0}\frac{\varphi(q+t)-\varphi(q)}{t}.
$$
\begin{theorem}\label{th4}
For $q\in \mathbb{R}$, assume that
$\mathsf{H}_{\mu}^{q,\mathsf{b}_{\mu}(q)}\big( K\big)>0,$ then we
have
$$
\overline{\dim}_{MB}\Big(E\big(-\mathsf{b}_{\mu}^-(q),\;
-\mathsf{b}_{\mu}^+(q) \big)\Big)\geq \left\{
  \begin{array}{ll}
  -q\mathsf{b}_{\mu}^+(q)+\mathsf{b}_{\mu}(q), & \hbox{if } q\geq0, \\\\
-q\mathsf{b}_{\mu}^-(q)+\mathsf{b}_{\mu}(q), & \hbox{if } q\leq0.
  \end{array}
\right.
$$
In addition, if $\mathsf{b}_{\mu}$ is differentiable at $q$, one has
 $$
\overline{\dim}_{MB}\Big(E\big(-\mathsf{b}_{\mu}'(q)\big)\Big)\geq
-q\mathsf{b}_{\mu}'(q)+\mathsf{b}_{\mu}(q).
 $$
\end{theorem}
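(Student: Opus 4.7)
The plan is to run the strategy of Theorem \ref{ah1} and Propositions \ref{P1}--\ref{P2}, with $\mathsf{B}_\mu$ and $\mathsf{P}_\mu^{q,t}$ replaced by $\mathsf{b}_\mu$ and $\mathsf{H}_\mu^{q,t}$, and with one-sided derivatives replaced by the upper Dini derivatives $\mathsf{b}_\mu^\pm$. I begin by setting $\nu(B(x,r)) = \mu(B(x,r))^q(2r)^{\mathsf{b}_\mu(q)}$ and $\zeta=(\mu,\nu)$. The identity $N_{\nu,r}^1(E) = (2r)^{\mathsf{b}_\mu(q)} N_{\mu,r}^q(E)$ gives $\pi(K) = \mathsf{H}_\nu^{1,0}(K) = \mathsf{H}_\mu^{q,\mathsf{b}_\mu(q)}(K) > 0$ by hypothesis, while a direct computation yields $\mathsf{b}_\zeta(t) = \mathsf{b}_\mu(q+t) - \mathsf{b}_\mu(q)$, so $\mathsf{b}_\zeta(0)=0$ and $\mathsf{b}_\zeta^\pm(0) = \mathsf{b}_\mu^\pm(q)$.

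The next step is to establish the $\mathsf{b}$-analogue of Proposition \ref{P1}: starting from $\pi(K)>0$, I will show
$$\pi\big(\{x\in K : \underline{\alpha}_\mu(x) > -\mathsf{b}_\mu^-(q)\}\big) = 0 \quad\text{and}\quad \pi\big(\{x\in K : \overline{\alpha}_\mu(x) < -\mathsf{b}_\mu^+(q)\}\big) = 0,$$
so that $\pi(E(-\mathsf{b}_\mu^-(q),-\mathsf{b}_\mu^+(q))) = \pi(K) > 0$. For the first equality, given $\alpha > -\mathsf{b}_\mu^-(q)$ the $\limsup$ definition of $\mathsf{b}_\mu^-(q)$ produces some $t>0$ with $\mathsf{b}_\zeta(-t) < \alpha t$, whence $\mathsf{H}_\zeta^{-t,\alpha t}(K) = 0$. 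Choosing a cover $(E_j)_j$ of $K$ with $\sum_j \overline{\mathsf H}_\zeta^{-t,\alpha t}(E_j)$ small and restricting to $F_p = \{x : \underline{\alpha}_\mu(x) \geq \alpha,\; \mu(B(x,r)) \leq r^\alpha \text{ for } r \leq 1/p\}$, the pointwise bound $\nu(B) \leq r^{\alpha t}\mu(B)^{-t}\nu(B)$ applied to a centered covering of $F_p\cap E_j$ that nearly realizes $N_{\zeta,r}^{-t}$ gives $\mathsf{L}_\nu^{1,0}(F_p\cap E_j) \lesssim \mathsf{L}_\zeta^{-t,\alpha t}(F_p\cap E_j) = 0$; summing over $j$, then $p\to\infty$, then $\alpha\downarrow -\mathsf{b}_\mu^-(q)$ along a countable sequence, finishes it. The second claim is symmetric: for $\alpha < -\mathsf{b}_\mu^+(q)$, extract $t>0$ with $\mathsf{b}_\zeta(t) < -\alpha t$, use $\mathsf{H}_\zeta^{t,-\alpha t}(K) = 0$ together with $\mu(B(x,r)) > r^\alpha$ on $\{\overline{\alpha}_\mu(x) < \alpha\}$ and the dual pointwise inequality $\nu(B) \leq r^{-\alpha t}\mu(B)^t\nu(B)$.

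Once $\pi\big(E(-\mathsf{b}_\mu^-(q),-\mathsf{b}_\mu^+(q))\big) > 0$ is in hand, the conclusion follows the template of Proposition \ref{P2}. Using $\log(2r)/\log r \to 1$, one computes $\overline{\alpha}_\nu(x) = q\,\overline{\alpha}_\mu(x) + \mathsf{b}_\mu(q)$ when $q\geq 0$ and $\overline{\alpha}_\nu(x) = q\,\underline{\alpha}_\mu(x) + \mathsf{b}_\mu(q)$ when $q\leq 0$. Combining the relevant half of the defining inequality of $E(-\mathsf{b}_\mu^-(q),-\mathsf{b}_\mu^+(q))$ with the sign of $q$ yields $\overline{\alpha}_\nu(x) \geq -q\mathsf{b}_\mu^+(q) + \mathsf{b}_\mu(q)$ for $q\geq 0$ and $\overline{\alpha}_\nu(x) \geq -q\mathsf{b}_\mu^-(q) + \mathsf{b}_\mu(q)$ for $q\leq 0$. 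Theorem \ref{thess} then delivers the claimed lower bound on $\overline{\dim}_{MB}$; when $\mathsf{b}_\mu$ is differentiable at $q$ the two branches coincide and the set reduces to $E(-\mathsf{b}_\mu'(q))$, giving the final assertion.

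The main obstacle is the $\mathsf{H}$-side version of Proposition \ref{P1}, where two features change at once: $\mathsf{L}_\zeta$ is a $\liminf$ so one only gets control along a subsequence of $r\to 0$ (not for all small $r$ as in the $\overline{\mathsf P}_\zeta$ setup there), and $\mathsf{b}_\mu$ is not asserted to be convex so one-sided derivatives need not exist. Both difficulties dissolve: the argument of Proposition \ref{P1} only needs the existence of \emph{some} $t>0$ with $\mathsf{b}_\zeta(\pm t) < \pm\alpha t$, which is exactly what the $\limsup$ in the Dini derivatives $\mathsf{b}_\mu^\pm(q)$ supplies, and because we are bounding a $\liminf$ by another $\liminf$, the same centered covering can be re-used for both $N_{\zeta,r}^{\pm t}$ and $N_{\nu,r}^1$, so the Besicovitch refinement used in the packing-measure case is avoided.
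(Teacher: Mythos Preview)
Your proposal is correct and follows essentially the same route as the paper: the paper states that Theorem~\ref{th4} is a consequence of Theorem~\ref{thess} together with Proposition~\ref{P3}, which is precisely the $\mathsf{b}_\zeta$-analogue of Proposition~\ref{P1} that you sketch, proved by the same comparison $N_{\nu,r}^{1}\le 2^{-\alpha t}(2r)^{\alpha t}N_{\zeta,r}^{-t}$ on the uniform level sets $F_p$. One small imprecision: you write $\mathsf{L}_\zeta^{-t,\alpha t}(F_p\cap E_j)=0$, but what you actually obtain is $\mathsf{L}_\zeta^{-t,\alpha t}(F_p\cap E_j)\le \overline{\mathsf H}_\zeta^{-t,\alpha t}(E_j)$, and it is only after summing over $j$ and letting the cover be $\epsilon$-optimal that the total becomes arbitrarily small; this is exactly what the paper does and what your phrase ``summing over $j$'' is meant to capture.
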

Theorem \ref{th4} is a consequence from Theorem \ref{thess} and the
following proposition.
\begin{proposition}\label{P3}
Let $\varphi(t)=\mathsf{b}_{\zeta}({t})$ and suppose that
$\varphi(0)=0$ and $\pi(K)>0$. Then
$$
\pi\left(\Big\{x\in K\;\Bigl|\;\;
\underline{\alpha}_{\mu}(x)>-\varphi^-(0)\Big\}\right)=0
$$
and
$$
\pi\left(\Big\{x\in K\;\Bigl|\;\;
\overline{\alpha}_{\mu}(x)<-\varphi^+(0)\Big\}\right)=0.
$$

\end{proposition}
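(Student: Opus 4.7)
The plan is to adapt the argument of Proposition \ref{P1} to the lower multifractal setting, replacing the packing-type quantities $\overline{\mathsf{P}}_\zeta^{q,t}$ and $\mathsf{P}_\zeta^{q,t}$ by their Hewitt-Stromberg counterparts $\mathsf{L}_\zeta^{q,t}$, $\overline{\mathsf{H}}_\zeta^{q,t}$ and $\mathsf{H}_\zeta^{q,t}$, and replacing the one-sided derivatives of $\mathsf{B}_\zeta$ at $0$ by the upper slopes $\varphi^{\pm}(0)$ of $\varphi=\mathsf{b}_\zeta$. The two statements are symmetric, so I focus on the first.

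Given $\alpha>-\varphi^-(0)$, by definition of $\varphi^-(0)$ as a limsup one can produce $\beta$ and $t>0$ with $\alpha>\beta>-\varphi^-(0)$ such that $\varphi(-t)<\beta t$. Since $\varphi=\mathsf{b}_\zeta$, this forces $\mathsf{H}_\zeta^{-t,\beta t}(K)=0$, so a countable partition $(E_j)_j$ of $K$ can be chosen with $\sum_j \overline{\mathsf{H}}_\zeta^{-t,\beta t}(E_j)<\infty$. For any $j$ and any $G\subseteq E_j$, the monotonicity $\mathsf{L}_\zeta^{-t,\beta t}(G)\leq \overline{\mathsf{H}}_\zeta^{-t,\beta t}(E_j)<\infty$ combined with the factorisation
$$N_{\zeta,r}^{-t}(G)(2r)^{\alpha t}=\Bigl[N_{\zeta,r}^{-t}(G)(2r)^{\beta t}\Bigr](2r)^{(\alpha-\beta)t}$$
forces $\mathsf{L}_\zeta^{-t,\alpha t}(G)=0$: pick a subsequence $r_k\to 0$ realising the finite liminf of the bracketed factor; on that subsequence $(2r_k)^{(\alpha-\beta)t}\to 0$, so the product tends to $0$. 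Taking the supremum over $G$ yields $\overline{\mathsf{H}}_\zeta^{-t,\alpha t}(E_j)=0$ for every $j$.

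Now define $F_{\alpha,m}=\{x\in K\mid \mu(B(x,r))\leq r^\alpha\ \text{for all}\ 0<r<1/m\}$, so that $\{x\in K\mid \underline{\alpha}_\mu(x)>\alpha\}\subseteq \bigcup_m F_{\alpha,m}$. The core estimate is that, for any $G\subseteq F_{\alpha,m}\cap E_j$ and any centered $r$-covering $(B(x_i,r))_i$ of $G$ with $r<1/m$, the pointwise bound $\mu(B(x_i,r))^t\leq r^{\alpha t}$ gives
$$\nu(B(x_i,r))\leq 2^{-\alpha t}(2r)^{\alpha t}\mu(B(x_i,r))^{-t}\nu(B(x_i,r)).$$
Summing and taking infima yields $N_{\nu,r}^{1}(G)\leq 2^{-\alpha t}N_{\zeta,r}^{-t}(G)(2r)^{\alpha t}$. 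Passing to the liminf and then to the supremum over $G$ gives $\overline{\mathsf{H}}_\nu^{1,0}(F_{\alpha,m}\cap E_j)=0$, so by the Method I construction $\pi(F_{\alpha,m})=\mathsf{H}_\nu^{1,0}(F_{\alpha,m})\leq \sum_j\overline{\mathsf{H}}_\nu^{1,0}(F_{\alpha,m}\cap E_j)=0$. Summing over $m$ and then letting $\alpha$ decrease to $-\varphi^-(0)$ along a countable sequence completes the first claim.

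The second statement is handled by the symmetric choice: pick $\alpha<-\varphi^+(0)$, produce $\beta,t>0$ with $\alpha<\beta<-\varphi^+(0)$ and $\varphi(t)<-\beta t$, and analyse the set where $\mu(B(x,r))>r^\alpha$ for all small $r$, using the reverse inequality $\mu(B(x_i,r))^t\geq r^{\alpha t}$ to compare $N_{\nu,r}^{1}$ with $N_{\zeta,r}^{t}$. The main obstacle I anticipate is precisely the passage from $\overline{\mathsf{H}}_\zeta^{-t,\beta t}<\infty$ to $\overline{\mathsf{H}}_\zeta^{-t,\alpha t}=0$: unlike the clean $\limsup$-based step in the proof of Proposition \ref{P1}, the $\liminf$ here demands the subsequence argument sketched above, and one must verify that the bound $\mathsf{L}_\zeta^{-t,\beta t}(G)<\infty$ is available for \emph{every} subset $G\subseteq E_j$ (not just $E_j$ itself), which is where the monotonicity of $\overline{\mathsf{H}}_\zeta^{q,t}$ in the set argument is crucial.
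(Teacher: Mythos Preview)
Your proof is correct and follows essentially the same route as the paper's: both compare $N_{\nu,r}^{1}$ with $N_{\zeta,r}^{-t}$ on subsets of the level sets $F_{\alpha,m}$ via the pointwise bound $\mu(B(x_i,r))^{t}\le r^{\alpha t}$, pass to the $\mathsf{L}$-pre-measures, and conclude through the Method~I construction and countable subadditivity of $\pi$.

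The one difference is your intermediate level $\beta$: you mirror the two-step structure of Proposition~\ref{P1}, first obtaining $\overline{\mathsf{H}}_\zeta^{-t,\beta t}(E_j)<\infty$ and then running a subsequence argument to deduce $\overline{\mathsf{H}}_\zeta^{-t,\alpha t}(E_j)=0$. The paper avoids this entirely by observing that $\alpha>-\varphi^-(0)=\liminf_{t\to 0^+}\varphi(-t)/t$ already produces some $t>0$ with $\alpha t>\varphi(-t)=\mathsf{b}_\zeta(-t)$, so $\mathsf{H}_\zeta^{-t,\alpha t}(K)=0$ holds directly, and one may pick the cover $(E_j)$ at the level $\alpha t$ from the start. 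Your subsequence argument is valid (the liminf is witnessed along a sequence on which the extra factor $(2r_k)^{(\alpha-\beta)t}$ vanishes), so the detour is harmless; but the ``obstacle'' you anticipate dissolves once one notices that the exponent $\alpha t$ itself already exceeds the critical value, making the $\beta$-step superfluous in this $\liminf$ setting.
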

\begin{proof}
We will prove the first assertion.  The proof of the second
statement is identical to the proof of the statement in the first
part and is therefore omitted.

\bigskip
Let $\alpha>-\varphi^-(0)=\liminf_{t\to0}\frac{\varphi(-t)}{t}$,
then there exists $t>0$ such that $\alpha t>\varphi(-t)$. It is
clear that $ {\mathsf H}^{\;-t,  \alpha t}_{\zeta}(K)=0.$ Consider
the following set
$$
E=\left\{x\in K\;\Biggl|\;\; \liminf_{r\to0} \frac{\log
\mu\big(B(x,r)\big)}{\log r}>\alpha\right\}.
$$
For all $x\in E$, we can therefore choose $\delta>0$ such that, for
all $0<r<\delta$, we have
$$
\mu\big(B(x,r)\big)<r^\alpha.
$$
Now, let $(E_j)_j$ be a countable partition  of $E$. We put forward
the set
$$
E_{m_j}=\left\{x\in E_j\;\Biggl|\;\; \text{for all}\; r<
\frac1m,\;\;\mu\big(B(x,r)\big)<r^\alpha\right\}.
$$
Fix $F\subset E_{m_j}$ and $0<r<\min(\delta,\frac1m)$.  Let
$\Big(B_i=B(x_i,r)\Big)_i$ be a centered covering of $F$. Then
\begin{eqnarray*}
N_{\nu,r}^{1}(F)(2r)^0\leq \sum_i\nu
(B_{i})&=&\sum_i\mu\big(B_i\big)^{-t}\mu\big(B_i\big)^t\nu
\big(B_i\big)\\
&\leq&2^{-\alpha t}\sum_i\mu\big(B_i\big)^{-t}\nu
\big(B_i\big)(2r)^{\alpha t}.
\end{eqnarray*}
It follows immediately from this that
$$
N_{\nu,r}^{1}(F)(2r)^0\leq2^{-\alpha t}
N_{\zeta,r}^{-t}(F)(2r)^{\alpha t}.
$$
Letting $r$ tend to $0$, gives
$$
{\mathsf L}^{1, 0}_{\nu}(F)\leq2^{-\alpha t}{\mathsf L}^{\;-t,
\alpha t}_{\zeta}(F)\leq2^{-\alpha t}\overline{\mathsf H}^{\;-t,
\alpha t}_{\zeta}(E_{m_j}).
$$
We therefore conclude that
$$\overline{\mathsf H}^{1,
0}_{\nu}(E_{m_j})\leq2^{-\alpha t}\overline{\mathsf H}^{\;-t, \alpha
t}_{\zeta}(E_{m_j})$$ and
$$
{\mathsf H}^{1, 0}_{\nu}(E)\leq\sum_j\sum_{m}\overline{\mathsf
H}^{1, 0}_{\nu}(E_{m_j})\leq2^{-\alpha
t}\sum_j\sum_{m}\overline{\mathsf H}^{\;-t, \alpha
t}_{\zeta}(E_{m_j}).
$$
Finally, we get
$$
{\mathsf H}^{1, 0}_{\nu}(E)\leq 2^{-\alpha} {\mathsf H}^{\;-t,
\alpha t}_{\zeta}(E)\leq 2^{-\alpha} {\mathsf H}^{\;-t, \alpha
t}_{\zeta}(K)=0.
$$
This completes the proof of Proposition \ref{P3}.
\end{proof}

\begin{remark}
Let $(\mathbb{X},\rho)$ be a separable metric space, $\mathscr{B}$
stand for the set of balls of $\mathbb{X}$, and $\mathscr{F}$ for
the set of maps from $\mathscr{B}$ to $[0, +\infty)$. The set of
$\mu\in\mathscr{F}$ such that $\mu(B) = 0$ implies $\mu(B') = 0$ for
all $B'\subseteq B$ will be denoted by $\mathscr{F}^*$. For such a
$\mu$, one defines its support $\supp\mu$ to be the complement of
the set $\bigcup\{B\in \mathscr{B}\;\bigl|\;\;\mu(B)=0\}$.  If
moreover, we assume $(\mathbb{X},\rho)$ having the Besicovitch
property then all the above results hold for any function $\mu$ in
$\mathscr{F}^*$.
\end{remark}

\begin{remark} Notice that our formalism is adapted with the
vectorial multifractal formalism introduced by Peyri\`{e}re in
\cite{Pee} and in particular, the mixed multifractal formalism
introduced in \cite{Menceur2}. A valuation on the metric space
$(\mathbb{X}, \rho)$ is a real function $\xi$ defined on the set of
balls of $\mathbb{X}$, subject to the condition that it goes to
$+\infty$ as the radius goes to 0. In \cite{Pee}, Peyri\`{e}re
defined a more general multifractal formalism by considering
quantities of type
$$
\sum_{i} e^{-\Big(\big<q, \chi\left(x_{i}, r_{i}\right)\big>+t
\xi\left(x_{i}, r_{i}\right)\Big)}
$$
where $\chi: \mathbb{X} \times [0, +\infty)\mapsto
\mathbb{E}^{\prime}$ is a function, $\mathbb{E}^{\prime}$ is the
dual of a separable real Banach space $\mathbb{E},\big<.,.\big>$ is
the duality bracket between $\mathbb{E}$ and $\mathbb{E}^{\prime}$
and $(\mathbb{X}, \rho)$ is a metric space where the Besicovitch
covering theorem holds. Here we can introduce  a multifractal
formalism based on the vectorial Hewitt-Stromberg measures and that
this formalism is completely parallel to Peyri\`{e}re's multifractal
formalism which based on the  vectorial Hausdorff and packing
measures: While the vectorial measures are defined using coverings
and packings by families of sets with diameters less than a given
positive number $r$, say, the Hewitt-Stromberg measures are defined
using packings of balls with a fixed diameter $r$. More precisely,
by considering the quantities of type
$$
\sum_{i} e^{-\Big(\big<q, \chi\left(x_{i}, r\right)\big>+t
\xi\left(x_{i}, r\right)\Big)},
$$
then the results of \cite{NB2} and all above main theorems hold for
this vectorial (mixed) multifractal formalism for some prescribed
functions $\chi$ and $\xi$.
\end{remark}

\section{Some examples}\label{sec4}

This section discusses more motivations and examples related to
these concepts. We present an intermediate result, which will be
used in the proof of our results.
\begin{lemma}\label{Lemma} We have
\begin{eqnarray*}
\mathsf{\Delta}_{\zeta}^{q}(E)&=&\limsup_{r\to 0}\frac{\log
M_{\zeta,r}^q(E)}{-\log r}\\&=&\limsup_{r\to 0}\frac{1}{-\log
r}\log\left(\sup \left\{\displaystyle \sum_i
\mu(B(x_i,r))^q\nu(B(x_i,r))\;\Big|\;\Big(B(x_i,r)\Big)_i\;\text{is
a packing of}\; E \right\}\right).
\end{eqnarray*}
\end{lemma}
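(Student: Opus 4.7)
The plan is to show that the critical exponent $\mathsf{\Delta}_{\zeta}^{q}(E)$ coincides with the logarithmic growth rate of $M_{\zeta,r}^q(E)$. Let me set $s = \limsup_{r\to 0}\frac{\log M_{\zeta,r}^q(E)}{-\log r}$, and I will show $\mathsf{\Delta}_{\zeta}^{q}(E) = s$ by proving both inequalities; this is a standard ``box--dimension'' type argument transplanted to the multifractal setting.

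For the upper bound $\mathsf{\Delta}_{\zeta}^{q}(E) \leq s$, fix any $t > s$ and choose $\varepsilon$ with $0 < \varepsilon < t - s$. By definition of the $\limsup$, there exists $r_0 > 0$ such that for every $0 < r < r_0$ one has $\log M_{\zeta,r}^q(E) \leq (s+\varepsilon)(-\log r)$, equivalently $M_{\zeta,r}^q(E) \leq r^{-(s+\varepsilon)}$. Multiplying by $(2r)^t$ gives $M_{\zeta,r}^q(E)(2r)^t \leq 2^t\, r^{t-s-\varepsilon} \to 0$, and taking the $\limsup$ yields $\overline{\mathsf P}_\zeta^{q,t}(E) = 0$. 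By the definition of $\mathsf{\Delta}_{\zeta}^{q}(E)$ as an infimum of such exponents, we deduce $\mathsf{\Delta}_{\zeta}^{q}(E) \leq t$, and letting $t \searrow s$ yields the claim.

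For the reverse inequality $\mathsf{\Delta}_{\zeta}^{q}(E) \geq s$, fix any $t < s$ and choose $t'$ with $t < t' < s$. By the definition of $\limsup$, there is a sequence $r_n \downarrow 0$ with $\log M_{\zeta,r_n}^q(E) \geq t'(-\log r_n)$, i.e., $M_{\zeta,r_n}^q(E) \geq r_n^{-t'}$. Then $M_{\zeta,r_n}^q(E)(2r_n)^t \geq 2^t r_n^{t-t'} \to \infty$, so $\overline{\mathsf P}_\zeta^{q,t}(E) = +\infty$; hence $\mathsf{\Delta}_{\zeta}^{q}(E) \geq t$, and letting $t \nearrow s$ finishes the first equality. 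The second equality is merely a notational reformulation: it unpacks the definition of $M_{\zeta,r}^q(E)$ as a supremum over centered packings of $E$ at scale $r$, substituting this expression inside the $\limsup$.

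The only potential subtlety — and the step I would double-check — is that in both directions we are comparing a $\limsup$ of a numerical sequence with a $\limsup$ of the product $M_{\zeta,r}^q(E)(2r)^t$; the algebraic manipulation via logarithms is routine, but care is needed to ensure the chosen subsequence witnessing the $\limsup$ of $\log M_{\zeta,r}^q(E)/(-\log r)$ transfers to produce divergence of $M_{\zeta,r}^q(E)(2r)^t$, which is why we use the strict intermediate exponent $t'$. No other obstacle is anticipated, and no hypothesis beyond the definitions introduced in the excerpt is required.
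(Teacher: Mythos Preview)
Your proof is correct and follows essentially the same approach as the paper: both directions are obtained by fixing an intermediate exponent and showing that $\overline{\mathsf P}_\zeta^{q,t}(E)$ equals $0$ or $+\infty$ accordingly. The paper phrases each inequality as a proof by contradiction while you argue directly, but the underlying manipulation is identical.
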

\begin{proof}
 Suppose
that $$\limsup_{r\to 0}\frac{\log M_{\zeta,r}^q(E)}{-\log r} >
\mathsf{\Delta}_{\zeta}^{q}(E) +\epsilon\quad\text{ for
some}\quad\epsilon
>0.$$ Then we can find $\delta >0$ such that for any $r\le \delta$,
$$
M_{\zeta, r}^q (E)~ (2r)^{{  \mathsf{\Delta}_{\zeta}^{q}(E)
+\epsilon}}
>2^{{  \mathsf{\Delta}_{\zeta}^{q}(E)
+\epsilon}}$$ and then $$\overline{ {\mathsf P}}_\zeta^{q,
\mathsf{\Delta}_{\zeta}^{q}(E) +\epsilon}(E) \ge 2^{{
\mathsf{\Delta}_{\zeta}^{q}(E) +\epsilon}}>0
$$
which is a contradiction. We therefore infer $$ \limsup_{r\to
0}\frac{\log M_{\zeta,r}^q(E)}{-\log r}\le
\mathsf{\Delta}_{\zeta}^{q}(E) +\epsilon\;\;\text{ for any}\;\;
\epsilon
>0.$$ The proof of the following statement  $$
\limsup_{r\to 0}\frac{\log M_{\zeta,r}^q(E)}{-\log r}  \ge
\mathsf{\Delta}_{\zeta}^{q}(E) - \epsilon\;\;\text{ for
any}\;\;\epsilon
>0$$
is identical to the proof of the above statement and is therefore
omitted.
\end{proof}

\noindent{\bf Moran sets:} Let us recall the class of homogeneous
Moran sets (see for example \cite{W1, W3, W4}). We denote by
$\{n_k\}_{k\ge1}$ a sequence of positive integers and
$\{c_k\}_{k\ge1}$ a sequence of positive numbers satisfying
\begin{center} $n_k\ge2,\quad 0<c_k<1,\quad n_kc_k\le1\quad$ for
$k\ge1$.\end{center} Let $D_0=\emptyset$, and for any $k\ge1$, set
$$
D_{m, k}=\Big\{\left(i_{m}, i_{m+1}, \ldots, i_{k}\right) ; \quad 1
\leqslant i_{j} \leqslant n_{j}, \quad m \leqslant j \leqslant
k\Big\}\quad\text{and}\quad D_k=D_{1,k}.$$ Define
$D=\displaystyle\bigcup_{k\geqslant1}D_{k}.$ If
$$\sigma=\left(\sigma_{1}, \ldots, \sigma_{k}\right) \in D_{k}$$
and $$\tau=\left(\tau_{1}, \ldots,\tau_{m}\right) \in D_{k+1, m},$$
then we denote $$\sigma * \tau=\left(\sigma_{1}, \ldots, \sigma_{k},
\tau_{1}, \ldots, \tau_{m}\right).$$
\begin{definition}
Let $J$ be a closed interval such that $|J|=1$. We say the
collection $\mathscr{F}=\left\{J_{\sigma}, \sigma \in D\right\}$ of
closed subsets of $J$ fulfills the Moran structure if it satisfies
the following conditions:
\\ \text{(a)} $J_{\emptyset}=J.$
\\ \text{(b)} For all $k\ge0$ and $\sigma \in D_{k}, J_{\sigma * 1}, J_{\sigma
* 2}, \ldots, J_{\sigma{*} n_{k+1}}$ are subintervals of $J_\sigma$, and
satisfy that $$J_{\sigma * i}^{\circ} \cap J_{\sigma
*j}^{\circ}=\emptyset \;\;\;\text{for all}\;\;i \neq j,$$ where $A^{\circ}$ denotes the interior
of $A.$
\\ \text{(c)} For any
$k\ge1,\sigma \in D_{k-1}, \quad
c_{k}=\displaystyle\frac{\left|J_{\sigma{*} j}\right|}
{\left|J_{\sigma}\right|}, \quad 1 \leqslant j \leqslant n_{k}$
where $|A|$ denotes the diameter of $A.$
\end{definition}
Let $\mathscr{F}$ be a collection of closed subintervals of $J$
having homogeneous Moran structure. The set
$$E(\mathscr{F})=\displaystyle\cap_{k \geqslant 1}\cup_{\sigma \in
D_{k}} J_{\sigma}$$ is called an homogeneous Moran set determined by
$\mathscr{F}$. It is convenient to denote
$M\left(J,\left\{n_{k}\right\},\left\{c_{k}\right\}\right)$ for the
collection of homogeneous Moran sets determined by $J,$
$\left\{n_{k}\right\}$ and $\left\{c_{k}\right\}$.

\begin{remark}
If $\displaystyle\lim _{k \rightarrow +\infty} \sup _{\sigma \in
D_{k}}\left|J_{\sigma}\right|>0$, then E contains interior points.
Thus the measure and dimension properties will be trivial. We assume
therefore $\displaystyle\lim _{k\rightarrow +\infty} \sup _{\sigma
\in D_{k}}\left|J_{\sigma}\right|=0.$
\end{remark}
Now, we consider a class of homogeneous Moran sets $E$ witch satisfy
a special property called the strong separation condition
(\texttt{SSC}), i.e., take $J_\sigma\in {\mathscr F}$. Let
$J_{\sigma
*1}, J_{\sigma
*2}, \ldots, J_{\sigma *n_{k+1}}$ be the $n_{k+1}$ basic intervals of order
$k + 1$ contained in $J_\sigma$ arranged from the left to the right,
then we assume that for all $1\leq i\leq n_{k+1}-1$, $ \text{dist}
(J_{\sigma *i}, J_{\sigma *(i+1)}) \geq \Delta_k |J_\sigma|,$ where
$(\Delta_k)_k$ is a sequence of positive real numbers, such that $$
0<\Delta=\displaystyle\inf_k \Delta_k<1.$$
\subsection{Example 1} In this example, we discuss our multifractal structures of a
class of regularity Moran fractals associated with the sequences of
letters such that the frequency exists. Let $A=\{a, b\}$ be a
two-letter alphabet, and  $A^{*}$ the free monoid generated by $A$.
Let $F$ be the homomorphism on $A^{*},$ defined by $F(a)=ab$ and
$F(b)=a$. It is easy to see that $F^{n}(a)=F^{n-1}(a) F^{n-2}(a)$.
We denote by $\left|F^{n}(a)\right|$ the length of the word
$F^{n}(a)$, thus
$$F^{n}(a)=s_{1} s_{2} \cdots
s_{\left|F^{n}(a)\right|},\quad s_{i} \in A.$$ Therefore, as $n
\rightarrow \infty$, we get the infinite sequence
$$\omega=\lim _{n \rightarrow +\infty} F^{n}(a)=s_{1} s_{2} s_{3}
\cdots s_{n} \cdots \in\{a, b\}^{\mathbb{N}}$$ which is called the
Fibonacci sequence. For any $n\geqslant 1,$ write
$\omega_{n}=\left.\omega\right|_{n}=s_{1} s_{2} \cdots s_{n}$. We
denote by $\left|\omega_{n}\right|_{a}$ the number of the occurrence
of the letter $a \text { in } \omega_{n}, \text { and
}\left|\omega_{n}\right|_{b}$ the number of occurrence of $b.$ Then
$\left|\omega_{n}\right|_{a}+\left|\omega_{n}\right|_{b}=n$. It
follows from \cite{W1} that $\displaystyle\lim _{n \rightarrow+
\infty} \frac{\left|\omega_{n}\right|_{a}}{n}=~\eta,$  where
$\eta^{2}+\eta=1$.

 \bigskip
Let $0<r_{a}<\frac{1}{2}, 0<r_{b}<\frac{1}{3}, r_{a}, r_{b} \in
\mathbb{R}$. In the above Moran construction, let
$$|J|=1, \quad n_{k}=\left\{\begin{array}{ll}{2,} & {\text { if } s_{k}=a}
 \\ \\{3,} & {\text { if }
s_{k}=b}\end{array}\right.$$ and
$$c_{k}=\left\{\begin{array}{ll}{r_{a},} & {\text { if } s_{k}=a}
 \\ \\{r_{b},} & {\text { if } s_{k}=b}\end{array}, \quad 1 \leqslant j \leqslant
n_{k}.\right.$$ Then we construct the homogeneous Moran set relating
to the Fibonacci sequence and denote it by
$E:=E(\omega)=\left(J,\left\{n_{k}\right\},\left\{c_{k}\right\}\right)$.
By the construction of $E,$ we have
$$\left|J_{\sigma}\right|=r_{a}^{\left|\omega_{k}\right|_{a}}
r_{b}^{\left|\omega_{k}\right|_{b}},\quad\forall \sigma\in D_{k}.$$

 \bigskip
Let $P_{a}=\left(P_{a_{1}}, P_{a_{2}}\right), P_{b}=\left(P_{b_{1}},
P_{b_{2}}, P_{b_{3}}\right)$ be probability vectors, i.e.,
$$P_{a_{i}}>0,\;\; P_{b_{i}}>0,\quad\text{and}\quad\displaystyle\sum_{i=1}^{2}
P_{a_{i}}=1, \;\;\sum_{i=1}^{3} P_{b_{i}}=1.$$ For any $k \geqslant
1$ and any $\sigma \in D_{k},$ we know $\sigma=\sigma_{1} \sigma_{2}
\cdots \sigma_{k}$ where
$$\sigma_{k}\in\left\{\begin{array}{ll}{\{1,2\},} &
{\text { if } s_{k}=a} \\\\ {\{1,2,3\},} & {\text { if }
s_{k}=b.}\end{array}\right.$$ For $\sigma=\sigma_{1}\sigma_{2}
\cdots\sigma_{k},$ we define $\sigma(a)$ as follows: let
$\omega_{k}=s_{1}s_{2}\cdots{s_{k}}$ and
${e_{1}}<e_{2}<\cdots<e_{\left|\omega_{k}\right|_{a}}$ be the
occurrences of the letter a in $\omega_{k},$ then
$\sigma(a)=\sigma_{e_{1}}\sigma_{e_{2}}\cdots
\sigma_{e_{\left|\omega_{k}\right|_{a}}}$. Similarly, let
$\delta_{1}<\delta_{2}<\cdots<\delta_{\left|\omega_{k}\right|_{b}}$
be the occurrences of the letter b in $\omega_{k},$ then
$\sigma(b)=\sigma_{\delta_{1}}\sigma_{\delta_{2}}\cdots
\sigma_{\delta_{|\omega_{k}|_{b}}}$.
\\Let
$${P_{\sigma(a)}=P_{\sigma_{e_{1}}}
P_{\sigma_{e_{2}}} \cdots
P_{\sigma_{e\left|\omega_{k}\right|_{a}}}}$$ and
$${P_{\sigma(b)}=P_{\sigma_{\delta_{1}}} P_{\sigma_{\delta_{2}}}
\cdots P_{\sigma_{\delta\left|\omega_{k}\right|_{b}}}}.$$ Obviously
$$\displaystyle\sum_{\sigma\in D_{k}}P_{\sigma(a)}
P_{\sigma(b)}=1.$$ Let $\mu$ be a mass distribution on E, such that
for any $\sigma \in D_{k},$
$$\mu\left(J_{\sigma}\right)=P_{\sigma(a)} P_{\sigma(b)}.$$

Now we define an auxiliary function $\beta(q)$ as follows: For each
$q\in\mathbb{R}$ and $k\ge1$, there is a unique number
$\beta_{k}(q)$ such that
\begin{equation*}
\sum_{\sigma \in D_{k}}\Big(P_{\sigma(a)}
P_{\sigma(b)}\Big)^{q}\left|J_{\sigma}\right|^{\beta_{k}(q)}=1.
\end{equation*}
By a simple calculation, we get
$$\beta_{k}(q)=\frac{-\log \left(\displaystyle\sum_{i=1}^{2} P_{a_{i}}^{q}\right)
-\frac{k-\left|\omega_{k}\right|_{a}}{\left|\omega_{k}\right|_{a}}
\log \left(\displaystyle\sum_{i=1}^{3} P_{b_{i}}^{q}\right)}{\log
r_{a}+\frac{k-\left|\omega_{k}\right|_{a}}{\left|\omega_{k}\right|_{a}}
\log r_{b}}.$$ Clearly, for any  $k \geqslant 1$ we have
$\beta_{k}(1)=0.$
 Thus $\beta_{k}^{\prime}(q)<0$ for all $q$ and
$\beta_{k}(q)$ is a strictly decreasing function. Our auxiliary
function is
$$
\beta(q)=\lim _{k \rightarrow +\infty} \beta_{k}(q)=\frac{-\log
\left(\displaystyle\sum_{i=1}^{2} P_{a_{i}}^{q}\right)-\eta \log
\left(\displaystyle\sum_{j=1}^{3} P_{b_{j}}^{q}\right)}{\log
r_{a}+\eta \log r_{b}},
$$
$\text { where } \eta^{2}+\eta=1$. The function $\beta(q)$ is
strictly decreasing, smooth, $\lim _{q \rightarrow \mp \infty}
\beta(q)=\pm \infty$ and $\beta(1)=0$.

\bigskip
Then we have the following result,
\begin{theorem} Suppose that $E$ is a homogeneous Moran set satisfying
(\texttt{SSC}) and $\mu$ is the Moran measure on $E$. Then,
$$
\underline{\dim}_{MB}\big(E_\mu({-\beta'(q)})\big)=\overline{\dim}_{MB}\big(E_\mu({-\beta'(q)})\big)=\beta^*(-\beta'(q)).
$$
\end{theorem}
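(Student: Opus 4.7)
The plan is to reduce the statement to a direct application of Theorem \ref{ah1} by showing that both multifractal dimension functions of $\mu$ coincide with the auxiliary function $\beta$, namely
\[
\mathsf{b}_\mu(q)\;=\;\mathsf{B}_\mu(q)\;=\;\beta(q)\qquad\text{for all }q\in\mathbb{R}.
\]
Since $\beta$ is smooth and strictly decreasing with $\beta(1)=0$, once this identification is in place $\mathsf{B}_\mu$ is differentiable at every $q$ with $\mathsf{B}'_\mu(q)=\beta'(q)$, and the conclusion follows at once from the differentiable case of Theorem \ref{ah1}, provided the extra strict positivity $\mathsf{H}^{q,\mathsf{B}_\mu(q)}_{\mu}(K)>0$ is verified.

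The key geometric tool is the \texttt{SSC}. Together with $\lim_{k\to\infty}\sup_{\sigma\in D_k}|J_\sigma|=0$, it implies that for every $x\in E$ and every sufficiently small $r$ there is a unique generation $k=k(x,r)$ such that $B(x,r)\cap E$ is contained in a single basic interval $J_\sigma$ with $\sigma\in D_k$, and such that $|J_\sigma|$ is comparable to $r$ up to constants depending only on $\Delta$, $r_a$, $r_b$. This lets me transfer $\mu$-estimates on balls to estimates on Moran intervals and conversely, and it is what allows the auxiliary function $\beta_k(q)$, defined intrinsically on Moran intervals by $\sum_{\sigma\in D_k}\mu(J_\sigma)^q|J_\sigma|^{\beta_k(q)}=1$, to control the multifractal pre-measures of $\mu$.

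For the upper bound $\mathsf{B}_\mu(q)\le\beta(q)$, I would fix $\epsilon>0$ and work at the generation-$k$ scale $\rho_k=r_a^{|\omega_k|_a}r_b^{|\omega_k|_b}$. Any centered packing of $E$ by balls of radius $\asymp\rho_k$ consists, up to a bounded multiplicative overlap coming from the \texttt{SSC}, of balls each tied to a distinct $\sigma\in D_k$ with $\mu(B)\asymp\mu(J_\sigma)$. This yields, via Lemma \ref{Lemma},
\[
M_{\mu,\rho_k}^q(E)(2\rho_k)^{\beta(q)+\epsilon}\;\lesssim\;\rho_k^{\epsilon}\sum_{\sigma\in D_k}\mu(J_\sigma)^q|J_\sigma|^{\beta_k(q)}\cdot\rho_k^{\beta(q)-\beta_k(q)}\;=\;\rho_k^{\epsilon+\beta(q)-\beta_k(q)},
\]
which tends to $0$ because $\beta_k(q)\to\beta(q)$ by the Fibonacci frequency $|\omega_k|_a/k\to\eta$. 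Hence $\overline{\mathsf{P}}^{q,\beta(q)+\epsilon}_\mu(E)=0$ and letting $\epsilon\to0$ gives $\mathsf{B}_\mu(q)\le\beta(q)$, and in particular $\mathsf{b}_\mu(q)\le\beta(q)$.

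For the matching lower bound I would construct the natural Moran (Gibbs) measure $\mu_q$ at parameter $q$, defined on basic intervals by
\[
\mu_q(J_\sigma)\;=\;\frac{\mu(J_\sigma)^q|J_\sigma|^{\beta(q)}}{\sum_{\tau\in D_k}\mu(J_\tau)^q|J_\tau|^{\beta(q)}},\qquad\sigma\in D_k,
\]
and establish a Frostman-type estimate $\mu_q(B(x,r))\lesssim\mu(B(x,r))^q\,r^{\beta(q)}$ uniformly in $x\in E$ and small $r$, using the \texttt{SSC} to pass from balls to the unique enclosing Moran interval. Feeding this estimate into every centered covering of any subset $F\subseteq E$ bounds $N_{\mu,r}^{q}(F)(2r)^{\beta(q)}$ from below by a constant multiple of $\mu_q(F)$, which is the standard multifractal mass distribution principle and gives $\overline{\mathsf{H}}^{q,\beta(q)}_\mu(E)\ge c\,\mu_q(E)=c>0$, hence $\mathsf{H}^{q,\beta(q)}_\mu(E)>0$ and $\mathsf{b}_\mu(q)\ge\beta(q)$. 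Combining the two bounds produces the identification $\mathsf{b}_\mu(q)=\mathsf{B}_\mu(q)=\beta(q)$ together with the positivity hypothesis of Theorem \ref{ah1}, whose differentiable conclusion delivers the desired equality $\underline{\dim}_{MB}(E_\mu(-\beta'(q)))=\overline{\dim}_{MB}(E_\mu(-\beta'(q)))=\beta^*(-\beta'(q))$.

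The main obstacle is the Frostman-type estimate for $\mu_q$: because $n_k$ and $c_k$ oscillate along the Fibonacci word, the natural generation scales are inhomogeneous, and one must carefully control the multiplicative ratios $\mu_q(J_{\sigma*i})/\mu_q(J_\sigma)$ along the word and check that $\mu_q(B(x,r))$ remains comparable to $\mu_q(J_\sigma)$ for $\sigma\in D_{k(x,r)}$. The \texttt{SSC} and the convergence of $\beta_k(q)$ to $\beta(q)$ provided by the Fibonacci frequency are exactly what makes this step go through.
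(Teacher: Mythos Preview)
Your argument is correct, but it takes a different route from the paper's. You reduce everything to Theorem~\ref{ah1} by first proving the identification $\mathsf{b}_\mu=\mathsf{B}_\mu=\beta$ and then checking the positivity hypothesis $\mathsf{H}_\mu^{q,\beta(q)}(K)>0$ via a Frostman estimate for the Gibbs measure $\mu_q$. The paper instead bypasses the identification entirely and applies the new Section~\ref{Estimates} machinery directly: it builds the same auxiliary measure (called $\nu$ there), uses Proposition~\ref{Prop1} together with Theorem~\ref{thinf} to obtain the upper bound $\overline{\dim}_{MB}\big(E_\mu(-\beta'(q))\big)\le -q\beta'(q)+\beta(q)$, and for the lower bound computes the shifted function $f(t)=\mathsf{\Delta}_{\zeta}(t)=\beta(q+t)-\beta(q)$ via Lemma~\ref{Lemma}, checks $f(0)=0$ and $f'(0)=\beta'(q)$, verifies $\pi(K)>0$ from $\nu(K)>0$, and invokes Proposition~\ref{P2}. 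The point of the paper is precisely to advertise Theorems~\ref{thinf}--\ref{thess} and Propositions~\ref{P1}--\ref{P2} as a replacement for Theorem~\ref{ah1}, so your route, while perfectly valid, sidesteps the paper's intended illustration. Technically the two proofs share the same core difficulty: both need that $\sum_{\sigma\in D_k}\mu(J_\sigma)^q|J_\sigma|^{\beta(q)}=\rho_k^{\beta(q)-\beta_k(q)}$ stays bounded away from $0$ and $\infty$, which in turn rests on the bounded discrepancy $||\omega_k|_a-\eta k|=O(1)$ of the Fibonacci frequency---the paper states this as $0<\lim_k\sum_{\sigma}\mu(J_\sigma)^q|J_\sigma|^{\beta(q)}<\infty$, while in your language it is exactly what makes the Frostman comparison $\mu_q(B(x,r))\asymp\mu(B(x,r))^q r^{\beta(q)}$ go through.
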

\begin{proof}  Given
$q,t\in\mathbb{R},$ and let $\nu$ be a probability measure on $E$
such that for any $k\ge1$ and $\sigma_0\in D_k$,
$$\nu\left(J_{\sigma_{0}}\right)=\frac{\mu\left(J_{\sigma_{0}}\right)^{q}\left|J_{\sigma_{0}}\right|^{t}}
{\displaystyle\sum_{\sigma\in{D}_{k}}\mu\left(J_{\sigma}\right)^{q}\left|J_{\sigma}\right|^{t}}.$$
It follows from Proposition \ref{Prop1} that
$$
{\mathsf B}_{\nu}^{1}\big(E_\mu({-\beta'(q)})\big)\leq {\mathsf
B}_{\nu}({1})=0.
$$
It result from Theorem \ref{thinf} that
\begin{eqnarray}\label{Moran1}
\overline{\dim}_{MB}\big(E_\mu({-\beta'(q)})\big)\leq
-q\beta'(q)+\beta(q).
\end{eqnarray}
Now, by using Lemma \ref{Lemma}, we define the function $f$ by
$$
f(t)=\limsup_{r\to 0}\frac{1}{-\log r}\log\left(\sup
\left\{\displaystyle \sum_i
\mu(B(x_i,r))^t\nu(B(x_i,r))\;\Big|\;\Big(B(x_i,r)\Big)_i\;\text{is
a packing of}\; \supp\mu \right\}\right).
$$
Then $f(t)=\lim_{k\to+\infty} f_k(t)$, where $f_k(t)$ is a unique
number such that
\begin{equation*}
\sum_{\sigma \in D_{k}}\left(\mu(J_{\sigma})\right)^{t}
\nu(J_{\sigma})\left|J_{\sigma}\right|^{f_{k}(t)}=1.
\end{equation*}
Which implies that
\begin{equation*}
\sum_{\sigma \in
D_{k}}\left(\mu(J_{\sigma})\right)^{q+t}\left|J_{\sigma}\right|^{f_{k}(t)+\beta_k(q)}=1.
\end{equation*}
A straightforward calculation shows that
$$f_{k}(t)=\frac{-\log \left(\displaystyle\sum_{i=1}^{2} P_{a_{i}}^{q+t}\right)
-\frac{k-\left|\omega_{k}\right|_{a}}{\left|\omega_{k}\right|_{a}}
\log \left(\displaystyle\sum_{i=1}^{3} P_{b_{i}}^{q+t}\right)}{\log
r_{a}+\frac{k-\left|\omega_{k}\right|_{a}}{\left|\omega_{k}\right|_{a}}
\log r_{b}}- \beta_k(q)$$ and
$$
f(t)=\lim_{k\to+\infty} f_k(t)=\beta(q+t)- \beta(q).
$$
It is clear that $f(0) = 0$, $f'(0)$ exists and equal to
$\beta'(q)$. We can see from the construction of measure $\nu$ that
$$0<\displaystyle\lim_{k\to +\infty}\sum_{\sigma\in{D}_{k}}
\mu\left(J_{\sigma}\right)^{q}\left|J_{\sigma}\right|^{{\beta}(q)}<+\infty$$
and then
$$
\nu(\supp\mu)>0 \;\Rightarrow \; \pi(\supp\mu)>0.
$$
We therefore conclude from Proposition \ref{P2} that
\begin{eqnarray}\label{Moran2}
\underline{\dim}_{MB}\big(E_\mu({-\beta'(q)})\big)\geq
-q\beta'(q)+\beta(q).
\end{eqnarray}
Thus, the result is a consequence from \eqref{Moran1} and
\eqref{Moran2}.
\end{proof}
\subsection{Example 2} In the following, our results are for a particular type of
fractal called a non-regularity Moran fractal associated with the
sequences of which the frequency of the letter does not exists. We
first define a Moran measure on the homogeneous Moran sets $E$. Let
$\big\{p_{i,j}\big\}_{j=1}^{n_i}$ be the probability vectors (i.e.,
$p_{i,j}>0$ and $\sum_{j=1}^{n_i}p_{i,j}=1$ for all $i=1,2,3,...$)
and suppose that $p_0=\inf \{p_{i,j}\}>0$.  Let $\mu$ be a mass
distribution on $E$, such that for any $J_\sigma$ ($\sigma\in D_k$)
$$
\mu(J_\sigma)=p_{1,\sigma_1}p_{2,\sigma_2}\cdots p_{k,\sigma_k}.
$$
Now, we define an auxiliary function $\beta_k(q)$ as follows: for
all $k \geq 1$ and $q\in \mathbb{R}$, there is a unique number
$\beta_k(q)$ satisfying
$$
\sum_{\sigma\in D_k} p_\sigma^q |J_\sigma|^{\beta_k(q)}=1.
$$
Set
$$
\underline{\beta}(q)=\liminf_{k\to+\infty}\beta_k(q)\quad\text{and}\quad\overline{\beta}(q)=\limsup_{k\to+\infty}\beta_k(q).
$$
Let $(t_k)_k$  be a sequence of integers such that
$$
t_1=1,\;\; t_2=3\;\;\text{and}\;\; t_{k+1}=2t_k,\;\; \forall\;
k\geq3.
$$
Define the family of parameters $n_i$, $c_i$ and $p_{i,j}$ as
follows:
$$
n_1 = 2,\;\;n_i=\left\{
  \begin{array}{ll}
  3,\;\text{if}\;\; t_{2k-1}\leq i<t_{2k}, &\\
\\
  2,\;\text{if}\;\; t_{2k}\leq i<t_{2k+1}. &
  \end{array}
\right.
$$
For $0<r_a<\frac12$ and  $0<r_b<\frac13$, let
$$
c_1 = r_a,\;\;c_i=\left\{
  \begin{array}{ll}
  r_b,\;\text{if}\;\; t_{2k-1}\leq i<t_{2k}, &\\
\\
  r_a,\;\text{if}\;\; t_{2k}\leq i<t_{2k+1}. &
  \end{array}
\right.
$$
Let $(p_{a,j})_{j=1}^2$ and  $(p_{b,j})_{j=1}^3$ be two  probability
vectors. Define
$$
p_{1,j}=p_{a,j},\;\text{for all}\; 1\leq j\leq 2$$ and
$$
p_{i,j}=\left\{
  \begin{array}{ll}
  p_{b,j},\;\text{if}\;\; t_{2k-1}\leq i<t_{2k},\;\;1\leq j\leq
3, &\\
\\
  p_{a,j},\;\text{if}\;\; t_{2k}\leq i<t_{2k+1},\;\;1\leq j\leq
2. &
  \end{array}
\right.
$$
Then, we have
$$
\beta_k(q)=\frac{\log \sum_{\sigma\in D_k}\mu(J_\sigma)^q}{-\log
(c_1\cdots c_k)}.
$$
Finally, if $N_k$ is the number of integers $i\leq k$ such that
$p_{i,j} = p_{a,j}$, then $$\liminf_{k\to+\infty}
\frac{N_k}{k}=\frac{1}{3},$$
$$\limsup_{k\to+\infty} \frac{N_k}{k}=\frac{2}{3}$$ and
$$
\beta_{k}(q)=-\frac{\frac{N_k}k\log
\left(\displaystyle\sum_{j=1}^{2} p_{a,j}^{q}\right)
+\left(1-\frac{N_k}k\right)\log \left(\displaystyle\sum_{j=1}^{3}
p_{b,j}^{q}\right)}{\frac{N_k}k\log
r_{a}+\left(1-\frac{N_k}k\right)\log r_{b}}.
$$
We can then conclude that
$$
\underline{\beta}(q)=\min\left\{-\frac{\frac{1}3\log \sum_{j=1}^{2}
p_{a,j}^{q} +\frac{2}3\log \sum_{j=1}^{3} p_{b,j}^{q}}{\frac{1}3\log
r_{a}+\frac{2}3\log r_{b}},-\frac{\frac{2}3\log \sum_{j=1}^{2}
p_{a,j}^{q} +\frac{1}3\log \sum_{j=1}^{3} p_{b,j}^{q}}{\frac{2}3\log
r_{a}+\frac{1}3\log r_{b}}\right\}
$$
and
$$
\overline{\beta}(q)=\max\left\{-\frac{\frac{1}3\log \sum_{j=1}^{2}
p_{a,j}^{q} +\frac{2}3\log \sum_{j=1}^{3} p_{b,j}^{q}}{\frac{1}3\log
r_{a}+\frac{2}3\log r_{b}},-\frac{\frac{2}3\log \sum_{j=1}^{2}
p_{a,j}^{q} +\frac{1}3\log \sum_{j=1}^{3} p_{b,j}^{q}}{\frac{2}3\log
r_{a}+\frac{1}3\log r_{b}}\right\}.
$$
It is obvious that $\underline{\beta}(q)\leq
{\beta}_k(q)\leq\overline{\beta}(q)$ for all $k\geq 2$, the
functions $\underline{\beta}$ and $\overline{\beta}$ are are
strictly decreasing, $\underline{\beta}(1)=\overline{\beta}(1)=0$
and $\overline{\beta}(q)>\underline{\beta}(q)$ for all $q\neq 1$
(see Fig. \ref{Fig1}).

\begin{figure}[h!]\centering
\includegraphics[height=5cm]{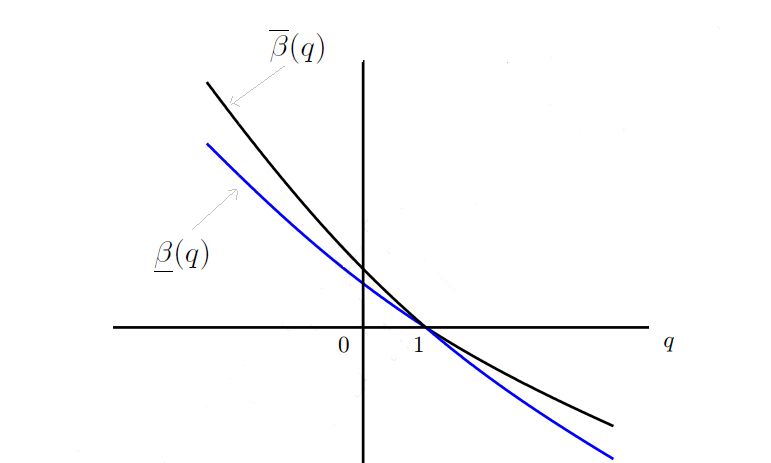}
 \caption{ The multifractal functions $\overline{\beta}$ and $\underline{\beta}$. }\label{Fig1}
\end{figure}

At last, we compute the dimension of the level sets $E_\mu(\alpha)$
(see Fig. \ref{Fig2}).
\begin{theorem}\label{Shen111} Suppose that $E$ is a homogeneous Moran set satisfying
(\texttt{SSC}) and $\mu$ is the Moran measure on $E$. Let $q\in
\mathbb{R}$ and assume that $\underline{\beta}'(q)$ (resp.
$\overline{\beta}'(q)$) exists. Then,

$$
\underline{\dim}_{MB}\big(E_\mu({-\underline{\beta}'(q)})\big)=\underline{\beta}^*(-\underline{\beta}'(q))\;\;\text{provided}
\;\;0<\displaystyle\liminf_{k\to +\infty}\sum_{\sigma\in{D}_{k}}
\mu\left(J_{\sigma}\right)^{q}\left|J_{\sigma}\right|^{\underline{\beta}(q)}<+\infty
$$
and
$$
\overline{\dim}_{MB}\big(E_\mu({-\overline{\beta}'(q)})\big)=\overline{\beta}^*(-\overline{\beta}'(q))\;\;\text{provided}
\;\;0<\displaystyle\limsup_{k\to +\infty}\sum_{\sigma\in{D}_{k}}
\mu\left(J_{\sigma}\right)^{q}\left|J_{\sigma}\right|^{\overline{\beta}(q)}<+\infty.
$$
\end{theorem}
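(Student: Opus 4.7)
My proof will adapt the strategy of Example 1 (the regular Fibonacci case) to the present non-regular setting, where $\beta_k(q)$ only has a liminf $\underline\beta(q)$ and a limsup $\overline\beta(q)$. As there, the upper bound on dimension is produced by Theorem \ref{thinf} applied to an auxiliary Moran probability measure $\nu$ (which forces $\mathsf{B}_\nu^{1}(E)\le 0$), and the matching lower bound comes from Theorem \ref{thess} via an application of Proposition \ref{P2} (or of the $\mathsf{b}_\zeta$-analogue implicit in Proposition \ref{P3}). The construction must however be split according to which of the two equalities in the theorem one is proving.

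For the $\underline{\dim}_{MB}$--equality set $t_\ast=\underline\beta(q)$, and for the $\overline{\dim}_{MB}$--equality set $t_\ast=\overline\beta(q)$. In each case, define a Moran probability measure $\nu$ on $E$ by the cylinder weights
$$
\nu(J_{\sigma_0})=\frac{\mu(J_{\sigma_0})^{q}\,|J_{\sigma_0}|^{t_\ast}}{Z_k(t_\ast)},\qquad \sigma_0\in D_k,\qquad Z_k(t):=\sum_{\sigma\in D_k}\mu(J_\sigma)^{q}\,|J_\sigma|^{t}.
$$
The hypothesis $0<\liminf_k Z_k(\underline\beta(q))<+\infty$ (resp.\ $0<\limsup_k Z_k(\overline\beta(q))<+\infty$) guarantees that, along the subsequence of levels realizing the liminf (resp.\ limsup), these weights are mutually consistent up to a bounded factor, so $\nu$ extends to a Borel probability on $E$ whose projection $\pi={\mathsf H}_{\nu}^{1,0}$ has $\pi(E)>0$.

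Setting $\zeta=(\mu,\nu)$, by Lemma \ref{Lemma} and the \texttt{SSC} each centered packing of $E$ by balls of radius $r\asymp c_1\cdots c_k$ corresponds (up to the Besicovitch constant) to a family of level-$k$ cylinders, and direct substitution gives
$$
\mathsf{B}_\zeta(t)=\limsup_{k\to\infty}\bigl(\beta_k(q+t)-\beta_k(q)\bigr),\qquad \mathsf{b}_\zeta(t)=\liminf_{k\to\infty}\bigl(\beta_k(q+t)-\beta_k(q)\bigr).
$$
Both separator functions vanish at $t=0$; moreover the Gibbs form of $\nu$ yields, for $q\ge 0$, the identity $\underline\alpha_\nu(x)=q\,\underline\alpha_\mu(x)+t_\ast$ and the analogue for $\overline\alpha_\nu$, so on the level set $E_\mu(-\underline\beta'(q))$ (resp.\ $E_\mu(-\overline\beta'(q))$) the local dimension of $\nu$ equals $-q\underline\beta'(q)+\underline\beta(q)=\underline\beta^{*}(-\underline\beta'(q))$ (resp.\ $-q\overline\beta'(q)+\overline\beta(q)=\overline\beta^{*}(-\overline\beta'(q))$).

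Putting the ingredients together: for the second equality one can apply Proposition \ref{P2} essentially verbatim as in the proof of Theorem \ref{ah1}, since $\mathsf{B}_\zeta$ is differentiable at $0$ with derivative $\overline\beta'(q)$ by assumption. For the first equality the standard Proposition \ref{P2} cannot be used directly: its $\mathsf{B}_\zeta$-based formulation would detect $\overline\beta$ rather than $\underline\beta$. One instead replaces it by Proposition \ref{P3}, which is phrased in terms of $\mathsf{b}_\zeta$ and, when combined with Theorem \ref{thess}, extracts the $\underline{\dim}_{MB}$-lower bound from the liminf-subsequence. This substitution is the main technical point of the proof; verifying that the Besicovitch-covering step of Proposition \ref{P1} carries through when one restricts to the subsequence realizing $\liminf_k\beta_k(q)=\underline\beta(q)$ is precisely what the nondegeneracy hypothesis $0<\liminf_k Z_k(\underline\beta(q))<+\infty$ makes possible.
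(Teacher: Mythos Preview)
Your construction of the auxiliary Gibbs measure $\nu$ and the upper-bound step via Theorem~\ref{thinf} match the paper exactly. The lower-bound argument, however, diverges from the paper's and contains genuine gaps.

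The paper does \emph{not} route the lower bound through $\mathsf{B}_\zeta$ or $\mathsf{b}_\zeta$ at all. It simply observes (citing \cite{W3}) that the Gibbs measure $\nu$ already charges the level set, i.e.\ $\nu\big(E_\mu(-\underline\beta'(q))\big)>0$, whence $\pi\big(E_\mu(-\underline\beta'(q))\big)>0$, and then applies Theorem~\ref{thess} directly to that set. This bypasses any need to identify $\mathsf{B}_\zeta$ or $\mathsf{b}_\zeta$ with combinatorial quantities.

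Your route has two concrete problems. First, the asserted identities $\mathsf{B}_\zeta(t)=\limsup_k\big(\beta_k(q+t)-\beta_k(q)\big)$ and $\mathsf{b}_\zeta(t)=\liminf_k\big(\beta_k(q+t)-\beta_k(q)\big)$ are not justified: Lemma~\ref{Lemma} only computes $\mathsf\Delta_\zeta$, and even that identification requires controlling the normalizing constants $Z_k$, whose limsup the hypothesis says nothing about. More seriously, even if those formulas held, there is no reason for $\liminf_k\big(\beta_k(q+t)-\beta_k(q)\big)$ to be differentiable at $t=0$ with derivative $\underline\beta'(q)$: the liminf over $k$ of $\beta_k(q+t)-\beta_k(q)$ is \emph{not} $\underline\beta(q+t)-\underline\beta(q)$ in general (the minimizing $\theta=N_k/k$ may differ at $q$ and at $q+t$). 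Second, Proposition~\ref{P3} cannot deliver what you need for the first equality: it shows that $\pi$ is carried by the set $E(\alpha)=\{x:\underline\alpha_\mu(x)\le\alpha\le\overline\alpha_\mu(x)\}$, not by $E_\mu(\alpha)=\{x:\alpha_\mu(x)=\alpha\}$, and in the paper it is used (via Theorem~\ref{th4}) to lower-bound $\overline{\dim}_{MB}$ of $E(\alpha)$, not $\underline{\dim}_{MB}$ of $E_\mu(\alpha)$. So the substitution you describe does not extract the $\underline{\dim}_{MB}$ lower bound for the correct set.

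The fix is to drop the $\mathsf{B}_\zeta/\mathsf{b}_\zeta$ machinery for the lower bound and argue, as the paper does, that $\nu$ itself is concentrated on $E_\mu(-\underline\beta'(q))$ (respectively $E_\mu(-\overline\beta'(q))$); the nondegeneracy hypothesis on $Z_k$ is exactly what is needed for the large-deviation/Billingsley argument in \cite{W3} to show this.
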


\begin{figure}[h!]\centering
 \includegraphics[height=4.5cm]{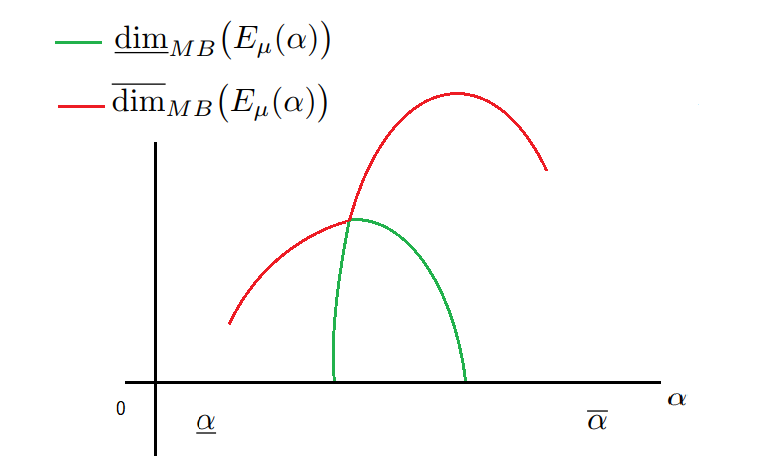}\\
 \includegraphics[height=4.5cm]{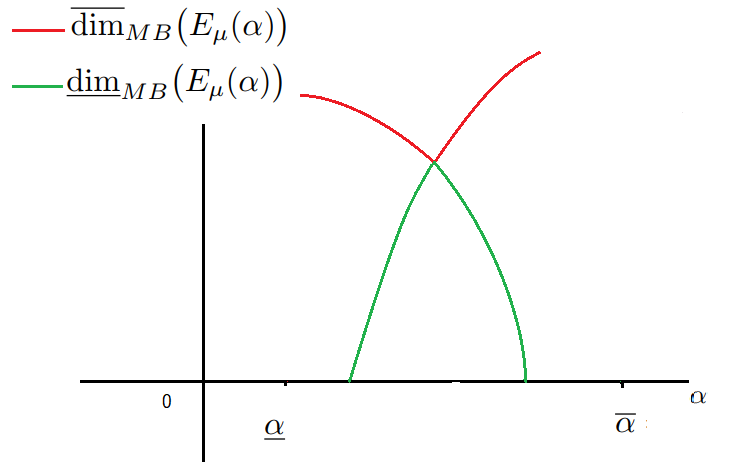}\\
\includegraphics[height=4.5cm]{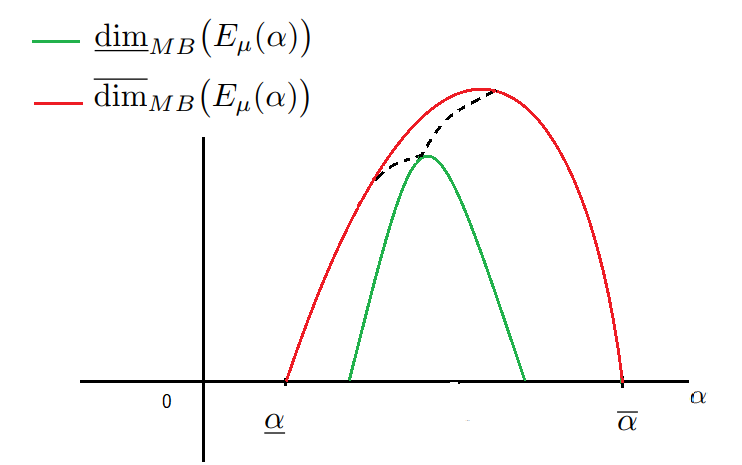}
 \caption{ The Hewitt-Stromberg dimensions of the set
 ${E}_{\mu}(\alpha)$.}\label{Fig2}
\end{figure}

\begin{proof}  For
$q\in\mathbb{R},$ let $\nu$ be a probability measure on $\supp\mu$
such that for any $k\ge1$ and $\sigma_0\in D_k$,
$$\nu\left(J_{\sigma_{0}}\right)=\frac{\mu\left(J_{\sigma_{0}}\right)^{q}\left|J_{\sigma_{0}}\right|^{\underline{\beta}(q)}}
{\displaystyle\sum_{\sigma\in{D}_{k}}\mu\left(J_{\sigma}\right)^{q}\left|J_{\sigma}\right|^{\underline{\beta}(q)}}.$$
Proposition \ref{Prop1} implies that
$$
{\mathsf B}_{\nu}^{1}\big(E_\mu({-\underline{\beta}'(q)})\big)\leq
{\mathsf B}_{\nu}({1})=0.
$$
It follows from Theorem \ref{thinf} that
\begin{eqnarray*}
\underline{\dim}_{MB}\big(E_\mu({-\underline{\beta}'(q)})\big)\leq
-q\underline{\beta}'(q)+\underline{\beta}(q).
\end{eqnarray*}
We can see from the construction of measure $\nu$ that
$\nu\big(E_\mu({-\underline{\beta}'(q)})\big)>0$ (we can see also
\cite{W3}) which implies that
$\pi\big(E_\mu({-\underline{\beta}'(q)})\big)>0.$ We therefore
conclude from Theorem \ref{thess} that
\begin{eqnarray*}
\underline{\dim}_{MB}\big(E_\mu({-\underline{\beta}'(q)})\big)\geq
-q\underline{\beta}'(q)+\underline{\beta}(q)
\end{eqnarray*}
which yields the desired result. The proof of the second statement
is identical to the proof of the first statement and is therefore
omitted.
\end{proof}
\begin{remark} Note that the model of Yuan \cite{YZ} (Huang et al. \cite{HLW})
can be adapted to our general theory which provides a non-trivial
sample with a full description of the refined multifractal analysis
based on multifractal Hewitt-Stromberg measures.

Let $J=[0,1]$, $n_i=2$ and $\mathscr{N}:=\{N_k\}_{k\in\mathbb{N}}$
be an increasing sequence of integers with $N_0=0$ and
$\displaystyle{\lim_{k\to+\infty}\frac{N_{k+1}}{N_{k}}=+\infty}$.
Fix four real numbers $A,B,p,\widetilde{p}$ with $A>B>2$ and
$0<p,\widetilde{p}\leq 1/2$. Now for every $i\in\mathbb{N}$, we
define $c_i$ and $\{P_{i,j}\}_{1\leq j\leq n_i}$ as follows:
$$
c_i=\left\{
\begin{array}{ll}
    \displaystyle\frac{1}{A},\;\text{if}\;\; N_{2k}< i\leq N_{2k+1}, &\\
    \\\\
    \displaystyle\frac{1}{B},\;\text{if}\;\; N_{2k+1}< i\leq N_{2k+2}. &
\end{array}
\right.
$$
and
$$
 P_{i,j}=\left\{
\begin{array}{ll}
    p,\;\text{if}\;\; N_{2k}< i\leq N_{2k+1} \text{ and } j=1, &\\
    \\
    1-p,\;\text{if}\;\; N_{2k}< i\leq N_{2k+1} \text{ and } j=2,
    &\\\\
    \widetilde{p},\;\text{if}\;\; N_{2k+1}< i\leq N_{2k+2}\text{ and } j=1,
    &\\\\
    1-\widetilde{p},\;\text{if}\;\; N_{2k+1}< i\leq N_{2k+2}\text{ and } j=2.
\end{array} \right.
$$
Define now the following functions
\begin{eqnarray*}
    \beta_1:&\mathbb{R}\to& \mathbb{R}\\
    &q\mapsto&\frac{\log(p^q+(1-p)^q)}{\log A},
\end{eqnarray*}
and
\begin{eqnarray*}
    \beta_2:&\mathbb{R}\to& \mathbb{R}\\
    &q\mapsto&\frac{\log(\widetilde{p}^q+(1-\widetilde{p})^q)}{\log
    B}.
\end{eqnarray*}
We can conclude that
$${\mathsf
b}_\mu(q)=\min\Big\{\beta_1(q),\beta_2(q)\Big\}$$ and  $${\mathsf
B}_\mu(q)=\max\Big\{\beta_1(q),\beta_2(q)\Big\}.$$ If $-\frac{\log
(1-\widetilde{p})}{\log B}<-\frac{\log p}{\log A}$, then for all
$\alpha\in \left[-\frac{\log (1-\widetilde{p})}{\log
B},\min\{-\frac{\log p}{\log A},-\frac{\log \widetilde{p}}{\log
B}\}\right]$
$$\underline{\dim}_{MB}\big(E_\mu(\alpha)\big)={\mathsf
b}_\mu^*(\alpha),$$ and for $\alpha\in\Big\{{\mathsf
B}_\mu'(q):q\in\mathbb{R} \text{ and } {\mathsf B}_\mu \text{ is
differentiable at } q\Big\}$ we have
$$\overline{\dim}_{MB}\big(E_\mu(\alpha)\big)={\mathsf
B}_\mu^*(\alpha).$$ Even if $p=\widetilde{p},$
$$
\liminf_{k\to+\infty}\frac{\sharp\left\{1\leq i\leq k;\;
c_i=\frac{1}{A}\right\}}{k}=0$$ and
$$\limsup_{k\to+\infty}\frac{\sharp\left\{1\leq i\leq k;\;
c_i=\frac{1}{A}\right\}}{k}=1,
$$
then the phenomena are the same, i.e., for $\alpha\in
\left[-\frac{\log (1-{p})}{\log B},\;-\frac{\log p}{\log A}\right]$,
we have
$$\underline{\dim}_{MB}\big(E_\mu(\alpha)\big)={\mathsf
b}_\mu^*(\alpha),$$ and for $\alpha\in\left[-\frac{\log
(1-{p})}{\log B},\;-\frac{\log p}{\log
A}\right]\setminus\left(\frac{-p \log p - (1 - p) \log(1 - p)}{\log
A},\; \frac{-p \log p - (1 - p) \log(1 - p)}{\log B}\right)$, we
have
$$\overline{\dim}_{MB}\big(E_\mu(\alpha)\big)={\mathsf
B}_\mu^*(\alpha).$$ This implies that the results of Theorem
\ref{Shen111} hold for some
$(\alpha,\beta)\neq\left(-\underline{\beta}'(q),
-\overline{\beta}'(q)\right).$
\end{remark}

\bigskip\bigskip In the following, we give some examples of a
measure for which the lower and upper multifractal Hewitt-Stromberg
functions are different (see Fig. \ref{Fig3}) and the
Hewitt-Stromberg dimensions of the level sets of the local
H\"{o}lder exponent $E_\mu(\alpha)$ are given by the Legendre
transform respectively of lower and upper multifractal
Hewitt-Stromberg functions (see Fig. \ref{Fig4}). In particular, we
prove that our multifractal formalism \cite[Theorem 8]{NB2} holds
for these measures.

\subsection{Example 3}
Take $0<p<\hat{p}\leq 1/2$ and a sequence of integers
$$
1=t_0<t_1<\cdots<t_n<\cdots, \;\; \text{such that}
\;\;\lim_{n\to+\infty}\frac{t_{n+1}}{t_n}=+\infty. $$ The measure
$\mu$ assigned to the diadic interval of the n-th generation
$I_{\varepsilon_1\varepsilon_2\cdots\varepsilon_n}$ is
$$
\mu\big(I_{\varepsilon_1\varepsilon_2\cdots\varepsilon_n}\big)=\prod_{j=1}^n
\varpi_j,
$$
where
$$
\left\{
  \begin{array}{ll}
  \text{if}\;\; t_{2k-1}\leq j<t_{2k}\;\;\text{for some}\;\;k,\;
\varpi_j=p\;\text{if}\;\varepsilon_j=0,\; \varpi_j=1-p\;\;\text{otherwise}   , &\\
\\
  \text{if}\;\; t_{2k}\leq j<t_{2k+1}\;\;\text{for some}\;\;k,\;
 \varpi_j=\hat{p}\;\text{if}\;\varepsilon_j=0,\; \varpi_j=1-\hat{p}\;\;\text{otherwise.} &
  \end{array}
\right.
$$
We observe that
$$
\sum_{{\varepsilon_1\varepsilon_2\cdots\varepsilon_{n}}}\mu\big(I_{\varepsilon_1\varepsilon_2\cdots\varepsilon_{n}}\big)^q=\big(p^q+(1-p)^q\big)^{N_n}
\big(\hat{p}^q+(1-\hat{p})^q\big)^{n-N_n},
$$
where  $N_n$ is the number of integers $j\leq n$ such that
$\varpi_j=p$. It is clear that  $\liminf_{n\to+\infty} (N_n/n)=0$
and $\limsup_{n\to+\infty} (N_n/n)=1$. Now, for $q\in \mathbb{R},$
we define
$$
\underline{\tau}(q)=\log_2\big(p^q+(1-p)^q\big)\quad\text{and}\quad
\overline{\tau}(q)=\log_2\big(\hat{p}^q+(1-\hat{p})^q\big).
$$
Then
$$
\liminf_{n\to+\infty}\frac{1}n
\log\sum_{{\varepsilon_1\varepsilon_2\cdots\varepsilon_{n}}}
\mu\big(I_{\varepsilon_1\varepsilon_2\cdots\varepsilon_{n}}\big)^q=\min\big\{\underline{\tau}(q),\overline{\tau}(q)\big\}
$$
and
$$
\limsup_{n\to+\infty}\frac{1}n
\log\sum_{{\varepsilon_1\varepsilon_2\cdots\varepsilon_{n}}}
\mu\big(I_{\varepsilon_1\varepsilon_2\cdots\varepsilon_{n}}\big)^q=\max\big\{\underline{\tau}(q),\overline{\tau}(q)\big\}.
$$
It results from \cite[Proposition 2]{NB2} and  \cite{BJ, BBH} that
$$
{\mathsf
b}_{\mu}(q)=\min\big\{\underline{\tau}(q),\overline{\tau}(q)\big\}\quad\text{and}\quad{\mathsf
B}_{\mu}(q)={\mathsf  \Delta
}_{\mu}(q)=\max\big\{\underline{\tau}(q),\overline{\tau}(q)\big\}
$$
This gives
$$
\left\{
  \begin{array}{ll}
  {\mathsf b}_{\mu}(q)=\underline{\tau}(q)<\overline{\tau}(q)={\mathsf
B}_{\mu}(q)={\mathsf  \Delta }_{\mu}(q), \;\; \text{for}\;\; 0<q<1,&
\\   \\
{\mathsf b}_{\mu}(q)=\overline{\tau}(q)<\underline{\tau}(q)={\mathsf
B}_{\mu}(q)={\mathsf  \Delta }_{\mu}(q), \;\; \text{for}\;\;
q<0\;\text{or}\; q>1
  \end{array}
\right.
$$
and
$$
{\mathsf b}_{\mu}(q)={\mathsf B}_{\mu}(q)={\mathsf  \Delta
}_{\mu}(q), \;\; \text{for}\;\; q\in\{0,1\}.
$$

\begin{figure}[h!]\centering
 \includegraphics[height=4.5cm]{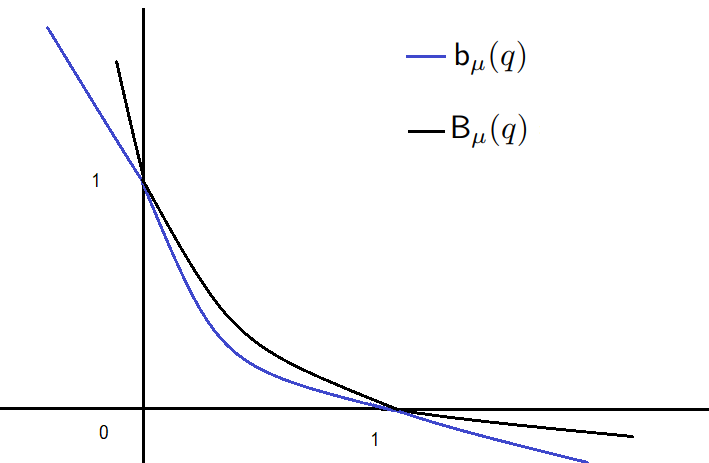}
 \caption{ The relation between the graphs of ${\mathsf b}_{\mu}$ and ${\mathsf B}_{\mu}$.}\label{Fig3}
\end{figure}

Given $0 < p, \hat{p} < 1$, define the mixed entropy function
$$
h(\hat{p}, p) := -\hat{p}\log_2 p - (1 - \hat{p}) \log_2(1 - p).
$$
Then
$$
-\underline{\tau}'(+\infty)=h(0,p)\leq-\underline{\tau}'(1)=h(p,p)\leq-\underline{\tau}'(0)=h(1/2,p)\leq
h(1,p)=-\underline{\tau}'(-\infty)
$$
and
$$
-\overline{\tau}'(+\infty)=h(0,\hat{p})\leq-\overline{\tau}'(1)=h(\hat{p},\hat{p})\leq-\overline{\tau}'(0)=h(1/2,\hat{p})
\leq h(1,\hat{p})=-\overline{\tau}'(-\infty).
$$
Now, we have the following result,
\begin{theorem}\label{th5} Assume that $\alpha\in
\Big(-\overline{\tau}'(+\infty),\;-\overline{\tau}'(-\infty)\Big)$.
\begin{enumerate}
\item For $\alpha\notin
\big[-\mathsf{b}_{\mu_+}'(0),-\mathsf{b}_{\mu_-}'(0)\big]\bigcup
\big[-\mathsf{b}_{\mu_+}'(1),-\mathsf{b}_{\mu_-}'(1)\big]$, we have
$$
{\dim}_{H}\big(E_\mu(\alpha)\big)=\underline{\dim}_{MB}\big(E_\mu(\alpha)\big)={\mathsf
b}_{\mu}^*(\alpha).
$$
\\
\item  For $\alpha\notin
\big[-\mathsf{B}_{\mu_+}'(0),-\mathsf{B}_{\mu_-}'(0)\big]\bigcup
\big[-\mathsf{B}_{\mu_+}'(1),-\mathsf{B}_{\mu_-}'(1)\big]$, we have
$$
{\dim}_{P}\big(E_\mu(\alpha)\big)=\overline{\dim}_{MB}\big(E_\mu(\alpha)\big)={\mathsf
B}_{\mu}^*(\alpha).
$$
\end{enumerate}
\end{theorem}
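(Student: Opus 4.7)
}
The overall strategy is to reduce both assertions to the general framework of Theorems \ref{Jcole}, \ref{ah1}, and \ref{th4} by producing, for each admissible $\alpha$, a Gibbs-type auxiliary measure whose local dimension coincides with $\alpha$ on a set of positive $\pi$-measure. The key observation is that the explicit formulas for $\underline{\tau}$ and $\overline{\tau}$ show these are real-analytic strictly concave functions meeting only at $q=0$ and $q=1$. Consequently $\mathsf{b}_{\mu} = \min\{\underline{\tau},\overline{\tau}\}$ and $\mathsf{B}_{\mu} = \max\{\underline{\tau},\overline{\tau}\}$ are smooth on $\mathbb{R}\setminus\{0,1\}$, with corners precisely at $q\in\{0,1\}$. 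The two excluded intervals $[-\mathsf{b}_{\mu_+}'(0),-\mathsf{b}_{\mu_-}'(0)]$ and $[-\mathsf{b}_{\mu_+}'(1),-\mathsf{b}_{\mu_-}'(1)]$ (and the analogous ones for $\mathsf{B}_{\mu}$) are exactly the images of the slope gaps at these corners.

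For part (1), I would first note that if $\alpha$ lies in the stated range and avoids the two excluded intervals, then by strict concavity there is a unique $q_{0}\in\mathbb{R}\setminus\{0,1\}$ with $\mathsf{b}_{\mu}$ differentiable at $q_{0}$ and $-\mathsf{b}'_{\mu}(q_{0})=\alpha$, so that $\mathsf{b}_{\mu}^{*}(\alpha)=q_{0}\alpha+\mathsf{b}_{\mu}(q_{0})$. The upper estimate $\underline{\dim}_{MB}(E_\mu(\alpha))\le \mathsf{b}_{\mu}^{*}(\alpha)$ follows from Theorem \ref{Jcole}, and then $\dim_{H}\le \underline{\dim}_{MB}$ yields the Hausdorff upper bound. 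For the lower bound I would define a Bernoulli-type measure $\nu$ on dyadic cylinders by
$$\nu(I_{\varepsilon_{1}\cdots\varepsilon_{n}})\ =\ \frac{\mu(I_{\varepsilon_{1}\cdots\varepsilon_{n}})^{q_{0}}\,2^{-n\mathsf{b}_{\mu}(q_{0})}}{Z_{n}(q_{0})},$$
where the normalizers $Z_{n}(q_{0})$ remain bounded away from $0$ and $\infty$ along an appropriate subsequence because $q_{0}$ avoids $\{0,1\}$ and hence sits strictly on one of the two branches $\underline{\tau},\overline{\tau}$. The law of large numbers for $\nu$ then shows $\nu(E_{\mu}(\alpha))>0$, hence $\pi(E_{\mu}(\alpha))>0$, and a direct computation of the local dimensions of $\nu$ gives $\underline{\alpha}_{\nu}\ge \mathsf{b}_{\mu}^{*}(\alpha)$ $\nu$-a.e. on $E_{\mu}(\alpha)$. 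Theorem \ref{thess} and the mass distribution principle then deliver both $\underline{\dim}_{MB}(E_{\mu}(\alpha))\ge \mathsf{b}_{\mu}^{*}(\alpha)$ and $\dim_{H}(E_{\mu}(\alpha))\ge \mathsf{b}_{\mu}^{*}(\alpha)$, completing the triple equality.

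Part (2) is handled by the parallel construction with $q_{0}$ now chosen so that $\mathsf{B}_{\mu}$ is differentiable at $q_{0}$ and $-\mathsf{B}_{\mu}'(q_{0})=\alpha$. The same Gibbs-type measure $\nu$ (with $\mathsf{B}_{\mu}$ in place of $\mathsf{b}_{\mu}$) verifies the hypothesis $\mathsf{H}_{\mu}^{q_{0},\mathsf{B}_{\mu}(q_{0})}(K)>0$, so Theorem \ref{ah1} gives the equality $\underline{\dim}_{MB}(E_{\mu}(\alpha))=\overline{\dim}_{MB}(E_{\mu}(\alpha))=\mathsf{B}_{\mu}^{*}(\alpha)=\mathsf{b}_{\mu}^{*}(\alpha)$. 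Combining with $\overline{\dim}_{MB}\le \dim_{P}$ yields the lower packing bound, while the matching upper bound $\dim_{P}(E_{\mu}(\alpha))\le \mathsf{B}_{\mu}^{*}(\alpha)$ comes from Olsen's classical multifractal formalism, which for the present construction agrees with the Hewitt--Stromberg formalism since $\mathsf{B}_{\mu}=\mathsf{\Delta}_{\mu}$ here, as recorded immediately above the statement of the theorem.

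The main technical obstacle is controlling the normalizing constants $Z_{n}(q_{0})$ and the local dimensions of $\nu$ in the presence of the very inhomogeneous block structure of $\mu$ (the ratios $t_{n+1}/t_{n}\to\infty$ make the empirical frequency $N_n/n$ oscillate between $0$ and $1$). The subsequence $\liminf$ versus $\limsup$ behaviour is what forces $\mathsf{b}_{\mu}=\min\{\underline{\tau},\overline{\tau}\}$ and $\mathsf{B}_{\mu}=\max\{\underline{\tau},\overline{\tau}\}$; the payoff is that once $q_{0}$ sits strictly on a single branch (which is exactly what the exclusion of the corner intervals ensures) the partition sums concentrate along an appropriate sequence of generations and the Gibbs heuristics become rigorous. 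The corner intervals at $q\in\{0,1\}$ cannot be handled by this method, which is precisely the reason they are removed in the hypothesis.
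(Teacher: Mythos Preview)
Your proposal has a genuine gap in the construction of the auxiliary measure. With a single exponent $q_{0}$, the Gibbs measure
\[
\nu(I_{\varepsilon_{1}\cdots\varepsilon_{n}})\propto \mu(I_{\varepsilon_{1}\cdots\varepsilon_{n}})^{q_{0}}\,2^{-n\,\mathsf{b}_{\mu}(q_{0})}
\]
is, after normalization, an inhomogeneous Bernoulli measure with weights $\bigl(p^{q_{0}}/(p^{q_{0}}+(1-p)^{q_{0}}),\,\cdot\bigr)$ on the $p$-blocks and $\bigl(\hat p^{q_{0}}/(\hat p^{q_{0}}+(1-\hat p)^{q_{0}}),\,\cdot\bigr)$ on the $\hat p$-blocks. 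The strong law of large numbers then gives, $\nu$-almost everywhere,
\[
\underline{\alpha}_{\mu}(x)=\min\{-\underline{\tau}'(q_{0}),\,-\overline{\tau}'(q_{0})\},\qquad
\overline{\alpha}_{\mu}(x)=\max\{-\underline{\tau}'(q_{0}),\,-\overline{\tau}'(q_{0})\},
\]
and for $q_{0}\notin\{0,1\}$ these two numbers are different. Hence $\nu(E_{\mu}(\alpha))=0$, not $>0$, and neither Theorem \ref{thess} nor the mass distribution principle applies to $E_{\mu}(\alpha)$. The very oscillation of $N_{n}/n$ that you flag as a ``technical obstacle'' is in fact fatal to the one-parameter construction. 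The paper circumvents this by building $\nu$ from \emph{two} independent parameters $(s,\hat s)$, one for each block type, subject to the matching constraint
\[
s\log p+(1-s)\log(1-p)=\hat s\log\hat p+(1-\hat s)\log(1-\hat p)=-\alpha\log 2,
\]
which forces $\alpha_{\mu}(x)=\alpha$ $\nu$-a.e.\ and hence $\nu(E_{\mu}(\alpha))=1$. The $\nu$-local dimensions then oscillate between the entropies $H(s)$ and $H(\hat s)$, which are identified with $\underline{\tau}^{*}(\alpha)$ and $\overline{\tau}^{*}(\alpha)$ via \emph{two different} exponents $q,\hat q$; Theorem \ref{thess} yields both lower bounds simultaneously from this single measure.

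A second problem is your invocation of Theorem \ref{ah1} in part (2). That theorem requires $\mathsf{H}_{\mu}^{q_{0},\mathsf{B}_{\mu}(q_{0})}(K)>0$, but here $\mathsf{b}_{\mu}(q_{0})<\mathsf{B}_{\mu}(q_{0})$ for every $q_{0}\notin\{0,1\}$, so $\mathsf{H}_{\mu}^{q_{0},\mathsf{B}_{\mu}(q_{0})}(K)=0$ and the hypothesis fails. Relatedly, your conclusion $\mathsf{B}_{\mu}^{*}(\alpha)=\mathsf{b}_{\mu}^{*}(\alpha)$ contradicts the whole point of the example, which is precisely that $\underline{\dim}_{MB}E_{\mu}(\alpha)=\mathsf{b}_{\mu}^{*}(\alpha)<\mathsf{B}_{\mu}^{*}(\alpha)=\overline{\dim}_{MB}E_{\mu}(\alpha)$ on the admissible range.
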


\begin{figure}[h!]\centering
 \includegraphics[height=5.5cm]{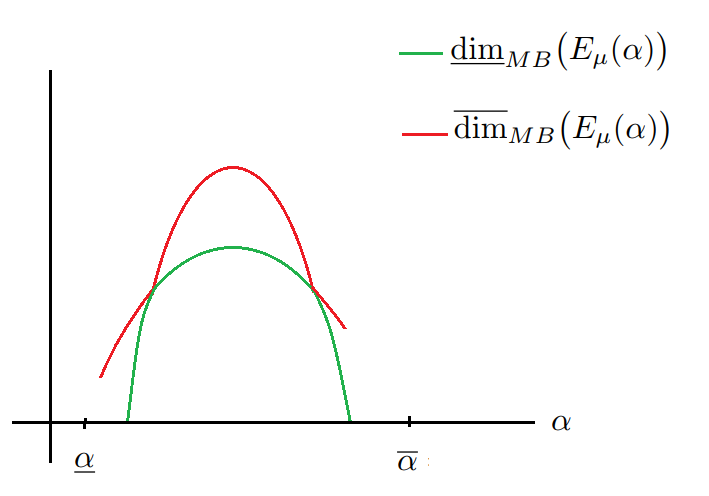}
 \caption{ The Hewitt-Stromberg dimensions of the set
 ${E}_{\mu}(\alpha)$.}\label{Fig4}
\end{figure}

\begin{proof} We can construct a new probability measure $\nu$ on the diadic
interval of the n-th generation just as $\mu$, but replacing $(p,
\hat{p})$ with $(s, \hat{s})$ such that
$$
s \log p + (1 - s)\log(1 - p) = \hat{s} \log \hat{p} + (1 - \hat{s})
\log(1 - \hat{p}),
$$

$$
\log\frac{1-p}{1 -
\hat{p}}<s\log\frac{1-p}{p}<\log\frac{1-p}{\hat{p}}
$$
and
$$
\alpha= -s \log_2 p - (1 - s) \log_2(1 - p) = s \log_2 \frac{1 - p}{
p} - \log_2(1 - p).
$$
From Lemma \ref{Lemma}, we can define the function $f$ by
\begin{eqnarray*}
f(t)&=&\limsup_{r\to 0}\frac{1}{-\log r}\log\left(\sup
\left\{\displaystyle \sum_i
\mu(B(x_i,r))^t\nu(B(x_i,r))\;\Big|\;\Big(B(x_i,r)\Big)_i\;\text{is
a packing of}\; K\right\}\right).
\end{eqnarray*}
Then, it is easy to compute
\begin{eqnarray*}
f(t) &=&\limsup_{n\to
+\infty}\frac{1}{n}\log_2\sum_{{\varepsilon_1\varepsilon_2\cdots\varepsilon_{n}}}
\mu\big(I_{\varepsilon_1\varepsilon_2\cdots\varepsilon_{n}}\big)^t
\nu\big(I_{\varepsilon_1\varepsilon_2\cdots\varepsilon_{n}}\big)\\
&=& \limsup_{n\to +\infty}\frac{1}{n}\log_2 \left(\Big(p^tr + (1 -
p)^t(1- r)\Big)^{N_n} \Big(\hat{p}^t\hat{s} + (1 - \hat{p})^t(1 -\hat{s})\Big)^{n-N_n}\right)\\
&=&\limsup_{n\to +\infty}\left(\frac{N_n}{n}\log_2 \Big(p^tr + (1 -
p)^t(1-
r)\Big)+\left(1-\frac{N_n}{n}\right)\log_2\Big(\hat{p}^t\hat{s} + (1
- \hat{p})^t(1 -\hat{s})\Big)\right)\\
&=& \log_2 \max\Big\{(p^tr + (1 - p)^t(1 - r)),\; (\hat{p}^t\hat{s}
+ (1 - \hat{p})^t(1 - \hat{s}))\Big\}.
\end{eqnarray*}
It is clear that $f(0)=0$, and the method of choosing $(s, \hat{s})$
insures that$f'(0)$ exists and is equal to $-\alpha$.

\bigskip
Now we can estimate the bounds of the dimensions of the level sets.
Given $0 < s < 1$ and define the entropy function
$$
H(s):=-s \log_2 s + (s - 1)\log_2(1 - s).
$$
The strong law of large numbers shows that
$$
\liminf_{n\to +\infty}\frac{\log_2 \nu\big(B(x, 2^{-n})\big)}{- n}=
\min\Big\{H(s),\; H(\hat{s})\Big\}
$$
and
$$
\limsup_{n\to +\infty}\frac{\log_2 \nu\big(B(x, 2^{-n})\big)}{- n}=
\max\Big\{H(s),\; H(\hat{s})\Big\}
$$
for $\nu$-almost every $x$. So it deduces from Theorem \ref{thess}
that
$$
\underline{\dim}_{MB}\big(E_\mu(\alpha)\big)\geq\min\Big\{H(s),\;
H(\hat{s})\Big\}
$$
and
$$
\overline{\dim}_{MB}\big(E_\mu(\alpha)\big)\geq\max\Big\{H(s),\;
H(\hat{s})\Big\}.
$$
To compute $H(s)$ and $H(\hat{s})$, set
$$
q=\frac{\log\frac{1-s}{s}}{\log\frac{1-p}{p}}
$$
then
\begin{eqnarray*}
\underline{\tau}'(q)&=&\frac{p^q \log_2
p+(1-p)^q\log_2(1-p)}{p^q+(1-p)^q}\\&=&\frac{ \log_2
p+\left(\frac{(1-p)}{p}\right)^q\log_2(1-p)}{1+\left(\frac{(1-p)}{p}\right)^q}\\&=&\frac{
\log_2
p+\left(\frac{(1-s)}{s}\right)\log_2(1-p)}{1+\left(\frac{(1-s)}{s}\right)}\\&=&s\log_2
p+(1-s)\log_2(1-p)\\ &=&f'(0)=-\alpha.
\end{eqnarray*}
Which implies that
\begin{eqnarray*}
\underline{\tau}(q)-q\underline{\tau}'(q)&=&\log_2\big(p^q+(1-p)^q\big) -q\underline{\tau}'(q)\\
&=&\log_2p^q~\left(1+\left(\frac{(1-p)}{p}\right)^q\right)
-q\underline{\tau}'(q)\\
&=& q \log_2p + \log_2
\left(1+\left(\frac{(1-p)}{p}\right)^q\right)-q\underline{\tau}'(q)\\
&=& q \log_2p + \log_2 \left(1+\frac{(1-s)}{s}\right)-q\underline{\tau}'(q)\\
&=&q \log_2p - \log_2s -q\Big(s\log_2 p+(1-s)\log_2(1-p)\Big)\\
&=&- \log_2s+q\Big((1-s)\log_2 p-(1-s)\log_2(1-p)\Big)\\
&=&- \log_2s -(1-s)\log_2 \frac{(1-s)}{s}\\
&=& - s\log_2s -(1-s)\log_2 (1-s)\\
&=& H(s).
\end{eqnarray*}
Also, set
$$
\hat{q}=\frac{\log\frac{1-\hat{s}}{\hat{s}}}{\log\frac{1-\hat{p}}{\hat{p}}}
$$
with the very same arguments, we have
$$
\overline{\tau}'(\hat{q})=-\alpha\quad\text{and}\quad\overline{\tau}(\hat{q})-\hat{q}\overline{\tau}'(\hat{q})=
H(\hat{s}).
$$
Thus
$$
H(s)=\underline{\tau}(q)-q\underline{\tau}'(q)=\underline{\tau}^*(-\underline{\tau}'(q))
$$
and
$$
H(\hat{s})=\overline{\tau}(\hat{q})-\hat{q}\overline{\tau}'(\hat{q})=
\overline{\tau}^*(-\overline{\tau}'(\hat{q})),
$$
which give the lower bounds of the dimensions of the level sets,
i.e.,
$$
\underline{\dim}_{MB}\big(E_\mu(\alpha)\big)\geq\min\Big\{\underline{\tau}^*(-\underline{\tau}'(q)),\;
\overline{\tau}^*(-\overline{\tau}'(\hat{q}))\Big\}
$$
and
$$
\overline{\dim}_{MB}\big(E_\mu(\alpha)\big)\geq\max\Big\{\underline{\tau}^*(-\underline{\tau}'(q)),\;
\overline{\tau}^*(-\overline{\tau}'(\hat{q}))\Big\}.
$$
But we have also the opposite inequalities:\\ \\
In order to have $\underline{\tau}(q)={\mathsf b}_{\mu}(q)$, we must
have $0 < q < 1$, which means
$$
-\underline{\tau}'(1)=h(p,p)<\alpha<h(1/2,p)=-\underline{\tau}'(0).
$$
In order to have $\overline{\tau}(\hat{q})={\mathsf
b}_{\mu}(\hat{q})$, we must have $ \hat{q}< 0 $ or $\hat{q} > 1$,
which means
$$
\alpha>h(1/2,\hat{p})=-\overline{\tau}'(0)\quad\text{or}\quad
\alpha<h(\hat{p},\hat{p})=-\overline{\tau}'(1).
$$
In order to have $\overline{\tau}(\hat{q})={\mathsf
B}_{\mu}(\hat{q})$, we must have $0 < \hat{q} < 1$, which means
$$
-\overline{\tau}'(1)=h(\hat{p},\hat{p})<\alpha<h(1/2,\hat{p})=-\overline{\tau}'(0).
$$
In order to have ${\underline{\tau}}({q})={\mathsf B}_{\mu}({q})$,
we must have $ {q}< 0 $ or ${q} > 1$, which means
$$
\alpha>h(1/2,{p})=-\underline{\tau}'(0)\quad\text{or}\quad
\alpha<h({p},{p})=-\underline{\tau}'(1).
$$
Now, put
$$
I=\Big(-\overline{\tau}'(+\infty),\;-\overline{\tau}'(-\infty)\Big)\setminus
\big[-\underline{\tau}'(0),-\overline{\tau}'(0)\big]\bigcup
\big[-\overline{\tau}'(1),-\underline{\tau}'(1)\big]
$$
and
$$
J=\Big(-\overline{\tau}'(+\infty),\;-\overline{\tau}'(-\infty)\Big)\setminus\big[-\overline{\tau}'(0),-\underline{\tau}'(0)\big]\bigcup
\big[-\underline{\tau}'(1),-\overline{\tau}'(1)\big].
$$
It is easy to verify that $I,J\subseteq (\underline{\alpha},
\;\overline{\alpha})$. Finally, it follows from Theorem \ref{Jcole}
that
$$
\underline{\dim}_{MB} \big(E_{\mu}(\alpha)\big)\leq { \mathsf
b}_{\mu}^*(\alpha)\quad\text{and}\quad \overline{\dim}_{MB}
\big(E_{\mu}(\alpha)\big)\leq {\mathsf B}_{\mu}^*(\alpha),
$$
which yields the desired result.
\end{proof}
\subsection{Example 4} For ${\mathscr X} = \{0, 1, 2, 3\}$, we consider ${\mathscr X}^* = \bigcup_{n\geq0} {\mathscr X}^n$, the set of all finite words
on the $4$-letter alphabet ${\mathscr X}$. Let $w
=\varepsilon_1\cdots \varepsilon_n$ and $v = \varepsilon_{n+1}\cdots
\varepsilon_{n+m}$, denote by $wv$ the word $\varepsilon_1\cdots
\varepsilon_{n+m}$. With this operation, ${\mathscr X}^*$ is a
monoid whose identity element is the empty word $\epsilon$. If a
word $v$ is a prefix of the word $w$, we write $v\prec w$. This
defines an order on ${\mathscr X}^*$ and endowed with this order,
${\mathscr X}^*$ becomes a tree whose root is $\epsilon$. At last,
the length of a word $w$ is denoted by $|w|$. If $w$ and $v$ are two
words, $w\wedge v$ stands for their largest common prefix. It is
well known that the function $d:=dist(w, v) = 4^{-|w\wedge v|}$
defines an ultra-metric distance on ${\mathscr X}^*$. The completion
of $({\mathscr X}^*, d)$ is a compact space which is the disjoint
union of ${\mathscr X}^*$ and $\partial{\mathscr X}^*$, whose
elements can be viewed as infinite words. Each finite word $w\in
{\mathscr X}^*$ defines a cylinder $[w]=\{x\in\partial{\mathscr
X}^*\;|\;w \wedge x\}$, which can also be viewed as a ball. Let
$a_i, b_j\in (0, 1),$ $i,j\in\{1,2,3,4\}$ satisfying
$$
\sum_{i=1}^4 a_i=\sum_{j=1}^4b_j=1
$$
and $(t_k)$ be a sequence of integers such that
$$
t_1=1,\;\; t_k<t_{k+1}\quad\text{and}\quad
\lim_{k\to+\infty}\frac{t_{k+1}}{t_{k}}=+\infty.
$$
We define the measure $\mu$ on $\partial{\mathscr X}^*$ such that
for every cylinder $[\varepsilon_1\varepsilon_2 \cdots
\varepsilon_n]$, one has
$$
\mu([\varepsilon_1\varepsilon_2 \cdots \varepsilon_n])=\prod_{j=1}^n
p_j,
$$
where
$$
\left\{
  \begin{array}{ll}
  \text{if}\;\; t_{2k-1}\leq j<t_{2k}\;\;\text{for some}\;\;k,\;
p_j=a_{\varepsilon_i+1},& \\
\\
  \text{if}\;\; t_{2k}\leq j<t_{2k+1}\;\;\text{for some}\;\;k,\;
 p_j=b_{\varepsilon_i+1}.&
  \end{array}
\right.
$$
Then
$$
{\mathsf
b}_{\mu}(q)=\inf\left\{\log_4\Big(a_1^q+a_2^q+a_3^q+a_4^q\Big),\;\log_4\Big(b_1^q+b_2^q+b_3^q+b_4^q\Big)\right\}
$$
and
$$
{\mathsf
B}_{\mu}(q)=\sup\left\{\log_4\Big(a_1^q+a_2^q+a_3^q+a_4^q\Big),\;\log_4\Big(b_1^q+b_2^q+b_3^q+b_4^q\Big)\right\}.
$$
The functions ${\mathsf b}_{\mu}$ and ${\mathsf B}_{\mu}$ are
analytic and their graphs differ except at two points where they are
tangent, with ${\mathsf b}_{\mu}(0) = {\mathsf B}_{\mu}(0)$,
${\mathsf b}_{\mu}(1) = {\mathsf B}_{\mu}(1)$, and ${\mathsf
B}_{\mu}(q) > {\mathsf b}_{\mu}(q)$ for all $q \neq 0, 1$ (see
Figure \ref{Fig3}). Moreover ${\mathsf b}_{\mu}$ and ${\mathsf
B}_{\mu}$ are convex and ${\mathsf B}_{\mu}'(\mathbb{R})$ and
${\mathsf b}_{\mu}'(\mathbb{R})$ both are intervals of positive
length (for more detail see \cite{SH1}).

Now, we suppose that $a_1 < b_1$. Then by construction of the
measure $\mu$, the graph of
$\log_4\left(a_1^q+a_2^q+a_3^q+a_4^q\right)$ is always on top of the
graph of $\log_4\left(b_1^q+b_2^q+b_3^q+b_4^q\right)$. So, we get
$$
{\mathsf b}_{\mu}(q)=\log_4\Big(b_1^q+b_2^q+b_3^q+b_4^q\Big)$$and
$${\mathsf B}_{\mu}(q)=\log_4\Big(a_1^q+a_2^q+a_3^q+a_4^q\Big).
$$
\begin{theorem} We assume that
$\alpha\in\big(-\log_4b_4,\;-\log_4b_1\big)$. Then
$$
{\dim}_{H}\big(E_\mu(\alpha)\big)=\underline{\dim}_{MB}\big(E_\mu(\alpha)\big)={\mathsf
b}_{\mu}^*(\alpha)
$$
and
$$
{\dim}_{P}\big(E_\mu(\alpha)\big)=\overline{\dim}_{MB}\big(E_\mu(\alpha)\big)={\mathsf
B}_{\mu}^*(\alpha).
$$
\end{theorem}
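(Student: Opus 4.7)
The plan is to follow the same template as the proof of Theorem \ref{th5} from Example 3, adapted to the four-letter alphabet. The key observation is that the hypothesis $a_1<b_1$ together with the restriction $\alpha\in(-\log_4 b_4,-\log_4 b_1)$ are designed precisely so that a single auxiliary measure $\nu$ simultaneously witnesses both the $\mathsf{b}_\mu^*(\alpha)$ and the $\mathsf{B}_\mu^*(\alpha)$ bounds, because on this range the ``$b$-piece'' controls $\mathsf{b}_\mu$ and the ``$a$-piece'' controls $\mathsf{B}_\mu$.

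First, for the given $\alpha$ I would locate the two distinguished parameters: $q\in\mathbb{R}$ with $-\mathsf{b}_\mu'(q)=\alpha$ (which exists and is unique since $q\mapsto\log_4(b_1^q+b_2^q+b_3^q+b_4^q)$ is strictly convex and analytic, and $\alpha$ lies in the interior of its slope range), and $\hat q\in\mathbb{R}$ with $-\mathsf{B}_\mu'(\hat q)=\alpha$ (analogously, using the $a_i$). I would then define the Gibbs-type probability vectors
$$
s_i=\frac{b_i^{q}}{\sum_{k}b_k^{q}}\qquad\text{and}\qquad \hat s_i=\frac{a_i^{\hat q}}{\sum_{k}a_k^{\hat q}},\qquad i=1,\dots,4,
$$
and build $\nu$ on $\partial\mathscr{X}^*$ exactly as $\mu$ was built, but using $(s_i)$ in blocks $t_{2k-1}\le j<t_{2k}$ and $(\hat s_i)$ in blocks $t_{2k}\le j<t_{2k+1}$. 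Using Lemma \ref{Lemma} as in the proof of Theorem \ref{th5}, the function
$$
f(t)=\log_4\max\Bigl\{\textstyle\sum_i b_i^{t}s_i,\ \sum_i a_i^{t}\hat s_i\Bigr\}
$$
arises as the upper $\zeta$-dimension, and the choices of $s,\hat s$ force $f(0)=0$ and $f'(0)=-\alpha$.

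Next, by the Borel--Cantelli/strong law argument applied along the long alternating blocks determined by $(t_k)$ (which make $N_n/n$ oscillate between $0$ and $1$), I would compute, for $\nu$-a.e.\ $x$,
$$
\liminf_{n\to+\infty}\frac{\log\nu(B(x,4^{-n}))}{-n\log 4}=\min\{H(s),H(\hat s)\},\quad \limsup_{n\to+\infty}\frac{\log\nu(B(x,4^{-n}))}{-n\log 4}=\max\{H(s),H(\hat s)\},
$$
where $H$ is the entropy of the corresponding probability vector. A direct Legendre-transform computation, mimicking the $s\,\log p+(1-s)\log(1-p)$ manipulation in Theorem \ref{th5}, gives $H(s)=\mathsf{b}_\mu(q)-q\mathsf{b}_\mu'(q)=\mathsf{b}_\mu^*(\alpha)$ and $H(\hat s)=\mathsf{B}_\mu(\hat q)-\hat q\mathsf{B}_\mu'(\hat q)=\mathsf{B}_\mu^*(\alpha)$. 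Since $a_1<b_1$ and $\alpha\in(-\log_4 b_4,-\log_4 b_1)$, the assignment $\mathsf{b}_\mu=$ ``$b$-function'' and $\mathsf{B}_\mu=$ ``$a$-function'' is consistent on this interval, so $\min=\mathsf{b}_\mu^*(\alpha)$ and $\max=\mathsf{B}_\mu^*(\alpha)$.

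Having these local dimensions in hand, I would apply Theorem \ref{thess} to $\nu$ (after verifying $\pi(E_\mu(\alpha))>0$ by Proposition \ref{P1} in the $\zeta=(\mu,\nu)$ formalism, which is immediate from $f(0)=0$) to obtain
$$
\underline{\dim}_{MB}(E_\mu(\alpha))\ge\mathsf{b}_\mu^*(\alpha)\quad\text{and}\quad \overline{\dim}_{MB}(E_\mu(\alpha))\ge\mathsf{B}_\mu^*(\alpha),
$$
while the matching upper bounds follow from Theorem \ref{Jcole}, because the chosen $\alpha$ lies in $(\underline\alpha,\overline\alpha)$. Finally, $\dim_H(E_\mu(\alpha))=\underline{\dim}_{MB}(E_\mu(\alpha))$ and $\dim_P(E_\mu(\alpha))=\overline{\dim}_{MB}(E_\mu(\alpha))$ are obtained by the mass distribution principle applied to $\nu$ together with the sandwich inequalities recalled in Section \ref{sec2.2}. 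The main obstacle, and the computational heart of the argument, is the simultaneous case analysis verifying that on the full interval $(-\log_4 b_4,-\log_4 b_1)$ one really has $\min\{H(s),H(\hat s)\}=\mathsf{b}_\mu^*(\alpha)$ and $\max\{H(s),H(\hat s)\}=\mathsf{B}_\mu^*(\alpha)$; this is the analogue of the four-case interval dissection at the end of the proof of Theorem \ref{th5}, and the hypothesis $a_1<b_1$ is used precisely to avoid the exceptional slopes where the identification would fail.
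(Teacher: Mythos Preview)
Your approach is exactly the one the paper intends; its own proof simply says the argument is ``very similar to the one of Theorem~\ref{th5}'' and defers to \cite{SH1} for the Hausdorff and packing dimension statements. There is, however, a bookkeeping slip in your construction of $\nu$. In Example~4 the measure $\mu$ uses the vector $(a_i)$ on blocks $t_{2k-1}\le j<t_{2k}$ and $(b_i)$ on blocks $t_{2k}\le j<t_{2k+1}$. For the sums $\sum_\sigma \mu([\sigma])^t\nu([\sigma])$ to factor as powers of $\sum_i a_i^{t}\hat s_i$ and $\sum_i b_i^{t} s_i$ --- which is what you wrote for $f(t)$ and what is needed so that both branches have derivative $-\alpha\log 4$ at $t=0$ --- you must place the $a$-Gibbs vector $(\hat s_i)$ in the $a$-blocks and the $b$-Gibbs vector $(s_i)$ in the $b$-blocks, i.e., the opposite of what you stated. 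With your stated assignment the actual factors would be $\sum_i a_i^{t} s_i$ and $\sum_i b_i^{t}\hat s_i$, whose derivatives at $t=0$ are $\sum_i s_i\log a_i$ and $\sum_i\hat s_i\log b_i$; these mix the $a$'s with the $b$-Gibbs weights and vice versa, so they need not coincide, $f'(0)$ fails to exist, and the whole chain (Proposition~\ref{P1}, then Theorem~\ref{thess}) collapses. Once the block assignment is swapped, everything proceeds exactly as you describe.
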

\begin{proof}
The proof is very similar to the one of Theorem \ref{th5}. We can
see also \cite{SH1} for the estimates of the Hausdorff and packing
dimensions of the set $E_\mu(\alpha)$.
\end{proof}
\begin{remark}  Here $\mathscr{K}(\mathbb{R}^n)$ denotes
the family of non-empty compact subsets of $\mathbb{R}^n$ equipped
with the Hausdorff metric, and $\mathscr{P}(\mathbb{R}^n)$ denotes
the family of Radon measures on $\mathbb{R}^n$ equipped with the
weak topology. The study of the descriptive set-theoretic complexity
of the maps
\begin{eqnarray*}
\mathscr{K}(\mathbb{R}^n)&\times& \mathscr{P}(\mathbb{R}^n)\times
\mathbb{R} \longrightarrow [-\infty,\; +\infty]\;:\; (K, \mu,
q)\mapsto \mathsf{H}_\mu^{q,t}(K),\\\\
\mathscr{K}(\mathbb{R}^n)&\times& \mathscr{P}(\mathbb{R}^n)\times
\mathbb{R} \longrightarrow [-\infty,\; +\infty]\;:\; (K, \mu,
q)\mapsto \mathsf{P}_\mu^{q,t}(K),\\\\
\mathscr{K}(\mathbb{R}^n)&\times& \mathscr{P}(\mathbb{R}^n)\times
\mathbb{R} \longrightarrow [-\infty,\; +\infty]\;:\; (K, \mu,
q)\mapsto \mathsf{b}_\mu^{q}(K),\\\\
\mathscr{K}(\mathbb{R}^n)&\times& \mathscr{P}(\mathbb{R}^n)\times
\mathbb{R} \longrightarrow [-\infty,\; +\infty]\;:\; (K, \mu,
q)\mapsto \mathsf{B}_\mu^{q}(K),\\\\
\mathscr{K}(\mathbb{R}^n)&\times& \mathscr{P}(\mathbb{R}^n)\times
\mathbb{R} \longrightarrow [-\infty,\; +\infty]\;:\; (K, \mu,
q)\mapsto \mathsf{\Delta}_\mu^{q}(K)
\end{eqnarray*}
and the multifractal structure of product measures and dimensions
(note that Edgar and Zindulka in \cite{Ed,Zi} studied the structure
of the Hewitt-Stromberg measures and dimensions on cartesian
products in the case $q=0$) will be achieved in further works.
\end{remark}
                                                                \section*{Acknowledgments}

\noindent The author would like to thank Professors {\it Fathi Ben
Nasr} and {\it Imen Bhouri} for highlighting the problem of the
refined multifractal analysis. The author is greatly indebted to
{\it Zhihui Yuan} for carefully reading the first version of this
paper and giving elaborate comments and valuable suggestions so that
the presentation can be greatly improved, especially for producing
some above figures. The author would like also to thank Professor
{\it Jinjun Li} for pointing out that the upper
(fractal/multifractal) Hewitt-Stromberg function may not be a metric
outer measure, and  Professor {\it Lars Olsen} for the
counterexample. And, the author gratefully thanks the referees for
their careful reading and valuable suggestions. This work was
supported by Analysis, Probability $\&$ Fractals Laboratory (No:
LR18ES17).


\end{document}